\def\Z{{\mathbb Z}}
\def\Aut{\textup{Aut}}
\def\span{\textup{Span}}
\def\1sat{$1$-saturated}
\def\isat{$i$-saturated}
\newtheorem{Thm}{Theorem}[section]
\newtheorem{Lemma}[Thm]{Lemma}
\newtheorem{Cor}[Thm]{Corollary}
\newtheorem{Prop}[Thm]{Proposition}
\newtheorem{Conj}{Conjecture}
\newtheorem{Question}{Question}
\theoremstyle{definition}
\newtheorem{Def}[Thm]{Definition}
\newtheorem{Eg}{Example}
\theoremstyle{remark}
\newtheorem*{remark}{Remark}
\newcommand{\defref}[1]{\hyperref[#1]{Definition~\ref*{#1}}}
\newcommand{\factref}[1]{\hyperref[#1]{Fact~\ref*{#1}}}
\newcommand{\conjref}[1]{\hyperref[#1]{Conjecture~\ref*{#1}}}
\newcommand{\thmref}[1]{\hyperref[#1]{Theorem~\ref*{#1}}}
\newcommand{\lemmaref}[1]{\hyperref[#1]{Lemma~\ref*{#1}}}
\newcommand{\corref}[1]{\hyperref[#1]{Corollary~\ref*{#1}}}
\newcommand{\propref}[1]{\hyperref[#1]{Proposition~\ref*{#1}}}
\newcommand{\secref}[1]{\hyperref[#1]{Section~\ref*{#1}}}
\newcommand{\refref}[1]{\hyperref[#1]{[\ref*{#1}]}}
\newcommand{\egref}[1]{\hyperref[#1]{Example~\ref*{#1}}}
\newcommand{\problemref}[1]{\hyperref[#1]{Problem~(\ref*{#1})}}
\newcommand{\figref}[1]{\hyperref[#1]{Figure~\ref*{#1}}}
\author[Aaron Potechin and Joseph Tsang]{
Aaron Potechin\affiliationmark{1}\thanks{Supported by NSF grant CCF: 2008920}
\and Hing Yin Tsang\thanks{Work done when the author was at the University of Chicago}
}
\title[On induced subgraphs of $H(n,3)$ with maximum degree $1$]{On induced subgraphs of $H(n,3)$ with maximum degree $1$}
\affiliation{The University of Chicago, Chicago, USA} 
\keywords{Combinatorics}
\begin{document}
\publicationdata{vol. 28:2}{2026}{5}{10.46298/dmtcs.15440}{2025-03-31; 2025-03-31; 2025-12-09}{2025-12-28}
\maketitle

\begin{abstract}
    In this paper, we consider induced subgraphs of the Hamming graph $H(n,3)$. We show that if $U \subseteq \mathbb{Z}_3^n$ and $U$ induces a subgraph of $H(n,3)$ with maximum degree at most $1$ then
    \begin{enumerate}
    \item If $U$ is disjoint from a maximum size independent set of $H(n,3)$ then $|U| \leq 3^{n-1}+1$. Moreover, all such $U$ with size $3^{n-1}+1$ are isomorphic to each other.
    \item For $n \geq 6$, there exists such a $U$ with size $|U| = 3^{n-1}+18$ and this is optimal for $n = 6$.
    \item If $U \cap \{x, x+e_1, x+2e_1\} \ne \phi$ for all $x \in \mathbb{Z}
    _3^n$ then $|U| \le 3^{n-1} +81$.
    \end{enumerate}
\end{abstract}

\section{Introduction}

The Sensitivity Theorem \cite{NS94, Huang19} asserts that the sensitivity of a Boolean function is polynomially related to its decision tree complexity. The theorem was first conjectured by Nisan and Szegedy in the 90s \cite{NS94} and it was settled in 2019 by Huang \cite{Huang19} using a beautiful spectral argument. Huang's theorem shows that any induced subgraph on more than half of the vertices of a Boolean hypercube has maximum degree at least $\sqrt{n}$. This bound is tight \cite{CFGS88} and it is known to imply the Sensitivity Theorem \cite{GL92}.

Since then there has been a considerable amount of work trying to extend Huang's result. Huang's argument has been generalized to Cartesian products of directed $l$-cycles \cite{Tika22} and Cartesian products of paths \cite{ZH23}, as well as other products of graphs \cite{HLL20}. Alon and Zheng \cite{AZ20} considered arbitrary Cayley graphs over $\Z_2^n$ and showed that Huang's theorem implies any induced subgraph on more than half of the vertices must have maximum degree $\sqrt{d}$ where $d$ is the degree of the Cayley graph. Potechin and Tsang \cite{PT20} showed that this can be generalized to any abelian Cayley graph. Recently, Ansensio, Garc\'{i}a-Marco, and Knauer \cite{AGK24} conjectured that the Sensitivity Theorem can be generalized to $m$-ary functions.

For graphs which do not contain induced subcubes, Frankl and Kupavskii \cite{FK20}, and Chau et al. \cite{CEFL23} studied Kneser graphs and proved a relationship between the size and maximum degree of the induced subgraphs that is closely related to Huang's result on the Boolean hypercube. García-Marco and Knauer \cite{GK22} showed that analogues of Huang's result hold for several families of non-abelian Cayley graphs including the point-line incidence graph of the projective plane of order $q$, Kronecker double covers of the Ramanujan graphs of Lubotzky, Phillips, and Sarnak \cite{LPS88}, and random Cayley graphs with a sufficiently large number of generators.

On the other hand, it was shown that a number of graphs have low degree induced subgraphs on more than half of the vertices. Bishnoi and Das \cite{BD20} observed that the odd graphs with vertices $\{V \subseteq [2n+1]: |V| = n\}$ and edges $\{\{U,V\}: U \cap V = \phi\}$ contain 1-regular induced subgraphs on more than half of the vertices. This gives a family of vertex-transitive graphs of unbounded degree that contains large 1-regular induced subgraphs. Lehner and Verret \cite{LV20} constructed three families of graphs that contain 1-regular induced subgraphs on more than half of the vertices, including the odd graphs and families of non-abelian Cayley graphs with constant and unbounded degrees, refuting a conjecture of Potechin and Tsang \cite{PT20}. García-Marco and Knauer \cite{GK22} constructed further examples of infinite families of Cayley graphs that contain 1-regular induced subgraphs on more than half of the vertices.

Dong \cite{Dong21} and Tanday \cite{Tand22} studied the maximum degree of induced subgraphs of Hamming graphs $H(n, k)$. Given a graph $G$, let $\alpha(G)$ denote the maximum size of an independent set of $G$. Dong \cite{Dong21} constructed a bipartite induced subgraph on more than $\alpha(H(n,k))$ vertices with maximum degree at most $\lceil \sqrt{n} \rceil$. For $k \geq 3$, this result was strengthened by \cite{Tand22} who gave a construction of an induced subgraph on $\alpha(H(n,k))+1$ vertices with maximum degree 1. \cite{GK22} independently observed that $H(n, 3)$ has a 1-regular induced subgraph on $3^{n-1}+1$ vertices.

In this paper, we further investigate the Hamming graph $H(n,3)$ (i.e., the graph with vertices $\Z_3^{n}$ and edges $\{\{x, y\} \in \Z_3^n \times \Z_3^n : d_H(x, y) = 1\}$ where $d_H$ is the Hamming distance). In particular, we investigate the following question:
\begin{Question} 
    Let $U \subseteq \Z_3^n$. If the induced subgraph of $H(n,3)$ on $U$ has maximum degree at most 1, how large can $U$ be?
\end{Question}
\begin{remark}
It is not hard to show that if $G$ is a $d$-regular graph on $N$ vertices than any induced subgraph of $G$ with maximum degree $k$ has at most $\frac{dN}{2d-k}$ vertices. While this bound is tight for some graphs, for $H(n,3)$ this bound is worse than the upper bound of $\frac{3^{n}-1}{2}$ which can be obtained from Huang's result \cite{PT20}. For this problem, we are not aware of better bounds.
\end{remark}
\subsection{Our Results}
We know that $|U|$ can be at least $3^{n-1}+1$ according to \cite{GK22, Tand22}. Our first result shows that this is the largest possible under an extra assumption on $U$.

\begin{Thm}
    Let $U \subseteq \Z_3^n$. If $U$ is disjoint from a maximum size independent set of $H(n,3)$ and the induced subgraph of $H(n,3)$ on $U$ has maximum degree at most $1$ then $|U| \le 3^{n-1}+1$.
\end{Thm}

However, without the assumption of being disjoint from a maximum size independent set, $U$ can be somewhat larger. We construct examples of size $3^{n-1}+K$ where $K = 2, 6, 18$ for $n = 4, 5, 6$ respectively. Our example for $n = 6$ can be extended to give an example of size $3^{n-1}+18$ for any $n \geq 6$.

\begin{Thm}
    For $n \ge 6$, there exists $U \subseteq \Z_3^n$ such that $|U| = 3^{n-1}+18$ and $U$ induces a subgraph of $H(n,3)$ with maximum degree 1.
\end{Thm}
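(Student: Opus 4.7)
The plan is to first exhibit an explicit $U_6 \subseteq \Z_3^6$ of size $261$ with maximum degree $1$, and then lift this construction to $\Z_3^n$ for all $n \geq 7$.

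For the base case $n = 6$, I would look for $U_6$ of the form $(I_6 \setminus S) \cup E$, where $I_6 = \{y \in \Z_3^6 : \sum_i y_i \equiv 0 \pmod{3}\}$ is the standard maximum independent set of size $243$, $S \subseteq I_6$, and $E \subseteq \Z_3^6 \setminus I_6$. The max-degree-$1$ condition together with the target size $261$ naturally forces $|E| - |S| = 18$, with $E$ being a union of $18$ disjoint edges of $H(6,3)$ (so $|E| = 36$) and each $e \in E$ having all six of its $I_6$-neighbors in $S$. A double-count then forces $|S| = 18$ and each $s \in S$ to have all twelve of its $H(6,3)$-neighbors in $E$. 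Since $18 = 2 \cdot 3^2$, I expect a highly symmetric construction: $S$ and $E$ built as unions of $\Z_3^2$-translates of a small base configuration inside a well-chosen $2$-dimensional affine subspace of $\Z_3^6$. Once the incidence structure is in place, max-degree-$1$ is immediate: $I_6 \setminus S$ is independent, no edges cross from $E$ to $I_6 \setminus S$, and the only remaining edges are the $18$ inside $E$.

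For the extension to $n > 6$, write $\Z_3^n = \Z_3^{n-6} \times \Z_3^6$ and let $W = \{0\} \times \Z_3^6$. The naive lift $U_n = \{(0, y) : y \in U_6\} \cup (I_n \setminus W)$ has exactly the right size, $|U_n| = 261 + (3^{n-1} - 243) = 3^{n-1} + 18$. However, each lifted $E$-vertex $(0, e)$ acquires $n - 6$ new neighbors in $I_n \setminus W$, namely $(-s_6(e)\, e_i, e)$ for $i = 1, \ldots, n-6$, raising its degree to $1 + (n-6)$. To repair this while preserving the size, I plan to redistribute the $E$-portion across multiple first-coordinate slices via the coset structure $\{I_{n-6}^{(c)}\}_{c \in \Z_3}$, placing both endpoints of each edge of $E$ in a common slice chosen so that its problematic $I_n$-neighbors are absorbed into an enlarged removal set $\tilde S$, balanced by compensating additions to the lifted $\tilde E$. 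The $\Z_3^2$-symmetry from Step~1 should guide a coherent choice of these redistributions.

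The main obstacle is the simultaneous bookkeeping for $n \geq 7$: every enlargement of $\tilde S$ must be matched by a corresponding enlargement of $\tilde E$ to keep $|\tilde E| - |\tilde S| = 18$, and the new $\tilde E$-elements introduce further $I_n$-neighbors that must also be canceled, potentially triggering a cascade. I expect the cascade to close up thanks to the algebraic structure inherited from the base configuration, producing a lift in which the local perturbations in each $\Z_3^{n-6}$-slice fit together consistently. The tightness of the $+18$ bound at $n = 6$ (established elsewhere in the theorem) suggests that only very specific choices of $(S, E)$ admit such a lift, so verifying that one such construction achieves both the exact size $3^{n-1} + 18$ and max-degree-$1$ is the delicate technical heart of the argument.
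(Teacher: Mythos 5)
Your base-case ansatz is provably impossible, so the construction you plan to search for does not exist. Suppose $U_6=(I_6\setminus S)\cup E$ with $E$ a union of $18$ disjoint edges ($|E|=36$, $|S|=18$), every $e\in E$ having all six of its $I_6$-neighbours in $S$, and every $s\in S$ having all twelve of its neighbours in $E$ — exactly the incidence structure your double count forces. Fix $s\in S$ and a direction $i$. Both $s+e_i$ and $s+2e_i$ lie in $E$ and are adjacent to each other, so by the degree-$1$ condition $\{s+e_i,s+2e_i\}$ must be one of the $18$ matching edges of $E$. This gives $6$ vertex-disjoint matching edges per $s$; moreover, an edge $\{u,v\}$ with $u,v\notin I_6$ and $v=u+e_i$ lies on a unique line whose unique $I_6$-point is $s$, so the $6$ edges attached to distinct $s\in S$ are pairwise distinct. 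Hence the matching would need $18\cdot 6=108$ edges, while it has only $18$ — a contradiction. (Your double count is also slightly overstated: it only forces $|E|\ge 36$, with the rigid incidence structure arising exactly in the case $|E|=36$, which is the case just ruled out.) The true extremal set in $\Z_3^6$ is not of the form "independent set minus $S$ plus a matching"; it is a $1$-saturated set, best described by a function $U_f:\Z_3^5\to\{A,B,C,X,Y,Z\}$ recording which one or two points of each line in direction $e_1$ are present, with $18$ lines carrying two points.

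Separately, your lifting step is not a proof: you correctly identify that the naive product lift raises degrees and then defer the repair to an unspecified "cascade" that you hope "closes up." The paper avoids this entirely by lifting one coordinate at a time through the block description: each value $A,B,C$ of $U_f$ on $\Z_3^{n-1}$ is replaced by a cyclically shifted triple (e.g.\ $A\mapsto(A,C,B)$) and each value $X,Y,Z$ by $(X,A,A)$, $(Y,B,B)$, $(Z,C,C)$ on $\Z_3^{n}$; this manifestly preserves both the induced degree bound and the count of $\{X,Y,Z\}$-blocks, i.e.\ the number of extra points. So even granting a correct base case, your argument as written does not establish the statement for $n>6$, whereas the paper's Lemma 4.2 does so in a few lines.
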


It can be shown that our examples for $n \in [6]$ are the largest possible. Interestingly, they all share a common property that there exists $i \in [n]$ such that every line along direction $i$ intersects with them. We say that the set is $i$-saturated if it satisfies this intersection property with lines along direction $i$. Our main result is an upper bound on the size of $U$ when $U$ is $i$-saturated for some $i \in [n]$.

\begin{Thm}
\label{SAT_upper_bound}
    For all $U \subseteq \Z_3^n$ and $i \in [n]$, if $U$ is $i$-saturated and $U$ induces a subgraph of $H(n,3)$ with maximum degree at most $1$ then $|U| \le 3^{n-1}+81$.
\end{Thm}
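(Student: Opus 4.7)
The plan is to slice $U$ along the saturating direction and reduce to a structural question about three subsets of $\Z_3^{n-1}$. Without loss of generality take $i = n$, and for each $c \in \Z_3$ set $U_c := \{y \in \Z_3^{n-1} : (y, c) \in U\}$. Since $U$ is $n$-saturated, $U_0 \cup U_1 \cup U_2 = \Z_3^{n-1}$. The direction-$n$ line $\{(y, 0), (y, 1), (y, 2)\}$ through any $y$ is a triangle in $H(n, 3)$, so the max-degree-$1$ hypothesis forces $U_0 \cap U_1 \cap U_2 = \emptyset$; and each $U_c$ induces a subgraph of $H(n-1,3)$ of maximum degree at most $1$, since if some $y \in U_c$ had two $H(n-1,3)$-neighbors in $U_c$ then $(y, c)$ would have two horizontal $U$-neighbors. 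Defining $V_j := U_j \setminus (U_k \cup U_l)$ and $A_j := U_k \cap U_l$ for $\{j,k,l\}=\{0,1,2\}$, inclusion-exclusion yields $|U| = 3^{n-1} + |A|$ where $A := A_0 \cup A_1 \cup A_2$, so the theorem reduces to showing $|A| \le 729$.

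I would next derive the key structural constraints on $A$. For $y \in A_j$, the two $U$-points on the direction-$n$ line through $y$ realize each other's unique $U$-neighbor, so no $H(n,3)$-horizontal neighbor of either vertex can lie in $U$. Translating back to $\Z_3^{n-1}$, every $H(n-1, 3)$-neighbor of $y$ lies in $V_j$, and the $n-1$ direction-wise pairs $\{y + e_p, y + 2e_p\}$ for $p \in [n-1]$ form $n-1$ vertex-disjoint edges of the induced subgraph on $V_j$, each saturating the max-degree-$1$ budget at both endpoints. If two elements $y, y^* \in A$ were at Hamming distance $2$ in $\Z_3^{n-1}$ with a common $H(n-1,3)$-neighbor $z$, two cases would each give a contradiction: if $y, y^*$ sit in different $A_j$'s, then $z$ would have to lie in two disjoint $V_j$'s at once; and if they lie in the same $A_j$, then $z$ would have two distinct pair-partners in $V_j$, one from $y$'s perpendicular pair through $z$ and another from $y^*$'s, so $V_j$-degree at $z$ is at least $2$. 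Either way is impossible, so distinct elements of $A$ pairwise differ in at least three coordinates: $A$ is a distance-$3$ code in $\Z_3^{n-1}$.

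The main remaining step is the bound $|A| \le 3^6 = 729$. The plan is to combine the distance-$3$ property of $A$ with the rigidity of the perpendicular pair-matchings: each element of $A$ plants a full $(n-1)$-edge star inside one of $V_0, V_1, V_2$, and different stars occupy pairwise disjoint vertex sets. Extending the distance-$2$ partner-conflict argument to chains that go through common distance-$2$ points, one aims to show that if $A$ varies in more than six coordinate directions then one can assemble incompatible partner constraints across the three matchings. This would identify a set $S \subseteq [n-1]$ of at most six coordinates such that all elements of $A$ agree on $[n-1] \setminus S$, giving $|A| \le 3^{|S|} \le 729$. The main technical obstacle is precisely this localization step: the pair-matching compatibility constraints are manifestly local, and turning them into a global dimension bound on the span of $A$ requires a delicate case analysis of how the three matchings inside $V_0, V_1, V_2$ interact across shared coordinate directions.
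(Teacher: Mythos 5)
Your setup is essentially the paper's in a different notation: slicing along the saturating direction and recording, for each $y\in\Z_3^{n-1}$, which one or two points of the line lie in $U$ is exactly the representation $U_f:\Z_3^{n-1}\to\{A,B,C,X,Y,Z\}$, your identity $|U|=3^{n-1}+|A|$ is the paper's count of extra points, and your observations that every neighbor of an extra point $y\in A_j$ lies in $V_j$ and that the pairs $\{y+e_p,\,y+2e_p\}$ are forced, mutually saturating edges are correct (this is Proposition \ref{ABCXYZcharacterization} in disguise). Your deduction that $A$ has minimum Hamming distance $3$ is also correct and is a genuine, if modest, structural fact.

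The problem is that the proof stops exactly where the theorem begins. A minimum-distance-$3$ code in $\Z_3^{n-1}$ can have on the order of $3^{n-1}/n$ codewords (ternary Hamming codes), so the distance property alone gives nothing resembling a constant bound; everything rides on your ``localization step,'' and for that you offer only a plan (``one aims to show\dots'') with no argument. The target you set yourself --- that all of $A$ lies in a single coordinate box spanning at most six directions --- is moreover \emph{stronger} than what the paper proves and is not obviously true. The paper instead shows that around each extra point the function is canonical on an axis-parallel affine subset of codimension at most $6$ (Theorem \ref{large_canonical}), and separately that these affine subsets are pairwise disjoint because a canonical set determines its extra point as the common endpoint of all canonical paths (Lemma \ref{canonical_path}, Lemma \ref{disjoint}); the bound $3^6$ then follows by a volume count of disjoint codimension-$\le 6$ subsets. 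Reaching that point requires the Line Extension Lemma (Lemma \ref{line_extend}), whose proof is a substantial classification of $1$-skew functions in dimensions $2$, $3$ and $4$ (Theorem \ref{bad3}, Lemma \ref{forbidden}), together with the inductive growing argument of Lemma \ref{ind_extend}. None of this, nor any substitute for it, appears in your proposal, so the key step of the theorem is missing.
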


One lemma in our proof, Lemma \ref{SAT_line_extend}, is verified by a SAT solver. In Section \ref{sec:lineextensionlemma}, we present a proof of a weaker version of this lemma (cf. Lemma \ref{line_extend}) which is sufficient to show the weaker upper bound that all $i$-saturated subsets of $\Z_3^n$ with induced degree $1$ have size at most $3^{n-1}+729$.

\subsection{Organization}
In Section \ref{section:pre}, we describe the notation we will use for our arguments and for illustrating our constructions. In Section \ref{section:canonical}, we define canonical sets and prove that they are the unique extremal induced degree 1 subsets that are disjoint from a maximum size independent set. Canonical sets and their properties will play a crucial role in our upper bound on $1$-saturated subsets. We construct examples of large induced degree 1 subsets in Section \ref{section:lower} and prove our size upper bound on $1$-saturated induced degree 1 subsets in Section \ref{section:upper}.

\section{Preliminaries}
\label{section:pre}
\begin{Def}[Hamming graph $H(n,3)$]
The Hamming graph $H(n,3)$ is the graph with vertices $\Z_3^n$ and edges $\{\{x, y\} \in \Z_3^n \times \Z_3^n : d_H(x, y) = 1\}$ where $d_H$ is the Hamming distance.
\end{Def}
\begin{Def}[Standard basis for $\Z_3^n$]
We use the standard basis $e_1,\ldots,e_n$ for $\Z_3^n$. In other words, we take $e_i \in \Z_3^n$ to be the point which is $1$ in coordinate $i$ and $0$ in the other coordinates.
\end{Def}
\begin{Def}[All zero point]
We define $0^n$ to be the point $(0,\ldots,0) \in \Z_3^n$ whose coordinates are all $0$.
\end{Def}
\begin{Def}[Induced degree]
    We say that $U \subseteq \Z_3^n$ has \emph{induced degree} $d$ if $U$ induces a subgraph of $H(n,3)$ with maximum degree $d$.
\end{Def}
Throughout the paper, we will draw diagrams representing subsets $U \subseteq \Z_3^n$. For these diagrams, we will use the following conventions. 
\begin{enumerate}
\item Each square represents a subset of $\Z_3^k$ for some $k \geq 0$. When $k = 0$, we represent a point of $U$ by $\bullet$ and we represent the empty set $\phi$ by an empty space.
\item For each $3 \times 3$ block, the first column corresponds to $x_{k+1} = 0$, the second column corresponds to $x_{k+1} = 1$, and the third column corresponds to $x_{k+1} = 2$. Similarly, if there is more than one row, the first row corresponds to $x_{k+2} = 0$, the second column corresponds to $x_{k+2} = 1$, and the third column corresponds to $x_{k+2} = 2$.
\item When there is more than one block, each additional direction represents an additional coordinate.
\end{enumerate}
\begin{Eg}
The following diagram shows the set of points $U = \{(0,0), (0,2), (1,1), (2,1)\}$. Note that $|U| = 4$ and $U$ induces a subgraph of $H(n,3)$ where every vertex has degree $1$.
\[
\begin{tabular}{|c|c|c|}
 \hline
 $\bullet$  &   &  \\
 \hline
  & $\bullet$ & $\bullet$ \\
 \hline
 $\bullet$ &  &  \\
 \hline
\end{tabular}
\]
\end{Eg}

We now describe some basic facts about $H(n,3)$ and maximum size independent sets of $H(n,3)$.
\color{black}
\begin{Prop}
The automorphisms of the Hamming graph $H(n,3)$ are generated by the following operations.
\begin{enumerate}
    \item Permuting the coordinates.
    \item Multiplying a coordinate by $2$.
    \item Adding $x \in \Z_3^n$ to every point.
\end{enumerate}
\end{Prop}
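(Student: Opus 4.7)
Let $\phi \in \Aut(H(n,3))$. The plan is to compose $\phi$ with carefully chosen generators of types 1--3 and show that the composition is the identity.

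First, applying a translation (Operation 3) I may assume $\phi(0) = 0$. Next, I claim that every triangle in $H(n,3)$ has the form $\{x, x + e_i, x + 2e_i\}$ for some $x \in \Z_3^n$ and $i \in [n]$: if $a,b,c$ are pairwise at Hamming distance $1$ with $b - a = \alpha e_i$ and $c - b = \beta e_j$ for $i \ne j$, then $c - a = \alpha e_i + \beta e_j$ has two nonzero coordinates, so is not at Hamming distance $1$ from $a$. Call such triangles \emph{lines}; the $n$ lines through $0$ are $L_i := \{0, e_i, 2e_i\}$. Since $\phi$ permutes triangles, it permutes the $L_i$ and induces some $\sigma \in S_n$. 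Composing with the coordinate permutation $\sigma^{-1}$ (Operation 1), I may assume $\phi(L_i) = L_i$ for every $i$. Then $\phi(e_i) \in \{e_i, 2e_i\}$, and composing with Operation 2 on the coordinates where $\phi$ swaps these, I may further assume $\phi(e_i) = e_i$ and $\phi(2e_i) = 2e_i$ for all $i$.

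It remains to show that $\phi$ is now forced to be the identity, which I would prove by induction on the weight $\mathrm{wt}(v) := |\supp(v)|$. The cases $\mathrm{wt}(v) \le 1$ are already handled. For $v$ with $\mathrm{wt}(v) = \ell \ge 2$, the weight-$(\ell - 1)$ neighbors of $v$ are exactly $u_j := v - v_j e_j$ for $j \in \supp(v)$, and all are fixed by induction. A short coordinate-wise case analysis shows that the common neighbors of any two such $u_{j_1}, u_{j_2}$ are exactly $v$ and $w := v - v_{j_1} e_{j_1} - v_{j_2} e_{j_2}$ (the two ways a vertex can differ from $u_{j_1}$ in one coordinate and from $u_{j_2}$ in one coordinate match up in the pairs $(j_1, j_2)$ and $(j_2, j_1)$ and give exactly these two vertices). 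When $\ell = 2$ the alternative $w$ equals $0$, already fixed by $\phi$, so injectivity forces $\phi(v) = v$. When $\ell \ge 3$, intersecting also with the common neighbors of $u_{j_1}$ and a third $u_{j_3}$ eliminates the alternative, since $v_{j_2} e_{j_2} \ne v_{j_3} e_{j_3}$, and pins $\phi(v) = v$.

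The only real obstacle is the common-neighbor calculation in the last paragraph, which is routine but relies on the fact that each $u_j$ differs from $v$ in exactly one coordinate. A more structural alternative would be to cite Sabidussi's theorem on Cartesian products of connected prime graphs, which yields $\Aut(H(n,3)) = \Aut(K_3)^n \rtimes S_n = S_3^n \rtimes S_n$, and then to note that Operations 2 and 3 restricted to a single coordinate already generate the affine group of $\Z_3$, which is $\Aut(K_3) = S_3$. I would prefer the direct induction since it is short, self-contained, and reuses the line-structure facts about $H(n,3)$ that are used elsewhere in the paper.
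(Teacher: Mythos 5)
The paper states this proposition without proof, treating it as a standard fact (it is the classical description of $\Aut(H(n,3))$, e.g.\ via Sabidussi's theorem on Cartesian products, as you note), so there is no in-paper argument to compare against. Your direct proof is correct and self-contained. The key facts all check out: every triangle of $H(n,3)$ is a line $\{x,x+e_i,x+2e_i\}$ (your two-nonzero-coordinates argument is right); after normalizing so that $\phi$ fixes $0$ and each $e_i$ and $2e_i$, the weight induction works because the weight-$(\ell-1)$ neighbors of $v$ are exactly the $u_j = v - v_j e_j$ for $j \in \supp(v)$, and the common neighborhood of $u_{j_1}$ and $u_{j_2}$ (which are at distance $2$, differing exactly in coordinates $j_1,j_2$) is precisely $\{v,\, v - v_{j_1}e_{j_1} - v_{j_2}e_{j_2}\}$; the second point is $0$ when $\ell = 2$ (ruled out by injectivity) and is eliminated by a third index when $\ell \ge 3$. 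What your approach buys is a proof that reuses only the line structure of $H(n,3)$ already central to the paper, at the cost of a page of routine verification that the authors chose to omit; citing the prime-factorization theorem for Cartesian products would be shorter but imports external machinery. One cosmetic remark: for completeness you should also record the (trivial) converse direction, namely that each of the three listed operations preserves Hamming distance and hence is an automorphism.
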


\begin{proof}
We claim that the automorphisms of $H(n, 3)$ are precisely the maps of the form
\[
\sigma(x_1,\ldots,x_n) = ({c_1}x_{\pi^{-1}(1)} + y_1,\ldots,{c_n}x_{\pi^{-1}(n)} + y_n)
\]
where $y = (y_1,\ldots,y_n) \in \Z_3^n$, $\pi \in S_n$, and $c_1,\ldots,c_n \in \{1,2\}$. Note that these maps can be generated using the three operations above as follows:
\begin{enumerate}
\item Apply the permutation $\pi \in S_n$ to the coordinates.
\item For each $i \in [n]$, multiply coordinate $i$ by $c_i \in \{1,2\}$.
\item Add $y = (y_1,\ldots,y_n) \in \Z_3^n$ to each point.
\end{enumerate}
To see why any automorphism $\sigma \in Aut(H(n, 3))$ must be of this form, consider an arbitrary $\sigma \in Aut(H(n, 3))$ and observe the following:
\begin{enumerate}
\item $\sigma(0^n) \in \Z_3^n$ so we can take $y = \sigma(0^n)$.
\item For each $i \in [n]$, $\sigma(0^n)$ and $\sigma(e_i)$ have Hamming distance $1$ so $\sigma(e_i) = y + ce_{j}$ for some $j \in [n]$ and $c \in \{1,2\}$. Thus, we can set $\pi(i) = j$ and set $c_j = c$.
\item For each pair of distinct indices $i,i' \in [n]$, $\sigma(e_i)$ and $\sigma(e_{i'})$ have Hamming distance $2$ so we cannot have that $\pi(i') = \pi(i)$. This implies that for each $j \in [n]$, there is exactly one $i \in [n]$ such that $\pi(i) = j$ so $\pi$ will be a permutation of $[n]$ and the coefficients $c_1,\ldots,c_n \in \{1,2\}$ are well-defined as for each $j \in [n]$, $c_j$ is set exactly once.
\end{enumerate}
We claim that for all $x = (x_1,\ldots,x_n) \in \Z_3^n$, $\sigma(x) = y + \sum_{i=1}^{n}{{c_{\pi(i)}}{x_i}e_{\pi(i)}}$. We show this by induction on the number of nonzero coordinates of $x$. When $x = 0^{n}$, $\sigma(x) = \sigma(0^n) = y$. When $x = e_i$ for some $i \in [n]$, $\sigma(x) = \sigma(e_i) = y + c_{\pi(i)}e_{\pi(i)}$. When $x = 2e_i$ for some $i \in [n]$, since the Hamming distance between $\sigma(2e_i)$ and $\sigma(0^n)$ is $1$ and the Hamming distance between $\sigma(2e_i)$ and $\sigma(e_i)$ is $1$, we must have that $\sigma(x) = \sigma(2e_i) = y + 2c_{\pi(i)}e_{\pi(i)}$. Thus, in both cases we have that $\sigma(x) = y + \sum_{i=1}^{n}{{c_{\pi(i)}}{x_i}e_{\pi(i)}}$.

For the inductive step, assume that $\sigma(x) = y + \sum_{i=1}^{n}{{c_{\pi(i)}}{x_i}e_{\pi(i)}}$ for all $x \in \Z_3^n$ such that $x$ has at most $k-1$ nonzero coordinates for some $k \geq 2$ and consider an $x \in \Z_3^n$ with $k$ nonzero coordinates. Observe that for all $j \in [n]$ such that $x_j \neq 0$, 
\begin{enumerate}
\item By the inductive hypothesis, $\sigma(x_1,\ldots,x_{j-1},0,x_{j+1},\ldots,x_n) = y + \sum_{i \in [n] \setminus \{j\}}{{c_{\pi(i)}}{x_i}e_{\pi(i)}}$.
\item $\sigma(x)$ and $\sigma(x_1,\ldots,x_{j-1},0,x_{j+1},\ldots,x_n)$ have Hamming distance $1$.
\end{enumerate}
Since $k \geq 2$, this implies that $\sigma(x) = y + \sum_{i=1}^{n}{{c_{\pi(i)}}{x_i}e_{\pi(i)}}$, as needed.
\end{proof}

\begin{remark}
    When we say two subsets of $\Z_3^n$ are isomorphic, we mean that there is an automorphism of the Hamming graph $H(n,3)$ mapping one subset to the other.
\end{remark}

\begin{Def}
Given a graph $G$, let $\alpha(G)$ denote the maximum size of an independent set of $G$. 
\end{Def}
For the Hamming graph $H(n,3)$, we have that $\alpha(H(n,3)) = 3^{n-1}$ and the maximum size independent sets of $H(n,3)$ are hyperplanes of the vector space $\Z_3^n$.

\begin{Def}
Let $n \geq 2$ be a natural number. Given a set of vertices $S \subseteq \Z_{3}^{n-1}$, for each $c \in \Z_3^n$, we define $(S,c) = S \times \{c\} \subseteq \Z_3^n$ to be the set of points 
\[
(S,c) = \{(x_1,\ldots,x_n) \in \Z_3^n: (x_1,\ldots,x_{n-1}) \in S, x_n = c\}
\]
\end{Def}
\begin{Prop}
\label{coset_char}
$I$ is a maximum size independent set of $H(n,3)$ if and only if there exist $b \in \{1, 2\}^n$ and $c \in \Z_3$ such that $b_1 = 1$ and $I = \{x \in \Z_3^n : \sum_{i = 1}^{n}{b_i{x_i}} \equiv c \mod 3\}$. Moreover, for each $I$ of this form, the only independent sets of $H(n,3)$ of size $3^{n-1}$ which are disjoint from $I$ are the independent sets $I' = \{x \in \Z_3^n : \sum_{i = 1}^{n}{b_i{x_i}} \equiv c+1 \mod 3\}$ and $I'' = \{x \in \Z_3^n : \sum_{i = 1}^{n}{b_i{x_i}} \equiv c+2 \mod 3\}$.
\end{Prop}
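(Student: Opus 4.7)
The plan is to prove both assertions by induction on $n$, handling the forward (``if'') direction first. If $I = \{x \in \Z_3^n : \sum_{i=1}^n b_i x_i \equiv c \mod 3\}$ with $b \in \{1,2\}^n$, then any two neighbors in $H(n,3)$ differ in exactly one coordinate, which changes $\sum b_i x_i$ by $\pm b_i \not\equiv 0 \mod 3$; hence $I$ is independent, and as a coset of a hyperplane it has size $3^{n-1}$. For the ``only if'' direction, the base case $n = 1$ is trivial since $H(1,3) = K_3$ and every singleton in $\Z_3$ is of the stated form.

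For the inductive step, I would slice a maximum independent set $I \subseteq \Z_3^n$ by its last coordinate, defining $I_c = \{x' \in \Z_3^{n-1} : (x', c) \in I\}$ for each $c \in \Z_3$. Each $I_c$ is independent in $H(n-1,3)$, and $I_c \cap I_{c'} = \phi$ for $c \ne c'$ (otherwise $(x', c)$ and $(x', c')$ would be adjacent in $I$). Hence $3^{n-1} = |I| = \sum_c |I_c| \le 3 \cdot 3^{n-2}$, forcing $|I_c| = 3^{n-2}$ for every $c$, and the $I_c$ partition $\Z_3^{n-1}$. By induction each $I_c$ is a coset of a hyperplane of $\Z_3^{n-1}$ of the stated form. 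The key combinatorial lemma is that three disjoint hyperplane cosets partitioning $\Z_3^{n-1}$ must be cosets of a common hyperplane: if $I_0$ were a coset of $H$ and $I_1$ a coset of $H' \ne H$, then $|H \cap H'| = 3^{n-3}$, and $I_1$ would decompose as the disjoint union of its intersections with the three cosets of $H$, each of size $0$ or $3^{n-3}$; disjointness from $I_0$ would then give $|I_1| \le 2 \cdot 3^{n-3} < 3^{n-2}$, a contradiction. (The $n = 2$ case is trivial since $\Z_3$ has a unique hyperplane.) All three $I_c$ thus share a common normalized direction $b \in \{1,2\}^{n-1}$ with $b_1 = 1$, and the assignment $c \mapsto c^{(c)}$ describing $I_c = \{x' : \sum_{i=1}^{n-1} b_i x'_i \equiv c^{(c)} \mod 3\}$ is a bijection of $\Z_3$; since every bijection of $\Z_3$ is affine, $c^{(c)} = c_0 - b_n c$ for some $b_n \in \{1,2\}$ and $c_0 \in \Z_3$, and recombining yields $I = \{x : \sum_{i=1}^n b_i x_i \equiv c_0 \mod 3\}$.

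For the uniqueness statement, I would argue that given two maximum independent sets $I = \{x : b \cdot x \equiv c\}$ and $J = \{x : b' \cdot x \equiv c'\}$ in the stated normalized form, the assumption $b_1 = b'_1 = 1$ forces $b, b'$ to be either equal or linearly independent over $\Z_3$ (no nontrivial scalar multiple can fix the first coordinate). In the linearly independent case the system $b \cdot x \equiv c,\ b' \cdot x \equiv c'$ has rank $2$, so $|I \cap J| = 3^{n-2} > 0$ and $I, J$ are not disjoint. Hence $b = b'$, and disjointness then forces $c' - c \in \{1, 2\}$. The main obstacle is establishing the partition lemma cleanly via the intersection-size count; once that is in hand, everything else reduces to routine linear algebra and the observation that bijections of $\Z_3$ are affine.
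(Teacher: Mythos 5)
Your proposal is correct and follows essentially the same route as the paper: both slice a maximum independent set by its last coordinate into three disjoint maximum independent sets one dimension down, apply induction to identify them as parallel hyperplane cosets, and prove the ``moreover'' uniqueness claim by the same rank-two count showing that non-parallel normalized hyperplanes always meet in $3^{n-2}$ points. The only cosmetic difference is that you establish the parallelism of the three slices via a standalone intersection-counting lemma (and recombine using the fact that every permutation of $\Z_3$ is affine), whereas the paper obtains it by invoking the ``moreover'' statement at the lower dimension and doing an explicit two-case recombination.
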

\begin{proof}
We prove this by induction. For the base case $n = 1$, the only independent sets of size $1$ are $\{0\}$, $\{1\}$, and $\{2\}$ which are given by $\{x \in \Z_3: x \equiv 0 \mod 3\}$, $\{x \in \Z_3: x \equiv 1 \mod 3\}$, and $\{x \in \Z_3: x \equiv 2 \mod 3\}$ respectively. All three of these indepdendent sets are disjoint.

For the inductive step, assume that the statement is true for $n \leq k$. By the inductive hypothesis, maximum size independent sets of $H(k,3)$ have size $3^{k-1}$ so the only way to obtain an independent set $I$ of $H(k+1,3)$ of size $3^{k}$ is if $I = (I_0,0) \cup (I_1,1) \cup (I_2,2)$ where $I_0,I_1,I_2$ are disjoint independent sets of $Z_{3}^{k}$ of size $3^{k-1}$. By the inductive hypothesis, $I_0 = \{x \in \Z_3^k : \sum_{i = 1}^{k}{b_i{x_i}} \equiv c \mod 3\}$ for some $b \in \{1,2\}^k$ and $c \in \Z_3$ such that $b_1 = 1$. Moreover, since the only independent sets of $H(k,3)$ of size $3^{k-1}$ which are disjoint from $I_0$ are $I' = \{x \in \Z_3^k : \sum_{i = 1}^{k}{b_i{x_i}} \equiv c+1 \mod 3\}$ and $I'' = \{x \in \Z_3^k : \sum_{i = 1}^{k}{b_i{x_i}} \equiv c+2 \mod 3\}$, we must either have that $I_1 = I'$ and $I_2 = I''$ or $I_1 = I''$ and $I_2 = I'$. In the first case, we have that $I = \{x \in \Z_3^k : \sum_{i = 1}^{k}{b_i{x_i}} + 2x_{k+1} \equiv c \mod 3\}$. In the second case, we have $I = \{x \in \Z_3^k : \sum_{i = 1}^{k}{b_i{x_i}} + x_{k+1} \equiv c \mod 3\}$.

To show the moreover statement, observe that if $b,b' \in \{1, 2\}^n$, $b_1 = b'_1 = 1$ and $b' \neq b$ then for any $c,c' \in \Z_3$, the linear equations $\sum_{i = 1}^{n}{b_i{x_i}} \equiv c \mod 3$ and $\sum_{i = 1}^{n}{b'_i{x_i}} \equiv c' \mod 3$ have $3^{n-2}$ common solutions.
\end{proof}

For our analysis, it is useful to fix three disjoint maximum size independent sets of $H(n,3)$.
\begin{Def}\label{ABCn}
For each $n \in \mathbb{N}$, we define $A_n = \{x \in \Z_3^n : \sum_{i = 1}^n x_i  \equiv 0 \mod 3\}$, $B_n = \{x \in \Z_3^n : \sum_{i = 1}^n x_i \equiv 1 \mod 3\}$ and $C_n = \{x \in \Z_3^n : \sum_{i = 1}^n x_i  \equiv 2 \mod 3\}$. 
\end{Def}
The following recursive definition of $A_n$, $B_n$, and $C_n$ is useful.
\begin{Prop} $A_1 = \{0\}$, $B_1 = \{1\}$ and $C_1 = \{2\}$ and for all $n \in \mathbb{N}$,
\begin{enumerate}
    \item $A_{n+1} = (A_{n},0) \cup (C_{n},1) \cup (B_{n},2)$. We can represent this visually as  $A_{n+1} = $
\begin{tabular}{|c|c|c|}
 \hline
  $A_{n}$ & $C_{n}$ & $B_{n}$ \\
 \hline
\end{tabular}.
    \item $B_{n+1} = (B_{n},0) \cup (A_{n},1) \cup (C_{n},2)$. We can represent this visually as  $B_{n+1} = $
\begin{tabular}{|c|c|c|}
 \hline
  $B_{n}$ & $A_{n}$ & $C_{n}$ \\
 \hline
\end{tabular}.
    \item $C_{n+1} = (C_{n},0) \cup (B_{n},1) \cup (A_{n},2)$. We can represent this visually as  $C_{n+1} = $
\begin{tabular}{|c|c|c|}
 \hline
  $C_{n}$ & $B_{n}$ & $A_{n}$ \\
 \hline
\end{tabular}.
\end{enumerate}
\end{Prop}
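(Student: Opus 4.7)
The plan is to verify the statement by directly unpacking the definitions of $A_n, B_n, C_n$ given in \defref{ABCn}. The base case $n=1$ is immediate: $\sum_{i=1}^1 x_i = x_1$, so $A_1 = \{x_1 \in \Z_3 : x_1 = 0\} = \{0\}$, and similarly $B_1 = \{1\}$ and $C_1 = \{2\}$.

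For the inductive/recursive claim, I would proceed by a case analysis on the value of the last coordinate $x_{n+1}$. Fix $n \in \mathbb{N}$ and consider any $x = (x_1, \ldots, x_{n+1}) \in \Z_3^{n+1}$. The key observation is that
\[
\sum_{i=1}^{n+1} x_i \equiv k \pmod 3 \iff \sum_{i=1}^{n} x_i \equiv k - x_{n+1} \pmod 3.
\]
Partitioning $A_{n+1}$ according to $x_{n+1} \in \{0, 1, 2\}$ and applying this observation with $k = 0$ gives three conditions on $(x_1, \ldots, x_n)$: namely $\sum_{i=1}^n x_i \equiv 0, 2, 1 \pmod 3$ respectively. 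By \defref{ABCn}, these conditions correspond exactly to membership in $A_n$, $C_n$, and $B_n$, so $A_{n+1} = (A_n, 0) \cup (C_n, 1) \cup (B_n, 2)$.

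The same argument applied with $k = 1$ yields the required conditions $\sum_{i=1}^n x_i \equiv 1, 0, 2 \pmod 3$ for $x_{n+1} = 0, 1, 2$, proving $B_{n+1} = (B_n, 0) \cup (A_n, 1) \cup (C_n, 2)$. Finally, with $k = 2$ the conditions become $\sum_{i=1}^n x_i \equiv 2, 1, 0 \pmod 3$, yielding $C_{n+1} = (C_n, 0) \cup (B_n, 1) \cup (A_n, 2)$.

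There is essentially no obstacle here; the statement is a direct consequence of the defining linear equations. The only thing to be careful about is the arithmetic modulo $3$ when computing $k - x_{n+1}$ in each case, and ensuring the unions are disjoint (which follows since the slices are indexed by distinct values of $x_{n+1}$). The visual tabular representation is then just a convenient rewriting of the same three-way partition.
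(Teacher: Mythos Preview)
Your proof is correct. The paper does not give an explicit proof of this proposition, treating it as immediate from the defining equations in \defref{ABCn}; your direct case analysis on the last coordinate is exactly the natural verification one would supply.
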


\begin{figure}
    \centering
\begin{tabular}{|c|c|c|}
 \hline
 $\bullet$  &  &  \\
 \hline
   &   & $\bullet$ \\
 \hline
  & $\bullet$ &  \\
 \hline
\end{tabular} \ 
\begin{tabular}{|c|c|c|}
 \hline
 &  & $\bullet$ \\
 \hline
   & $\bullet$ &  \\
 \hline
 $\bullet$ &   &  \\
 \hline
\end{tabular} \ 
\begin{tabular}{|c|c|c|}
 \hline
   & $\bullet$ &  \\
 \hline
$\bullet$ &   &  \\
 \hline
  &  & $\bullet$ \\
 \hline
\end{tabular}
    \caption{This figure shows the independent set $A_3$. Note that the first block of $A_3$ is $A_2$, the second block of $A_3$ is $C_2$, and the third block of $A_3$ is $B_2$.}
    \label{fig:independentset}
\end{figure}

For an illustration of the independent set $A_3$, see Figure \ref{fig:independentset}.

\subsection{Collapsing one dimension of $\Z_3^n$}
For our analysis, it is very useful to collapse one dimension of $\Z_3^n$. To do this, we represent a subset $U \subseteq \Z_3^n$ as a function $U_f : \Z_3^{n-1} \to \mathcal{P}(Z_3)$.
\begin{Def}
    Let $U$ be a subset of $\Z_3^n$ and $\mathcal{P}(\Z_3)$ denote the power set of $\Z_3^n$. We define $U_f$ to be the function $U_f : \Z_3^{n-1} \to \mathcal{P}(Z_3)$ such that $U = \cup_{x \in \Z_3^{n-1}}{(U_f(x) \times \{x\})}$.
\end{Def}

We will focus on subsets $U$ with induced degree at most 1. For these subsets, the corresponding function $U_f$ has range $\mathcal{P}(\Z_3) \setminus \{\Z_3\}$ so the following definition is useful.

\begin{Def}
    We define $A = A_1 = \{0\}$, $B = B_1 = \{1\}$, $C = C_1 = \{2\}$, $X = \Z_3 \setminus A = \{1,2\}$, $Y = \Z_3 \setminus B = \{0,2\}$, and $Z = \Z_3 \setminus C = \{0,1\}$.
\end{Def}
For an illustration of $A$, $B$, $C$, $X$, $Y$, and $Z$, see Figure \ref{fig:ABCXYZ}.

\begin{figure}[!hbt]
    \centering
$A = $
\begin{tabular}{|c|c|c|}
 \hline
  $\bullet$ & ${\transparent{0} \bullet}$ & ${\transparent{0} \bullet}$ \\
 \hline
\end{tabular}, \ 
$B = $
\begin{tabular}{|c|c|c|}
 \hline
  ${\transparent{0} \bullet}$ & $\bullet$ & ${\transparent{0} \bullet}$ \\
 \hline
\end{tabular}, \ 
$C = $
\begin{tabular}{|c|c|c|}
 \hline
  ${\transparent{0} \bullet}$ & ${\transparent{0} \bullet}$ & $\bullet$ \\
 \hline
\end{tabular}, \\
$X = $
\begin{tabular}{|c|c|c|}
 \hline
  ${\transparent{0} \bullet}$ & $\bullet$ & $\bullet$ \\
 \hline
\end{tabular}, \ 
$Y = $
\begin{tabular}{|c|c|c|}
 \hline
  $\bullet$ & ${\transparent{0} \bullet}$ & $\bullet$ \\
 \hline
\end{tabular}, \ 
$Z = $
\begin{tabular}{|c|c|c|}
 \hline
  $\bullet$ & $\bullet$ & ${\transparent{0} \bullet}$ \\
 \hline
\end{tabular} \ 

    \caption{This figure shows $A$, $B$, $C$, $X$, $Y$, and $Z$.}
    \label{fig:ABCXYZ}
\end{figure}

\begin{Prop}
\label{ABCXYZcharacterization}
Let $U = \cup_{x \in \Z_3^{n-1}}U_f(x) \times \{x\} \subseteq \Z_3^n$. Then $U$ has maximum induced degree at most $1$ if and only if the following conditions are satisfied for all $x \in \Z_3^{n-1}$ and $N(x) = \{y \in \Z_3^{n-1} : d_H(x, y) = 1\}$:
\begin{enumerate}
\item $U_f(x) \ne \Z_3$.
\item If $U_f(x) = A$, then $U_f(y) = A$ for at most one $y \in N(x)$ and $U_f(y) \notin \{Y, Z\}$ for all $y \in N(x)$.
\item If $U_f(x) = B$, then $U_f(y) = B$ for at most one $y \in N(x)$ and $U_f(y) \notin \{X, Z\}$ for all $y \in N(x)$.
\item If $U_f(x) = C$, then $U_f(y) = C$ for at most one $y \in N(x)$ and $U_f(y) \notin \{X, Y\}$ for all $y \in N(x)$.
\item If $U_f(x) = X$, then $U_f(y) \in \{\phi, A\}$ for all $y \in N(x)$.
\item If $U_f(x) = Y$, then $U_f(y) \in \{\phi, B\}$ for all $y \in N(x)$.
\item If $U_f(x) = Z$, then $U_f(y) \in \{\phi, C\}$ for all $y \in N(x)$.
\end{enumerate}
\end{Prop}

\begin{Def}[$i$-saturated]
Given $U \subseteq \Z_3^n$ and $i \in [n]$, we say that $U$ is \emph{{\isat}} if $U \cap \{x,x+e_i,x+2e_i\} \neq \phi$ for all $x \in \Z_3^n$. In other words, $U$ is $i$-saturated if and only if every line in direction $e_i$ contains at least one point of $U$.
\end{Def}

In terms of the function representation, $U$ is 1-saturated if and only if $U_f(x) \ne \phi$ for all $x \in \Z_3^{n-1}$. Together with Proposition \ref{ABCXYZcharacterization}, we have the following:

\begin{Prop}
If $U \subseteq \Z_3^{n}$, $U$ is \1sat and $U$ induces a subgraph of maximum degree at most $1$ then for all $x \in \Z_3^{n-1}$, $U_f(x) \in \{A,B,C,X,Y,Z\}$.
\end{Prop}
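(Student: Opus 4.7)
The plan is to unpack the two hypotheses and show they together force each line in direction $e_1$ to contain either one or two points of $U$, and these six possibilities are exactly $A,B,C,X,Y,Z$.

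First I would check that $U_f$ is well-defined, i.e., that no line $\{x, x+e_1, x+2e_1\}$ contains all three points of $U$. If all three were in $U$, then the middle point $x+e_1$ would be adjacent in $H(n,3)$ to both $x$ and $x+2e_1$, giving it induced degree at least $2$, contradicting the assumption that $U$ has induced degree at most $1$. Hence every line in direction $e_1$ contains at most two points of $U$, so the hypothesis of the definition of $U_f$ is satisfied and $U_f : \Z_3^{n-1} \to \{\phi, A, B, C, X, Y, Z\}$ is well-defined.

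Next I would use the {\1sat} assumption to rule out the value $\phi$. By definition of {\1sat}, for every $x \in \Z_3^{n-1}$ the line $\{(0,x),(1,x),(2,x)\}$ intersects $U$ in at least one point, so $U_f(x) \neq \phi$.

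Combining the two steps, $U_f(x)$ is a subset of $\Z_3$ of size either $1$ or $2$. The three singletons of $\Z_3$ are $A = \{0\}$, $B = \{1\}$, $C = \{2\}$ and the three two-element subsets are $X = \{1,2\}$, $Y = \{0,2\}$, $Z = \{0,1\}$, exhausting the allowed values. Thus $U_f(x) \in \{A, B, C, X, Y, Z\}$ for every $x \in \Z_3^{n-1}$. There is no real obstacle here: the statement is essentially a bookkeeping lemma verifying that the notation introduced in the preceding definitions is consistent with the hypotheses used throughout the upper bound proof in \secref{section:upper}.
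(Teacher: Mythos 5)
Your proof is correct and matches the paper's intent: the paper states this proposition without proof as an immediate consequence of the definitions, and your two observations (degree at most $1$ rules out three collinear points in direction $e_1$, and $1$-saturation rules out $\phi$) are exactly the verification required.
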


\section{Canonical Sets}
\label{section:canonical}
We first consider subsets of $\Z_3^n$ that have induced degree $1$ and are disjoint from a maximum size independent set of $H(n,3)$. It was known that such subsets can have size $3^{n-1}+1$ \cite{GK22}. In this section, we show that this is the maximum possible size for such a subset. Specifically, if $U \subseteq \Z_3^n$ has induced degree $1$ and $U$ is disjoint from a maximum size independent set of $H(n,3)$, then $|U| \le 3^{n-1}+1$. Moreover, up to isomorphism, there is only one such subset of size $3^{n-1}+1$.

\begin{Def}
    We say that $U \subseteq \Z_3^n$ is a \emph{canonical set} if $|U| \geq 3^{n-1}+1$, $U$ is disjoint from a maximum size independent set of $H(n,3)$, and $U$ has induced degree at most $1$.
\end{Def}
\subsection{Definition of $D_n$ and facts about $D_n$}
Up to isomorphism, the only canonical set in $\Z_3^n$ is the set $D_n$ which is defined as follows.
\begin{Def}
\label{D_n}

We define $D_n$ recursively.
\begin{enumerate}
\item $D_1 = \{1, 2\}$
\item For all $n \in \mathbb{N}$, $D_{n+1} = (D_{n},0) \cup (A_{n},1) \cup (A_{n},2)$.
\end{enumerate}
\end{Def}

\begin{figure}[!hbt]
    \centering
$D_2 = $
\begin{tabular}{|c|c|c|}
 \hline
 $X$ & $A$ & $A$ \\
 \hline
\end{tabular} $=$
\begin{tabular}{|c|c|c|}
 \hline
  & $\bullet$ & $\bullet$ \\
 \hline
 $\bullet$  &   &  \\
 \hline
 $\bullet$ &  &  \\
 \hline
\end{tabular}\\
\vspace{5mm}
    $D_3 = $
\begin{tabular}{|c|c|c|}
 \hline
 $D_2$ & $A_2$ & $A_2$ \\
 \hline
\end{tabular} 
$=$
\begin{tabular}{|c|c|c|}
 \hline
  & $\bullet$ & $\bullet$ \\
 \hline
 $\bullet$  &   &  \\
 \hline
 $\bullet$ &  &  \\
 \hline
\end{tabular} \ 
\begin{tabular}{|c|c|c|}
 \hline
 $\bullet$  &  &  \\
 \hline
   &  & $\bullet$ \\
 \hline
  & $\bullet$ &  \\
 \hline
\end{tabular} \ 
\begin{tabular}{|c|c|c|}
 \hline
 $\bullet$  &  &  \\
 \hline
   &  & $\bullet$ \\
 \hline
  & $\bullet$ &  \\
 \hline
\end{tabular}

    \caption{This figure shows $D_2$ and $D_3$.}
    \label{fig:canonicalset}
\end{figure}

\begin{Prop}
For all $n \in \mathbb{N}$, $A_n \cap D_n = \phi$ and all vertices in $D_n$ have degree $1$.
\end{Prop}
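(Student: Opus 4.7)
The plan is to proceed by induction on $n$, using the recursive descriptions of both $D_n$ and $A_n$. For the base case $n=1$, we have $D_1 = \{1,2\}$ and $A_1 = \{0\}$, which are disjoint, and in $H(1,3) = K_3$ the points $1$ and $2$ are adjacent, so each has degree exactly $1$ in the induced subgraph on $D_1$.

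For the inductive step, I would combine $D_{n+1} = (D_n, 0) \cup (A_n, 1) \cup (A_n, 2)$ from \defref{D_n} with the recursive description $A_{n+1} = (A_n, 0) \cup (C_n, 1) \cup (B_n, 2)$ and compare slice by slice in $x_{n+1}$. Disjointness $A_{n+1} \cap D_{n+1} = \phi$ then reduces to three claims: $D_n \cap A_n = \phi$ (the inductive hypothesis), $A_n \cap C_n = \phi$, and $A_n \cap B_n = \phi$; the latter two are immediate since $A_n$, $B_n$, $C_n$ are distinct cosets of a hyperplane and hence pairwise disjoint.

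For the degree condition I would examine the three blocks of $D_{n+1}$ separately. A vertex $(x, 0) \in (D_n, 0)$ has as its candidate neighbors in $D_{n+1}$ the vertices $(y, 0)$ with $y \in D_n$ and $d_H(x, y) = 1$, together with $(x, 1)$ and $(x, 2)$; by the inductive hypothesis the first kind contributes exactly one neighbor, while $(x, 1)$ and $(x, 2)$ would require $x \in A_n$, which contradicts $A_n \cap D_n = \phi$. For $(x, 1) \in (A_n, 1)$, no neighbor lies in the same slice because $A_n$ is an independent set of $H(n,3)$, and $(x, 0)$ would require $x \in D_n$, which is ruled out by the inductive hypothesis; on the other hand $(x, 2) \in (A_n, 2)$ is a neighbor, giving degree exactly $1$. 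The case $(x, 2) \in (A_n, 2)$ is symmetric, with unique neighbor $(x, 1)$.

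I do not expect any substantial obstacle: the argument is a direct two-step induction and the only subtlety is carefully bookkeeping which of $A_n$, $B_n$, $C_n$, $D_n$ sits in each slice of the recursive decomposition, so that the pairwise disjointness of $A_n$, $B_n$, $C_n$ can be used both to rule out unwanted intersections and to forbid unwanted cross-block edges.
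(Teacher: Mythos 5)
Your proposal is correct and follows essentially the same route as the paper: induction via the recursive decompositions of $A_{n+1}$ and $D_{n+1}$, with disjointness following from the inductive hypothesis together with the pairwise disjointness of $A_n$, $B_n$, $C_n$, and the degree count obtained by checking each of the three blocks. The paper's proof is merely terser in stating that each vertex of a copy of $A_n$ is adjacent only to its twin in the other copy; your slice-by-slice bookkeeping fills in the same details.
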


\begin{proof}
We prove this by induction. For $n = 1$, $D_1 = \{1,2\}$ so both of the vertices in $D_1$ have degree $1$. Since $A_1 = \{0\}$, $D_1 \cap A_1 = \phi$, as needed. 

For the inductive step, assume that all vertices in $D_n$ have degree $1$ and $A_n \cap D_n = \phi$. Recall that 
\begin{enumerate}
\item $A_{n+1} = $
\begin{tabular}{|c|c|c|}
 \hline
  $A_{n}$ & $C_{n}$ & $B_{n}$ \\
 \hline
\end{tabular}
\item $D_{n+1} = $
\begin{tabular}{|c|c|c|}
 \hline
  $D_{n}$ & $A_{n}$ & $A_{n}$ \\
 \hline
\end{tabular}
\end{enumerate}
Since $A_n \cap D_n = \phi$, $A_n \cap B_n = \phi$, and $A_n \cap C_n = \phi$, $D_{n+1} \cap A_{n+1} = \phi$. To see that all vertices in $D_{n+1}$ have degree $1$, observe the following:
\begin{enumerate}
\item By the inductive hypothesis, $A_n \cap D_n = \phi$ and each vertex in $D_{n}$ has degree $1$.
\item Each vertex in one of the copies of $A_n$ is only adjacent to the same vertex in the other copy of $A_n$.
\end{enumerate}
\end{proof}

\begin{Prop}
\label{prop:numberoflines}
For all $n \in \mathbb{N}$,
\begin{enumerate}
    \item There is exactly one line of the form $\{x,x+e_1,x+2e_1\}$ which contains two points of $D_n$.
    \item For all $k \in [2,n]$, there are exactly $3^{k-2}$ lines of the form $\{x,x+e_k,x+2e_k\}$ which contain two points of $D_n$.
\end{enumerate}
\end{Prop}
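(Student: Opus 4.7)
The plan is to proceed by induction on $n$, exploiting the recursive structure $D_{n+1} = (D_n,0) \cup (A_n,1) \cup (A_n,2)$ together with the fact that $A_n$ is a maximum independent set of $H(n,3)$. The base case $n = 1$ is immediate: part 2 is vacuous, and part 1 holds because the unique line in direction $e_1$ is $\{0,1,2\}$, which meets $D_1 = \{1,2\}$ in exactly two points.

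For the inductive step, I view $D_{n+1}$ as partitioned into three slices indexed by $x_{n+1} \in \{0,1,2\}$. For any line $\ell$ in direction $e_k$ with $k \leq n$, the coordinate $x_{n+1}$ is constant on $\ell$, so $\ell$ lies entirely in one slice. The slice $x_{n+1} = 0$ is an isomorphic copy of $D_n$, which by induction contributes exactly $1$ such line when $k = 1$, and exactly $3^{k-2}$ when $k \in [2,n]$. Each of the slices $x_{n+1} \in \{1,2\}$ is a copy of $A_n$, which is independent in $H(n,3)$, so no direction-$e_k$ line inside such a slice can contain two points of $D_{n+1}$. This settles part 1 and part 2 for $k \in [2,n]$.

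It remains to handle $k = n+1$, which is the substantive case. Each line $\ell$ in direction $e_{n+1}$ is indexed by its first $n$ coordinates $y \in \Z_3^n$ and equals $\{(y,0),(y,1),(y,2)\}$. Directly from the recursive definition, $(y,0) \in D_{n+1}$ iff $y \in D_n$, while $(y,1), (y,2) \in D_{n+1}$ iff $y \in A_n$. Since the previous proposition gives $A_n \cap D_n = \phi$, the line $\ell$ contains exactly two points of $D_{n+1}$ precisely when $y \in A_n$, yielding $|A_n| = 3^{n-1} = 3^{(n+1)-2}$ such lines, as desired.

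There is no real obstacle here; the entire argument reduces to a case split on $k$ and on which slice contains the line. The only place any actual content appears is the $k = n+1$ case, where the disjointness $A_n \cap D_n = \phi$ is used to ensure each relevant line hits $D_{n+1}$ in exactly two points rather than in a potentially different count.
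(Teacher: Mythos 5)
Your proof is correct. The paper states Proposition \ref{prop:numberoflines} without proof (treating it as immediate from the recursive definition $D_{n+1} = (D_n,0) \cup (A_n,1) \cup (A_n,2)$), and your induction --- splitting on whether the line lies in a fixed slice $x_{n+1}=c$ or runs in the new direction $e_{n+1}$, using independence of $A_n$ for the former and $A_n \cap D_n = \phi$ for the latter --- is exactly the intended argument.
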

\begin{proof}
We prove this by induction. For $n = 1$, there is only one line $\{0,e_1,2e_1\}$ and this line contains two points of $D_1$. For the inductive step, assume that the result is true for $D_n$ and consider $D_{n+1} = $
\begin{tabular}{|c|c|c|}
 \hline
  $D_{n}$ & $A_{n}$ & $A_{n}$ \\
 \hline
\end{tabular}.
Observe that since $A_n$ is an independent set of $\Z_3^n$, for all $k \in [n]$, a line of the form $\{x, x+e_k, x + 2e_k\}$ in $\Z_3^{n+1}$ contains two points of $D_{n+1}$ if and only if $x_{n+1} = 0$ and the line $\{x, x+e_k, x + 2e_k\}$ contains two points of $D_n$. Thus, by the inductive hypothesis, there is exactly one line of the form $\{x,x+e_1,x+2e_1\}$ which contains two points of $D_{n+1}$ and for all $k \in [2,n]$, there are exactly $3^{k-2}$ lines of the form $\{x,x+e_k,x+2e_k\}$ which contain two points of $D_{n+1}$.

For $k = n+1$, since $A_n \cap D_n = \phi$, a line of the form $\{x, x+e_{n+1}, x + 2e_{n+1}\}$ contains two points of $D_{n+1}$ if and only if $(x_1,\ldots,x_n) \in A_n$. Since $|A_n| = 3^{n-1}$, there are exactly $3^{n-1} = 3^{k-2}$ such lines.
\end{proof}

\begin{Cor}
    For all $n \geq 2$, $D_n$ is \1sat and $2$-saturated but is not {\isat} for any $i \geq 3$.
\end{Cor}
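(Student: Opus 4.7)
The plan is to deduce all three saturation statements from \propref{prop:numberoflines} by a single double-counting over direction-$e_k$ lines. Fix a direction $e_k$, and for $j \in \{0,1,2,3\}$ let $N_j$ denote the number of lines of the form $\{x,x+e_k,x+2e_k\}$ containing exactly $j$ points of $D_n$. The key observation is that $N_3 = 0$: if a line contained three points of $D_n$, the middle point would have induced degree at least $2$, contradicting the earlier proposition that every vertex of $D_n$ has degree exactly $1$ in the induced subgraph.

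Since the $3^{n-1}$ direction-$e_k$ lines partition $\Z_3^n$, summing line incidences yields
\[
N_0 + N_1 + N_2 = 3^{n-1}, \qquad N_1 + 2 N_2 = |D_n| = 3^{n-1}+1,
\]
where $|D_n|=3^{n-1}+1$ follows from the recursion $|D_{n+1}| = |D_n| + 2\cdot 3^{n-1}$ with $|D_1|=2$. Subtracting these two equations gives the clean identity $N_0 = N_2 - 1$, so $D_n$ is $k$-saturated precisely when $N_2 \le 1$.

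To finish, I would simply plug in the values of $N_2$ supplied by \propref{prop:numberoflines}. For $k=1$ we have $N_2 = 1$, forcing $N_0 = 0$, so $D_n$ is $1$-saturated. For $k = 2$ we have $N_2 = 3^{0} = 1$, again giving $N_0 = 0$, so $D_n$ is $2$-saturated. For each $k \in [3,n]$, $N_2 = 3^{k-2} \ge 3$, so $N_0 = 3^{k-2} - 1 \ge 2 > 0$, producing lines in direction $e_k$ disjoint from $D_n$ and showing that $D_n$ is not $k$-saturated.

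There is no substantive obstacle: the corollary is essentially a one-line consequence of the preceding proposition once one records $|D_n|$ and rules out collinear triples via the induced-degree-$1$ property. The only thing to be slightly careful about is bookkeeping the small cases where $n < 3$ (for which the non-saturation claim is vacuous) and confirming that the formula $N_2 = 3^{k-2}$ specialises correctly at $k=2$.
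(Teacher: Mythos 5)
Your proposal is correct and is exactly the deduction the paper intends: the corollary is stated without proof as an immediate consequence of Proposition \ref{prop:numberoflines}, and your double count (using $|D_n|=3^{n-1}+1$, the absence of collinear triples, and the identity $N_0=N_2-1$) is the natural way to make that implication explicit. The specializations $N_2=1$ for $k\in\{1,2\}$ and $N_2=3^{k-2}\ge 3$ for $k\ge 3$ are all correct, so nothing is missing.
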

\begin{proof}
Since $|D_{n}| = 3^{n-1} + 1$ and $D_n$ has maximum induced degree $1$, for all $k \in [n]$, the number of lines of the form $\{x,x+e_k,x+2e_k\}$ which contain zero points of $D_n$ is equal to the number of lines of the form $\{x,x+e_k,x+2e_k\}$ which contain two points of $D_n$ minus one. By Proposition \ref{prop:numberoflines}, this is $0$ for $k \leq 2$ and $3^{k-2} - 1 > 0$ when $k > 2$.
\end{proof}

We now observe that for all $n \in \mathbb{N}$, there is exactly one other maximum size independent set of $H(n,3)$ which is disjoint from $D_n$.
\begin{Def}
Define 
\[
A'_n = \left\{x \in \Z_3^n : \left(\sum_{i = 1}^{n-1}{x_i}\right) + 2x_n \equiv 0 \mod 3\right\} = (A_{n-1},0) \cup (B_{n-1},1) \cup (C_{n-1},2).
\]
\end{Def}
\begin{Prop}
\label{unique_max_ind}
For all $n \in \mathbb{N}$, if $I$ is a maximum size independent set of $H(n,3)$ and $I \cap D_n = \phi$ then $I = A_n$ or $I = A'_n$.
\end{Prop}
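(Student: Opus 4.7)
The plan is to induct on $n$. The base case $n=1$ is immediate: $D_1 = \{1,2\}$ and the only independent set of $H(1,3)$ of size $1$ disjoint from it is $\{0\}$, which equals both $A_1$ and $A'_1$.

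For the inductive step, suppose the statement holds for $n$, and let $I$ be a maximum size independent set of $H(n+1,3)$ with $I \cap D_{n+1} = \phi$. I would decompose $I$ according to its last coordinate, writing $I = (I_0, 0) \cup (I_1, 1) \cup (I_2, 2)$. By the reasoning used in the proof of \propref{coset_char}, $I_0, I_1, I_2$ must be pairwise disjoint maximum size independent sets of $H(n, 3)$. From the recursive description $D_{n+1} = (D_{n}, 0) \cup (A_{n}, 1) \cup (A_{n}, 2)$, the condition $I \cap D_{n+1} = \phi$ translates into three separate conditions: $I_0 \cap D_n = \phi$, $I_1 \cap A_n = \phi$, and $I_2 \cap A_n = \phi$. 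Applying the inductive hypothesis to $I_0$ gives $I_0 \in \{A_n, A'_n\}$.

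In the first case, $I_0 = A_n$. By \propref{coset_char}, the only maximum size independent sets of $H(n,3)$ disjoint from $A_n$ are $B_n$ and $C_n$, so $\{I_1, I_2\} = \{B_n, C_n\}$ as an unordered pair, and both satisfy the required disjointness from $A_n$ automatically. The two orderings recover exactly $A_{n+1}$ and $A'_{n+1}$, using the recursive definitions $A_{n+1} = (A_n, 0) \cup (C_n, 1) \cup (B_n, 2)$ and $A'_{n+1} = (A_n, 0) \cup (B_n, 1) \cup (C_n, 2)$.

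The main obstacle is ruling out the second case, where $I_0 = A'_n$ and $A'_n \neq A_n$ (i.e.\ $n \geq 2$; the $n = 1$ degeneracy collapses into the first case). By \propref{coset_char}, the two maximum size independent sets of $H(n,3)$ disjoint from $A'_n$ are the sets defined by $\sum_{i=1}^{n-1} x_i + 2x_n \equiv 1$ and $\sum_{i=1}^{n-1} x_i + 2x_n \equiv 2$ modulo $3$. Intersecting either of these with $A_n = \{x : \sum_{i=1}^n x_i \equiv 0\}$ yields a $2 \times n$ linear system over $\Z_3$ whose two equations are linearly independent (for $n \geq 2$), so each intersection has $3^{n-2} > 0$ solutions. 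Hence neither candidate can serve as $I_1$ or $I_2$, contradicting the required disjointness from $A_n$. This rules out Case 2 and forces $I \in \{A_{n+1}, A'_{n+1}\}$, completing the induction. The only non-routine step is the explicit intersection calculation, which reduces to solving two linear equations in $\Z_3$.
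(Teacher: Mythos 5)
Your proof is correct. Both you and the paper decompose $I$ into its three layers $(I_0,0)\cup(I_1,1)\cup(I_2,2)$ and lean on \propref{coset_char}, but the order of deduction is genuinely different. The paper never uses the inductive hypothesis at all (its proof is effectively a direct argument phrased for $n+1$): it first observes that $I_1$ and $I_2$ must each be disjoint from the copy of $A_n$ sitting in that layer of $D_{n+1}$, hence $\{I_1,I_2\}=\{B_n,C_n\}$, and then concludes $I_0=A_n$ because $A_n$ is the only maximum independent set disjoint from both $B_n$ and $C_n$ --- the condition $I_0\cap D_n=\phi$ is never invoked. You instead work from the inside out: you apply the inductive hypothesis to $I_0\cap D_n=\phi$ to get $I_0\in\{A_n,A'_n\}$, and must then spend extra effort killing the case $I_0=A'_n$ via the observation that the two cosets parallel to $A'_n$ each meet $A_n$ in $3^{n-2}$ points (which is exactly the linear-independence computation already done in the moreover-part of \propref{coset_char}). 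Your route is a true induction and costs one additional case; the paper's route is shorter and shows the conclusion really only depends on the two $A_n$ blocks of $D_{n+1}$. Both are valid.
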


\begin{proof}
We prove this for $n+1$ rather than $n$ to make the diagrams nicer.

Recall that $D_{n+1} = $
\begin{tabular}{|c|c|c|}
 \hline
  $D_{n}$ & $A_{n}$ & $A_{n}$ \\
 \hline
\end{tabular}
and let $I$ be an independent set of $H(n+1,3)$ of maximum size which is disjoint from $D_{n+1}$. Since the only independent sets of $H(n,3)$ of maximum size which are disjoint from $A_{n}$ are $B_{n}$ and $C_{n}$, $I$ must be equal to one of the following two possibilities 
\begin{enumerate}
\item $I = $
\begin{tabular}{|c|c|c|}
 \hline
  $I'$ & $B_{n}$ & $C_{n}$ \\
 \hline
\end{tabular}
 \item $I = $
\begin{tabular}{|c|c|c|}
 \hline
  $I'$ & $C_{n}$ & $B_{n}$ \\
 \hline
\end{tabular}
\end{enumerate}
for some independent set $I'$ of $H(n,3)$ of maximum size. Since $A_{n}$ is the only independent set of 
$H(n,3)$ of maximum size which is disjoint from $B_{n}$ and $C_{n}$, we must have $I' = A_{n}$. This implies that $I = A_{n+1}$ or $I = A'_{n+1}$, as needed.
\end{proof}

Before proving our uniqueness theorem, we need one more fact.
\begin{Lemma}\label{lem:canonicalsetintersection}
If $U \subseteq \Z_3^n$ is isomorphic to $D_n$ and $A_n \cap U = \phi$ then $|U \cap B_n| = |U \cap C_n| = \frac{|U|}{2}$.
\end{Lemma}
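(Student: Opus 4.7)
The plan is to exhibit an automorphism $\pi \in \Aut(H(n,3))$ that fixes $U$ setwise, fixes $A_n$, and swaps $B_n$ with $C_n$. Once such a $\pi$ is in hand, $|U \cap B_n| = |\pi(U \cap B_n)| = |U \cap C_n|$ is immediate, and since $U \cap A_n = \phi$ forces $U \subseteq B_n \cup C_n$ this yields $|U \cap B_n| = |U \cap C_n| = |U|/2$.

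First I would produce two explicit symmetries of $D_n$ living in $\Aut(H(n,3))$. The negation $\sigma(x) = -x$ (the composition of the ``multiply by $2$'' operation over every coordinate) stabilises $D_n$ setwise by a straightforward induction on the recursion $D_{n+1} = (D_{n},0) \cup (A_{n},1) \cup (A_{n},2)$, using that $-A_n = A_n$; moreover $\sigma$ fixes $A_n$ and swaps $B_n \leftrightarrow C_n$. The coordinate-flip $\tau(x_1,\ldots,x_n) = (x_1,\ldots,x_{n-1},-x_n)$ also stabilises $D_n$: the slice $(D_n,0)$ is preserved while the two slices $(A_n,1)$ and $(A_n,2)$ are interchanged. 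A direct check on the defining hyperplane equations shows $\tau(A_n) = A'_n$, and since $\tau$ is an involution it swaps $A_n$ and $A'_n$.

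Next, pick $\psi \in \Aut(H(n,3))$ with $\psi(D_n) = U$. Because $\psi^{-1}(A_n)$ is a maximum size independent set disjoint from $D_n$, \propref{unique_max_ind} gives $\psi^{-1}(A_n) \in \{A_n, A'_n\}$. If $\psi^{-1}(A_n) = A'_n$, replace $\psi$ by $\psi \circ \tau$: this still sends $D_n$ to $U$ (since $\tau$ stabilises $D_n$), while now $(\psi \circ \tau)^{-1}(A_n) = \tau^{-1}(A'_n) = A_n$. So we may assume $\psi(A_n) = A_n$.

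Finally, set $\pi := \psi \sigma \psi^{-1}$. It fixes $U$ because $\sigma$ fixes $D_n$, and it fixes $A_n$ because both $\psi$ and $\sigma$ do. Since every graph automorphism of $H(n,3)$ is affine, it sends parallel hyperplanes to parallel hyperplanes, so $\psi$ permutes the parallel family $\{A_n, B_n, C_n\}$ and, having fixed $A_n$, permutes $\{B_n, C_n\}$. A one-line case check using $\sigma(B_n) = C_n$ then gives $\pi(B_n) = C_n$ in either subcase, finishing the proof. The main obstacle is precisely the case $\psi^{-1}(A_n) = A'_n$, which is what forces the introduction of the auxiliary symmetry $\tau$ swapping $A_n \leftrightarrow A'_n$; once $\psi$ has been normalised so that $\psi(A_n) = A_n$, the rest is routine conjugation bookkeeping.
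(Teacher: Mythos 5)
Your argument is correct, but it is genuinely different from the one in the paper. The paper's proof is a two-line matching argument: since every vertex of $D_n$ (hence of $U$) has degree exactly $1$, $U$ decomposes into a perfect matching $M$; each edge of $M$ has both endpoints outside $A_n$ and cannot have both endpoints in the independent set $B_n$ nor both in $C_n$, so each edge contributes exactly one point to $B_n$ and one to $C_n$. Your proof instead conjugates the explicit symmetries $\sigma(x)=-x$ and $\tau(x)=(x_1,\dots,x_{n-1},-x_n)$ of $D_n$ through an isomorphism $\psi\colon D_n\to U$, normalised via \propref{unique_max_ind} so that $\psi(A_n)=A_n$, to obtain an automorphism fixing $U$ and $A_n$ and swapping $B_n\leftrightarrow C_n$. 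All the steps check out: $\sigma$ and $\tau$ do stabilise $D_n$ by induction on the recursion, $\tau$ does swap $A_n\leftrightarrow A'_n$, and the case analysis on whether $\psi$ fixes or swaps $B_n,C_n$ is handled correctly. Your route is heavier — it needs \propref{unique_max_ind} and the structure of $\Aut(H(n,3))$ — but it yields slightly more, namely an explicit symmetry of $U$ realising the bijection between $U\cap B_n$ and $U\cap C_n$. The paper's matching argument is more elementary and more robust: it only uses that every vertex of $U$ has degree exactly $1$ and that $U$ avoids $A_n$, not any global symmetry of $D_n$, which is why the paper phrases the hypothesis as ``isomorphic to $D_n$'' but really only invokes the degree condition.
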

\begin{proof}
Recall that every vertex of $D_n$ has degree $1$ so there is a matching $M$ of size $\frac{3^{n-1}+1}{2}$ between the vertices of $U$. For each edge $\{u,v\} \in M$, at most one of $u$ and $v$ are in $B_n$ as $B_n$ is an independent set. Similarly, at most one of $u$ and $v$ are in $C_n$. Since $u,v \notin A_n$ as $U \cap A_n = \phi$, either $u$ in $B_n$ and $v \in C_n$ or $v$ in $B_n$ and $u \in C_n$. Since this is true for all edges $\{u,v\} \in M$, the result follows.
\end{proof}
\subsection{Uniqueness of canonical sets}
We are now ready to prove our uniqueness theorem.

\begin{Thm}
\label{uniqueDn}
Let $U \subseteq \Z_3^n$. If $|U| \geq 3^{n-1}+1$, $U$ has induced degree 1 and $U$ is disjoint from a maximum size independent set of $H(n,3)$, then there exists $\sigma \in \Aut(H(n,3))$ such that $\sigma(U) = D_{n}$.
\end{Thm}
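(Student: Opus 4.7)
I will prove \thmref{uniqueDn} by induction on $n$. The base case $n=1$ is immediate: disjointness from a singleton maximum size independent set together with $|U| \geq 2$ forces $U$ to be the two remaining singletons of $\Z_3$, isomorphic to $D_1 = \{1,2\}$ by a translation.

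For the inductive step, I first apply an automorphism of $H(n,3)$ so that WLOG $U \cap A_n = \emptyset$, then slice $U$ along $e_n$ into $U_0, U_1, U_2 \subseteq \Z_3^{n-1}$ with $U_0 \subseteq B_{n-1} \cup C_{n-1}$, $U_1 \subseteq A_{n-1} \cup B_{n-1}$, and $U_2 \subseteq A_{n-1} \cup C_{n-1}$. Each slice has induced degree at most $1$ and is disjoint from a maximum size independent set of $\Z_3^{n-1}$, so the inductive hypothesis gives $|U_i| \leq 3^{n-2}+1$ with equality if and only if $U_i \cong D_{n-1}$.

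The key claim is that at most one slice attains $3^{n-2}+1$. If both $U_0$ and $U_1$ did, both would be isomorphic to $D_{n-1}$ and hence $1$-regular within their slice, so no vertex of either could support a vertical neighbor in $U$; in particular $U_0 \cap B_{n-1}$ and $U_1 \cap B_{n-1}$ would be disjoint subsets of $B_{n-1}$. By \lemmaref{lem:canonicalsetintersection} applied to $U_0$ and its analog (obtained by composing with the translation by $e_1$, which cyclically permutes $\{A_{n-1}, B_{n-1}, C_{n-1}\}$) applied to $U_1$, each intersection has size $(3^{n-2}+1)/2$, and the sum $3^{n-2}+1$ exceeds $|B_{n-1}| = 3^{n-2}$. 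The pairings $(U_0, U_2)$ and $(U_1, U_2)$ are symmetric, using the common index sets $C_{n-1}$ and $A_{n-1}$ respectively. This yields $|U| \leq (3^{n-2}+1) + 2 \cdot 3^{n-2} = 3^{n-1}+1$, and since $|U| \geq 3^{n-1}+1$, equality forces exactly one big slice.

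For uniqueness, WLOG (after a further automorphism permuting the $x_n$-values and fixing $A_n$ setwise) the big slice is $U_0$. Applying the isomorphism supplied by the inductive hypothesis to the first $n-1$ coordinates sets $U_0 = D_{n-1}$ exactly. The $1$-regularity of $D_{n-1}$ then forbids $(u,1) \in U$ for every $u \in D_{n-1} \cap B_{n-1}$ and $(u,2) \in U$ for every $u \in D_{n-1} \cap C_{n-1}$. Combined with $|U_1| = |U_2| = 3^{n-2}$, in the canonical subcase $U_1 = U_2 = A_{n-1}$ one directly obtains $U = (D_{n-1},0) \cup (A_{n-1},1) \cup (A_{n-1},2) = D_n$; in non-canonical subcases, a coordinate-swap automorphism between $e_n$ and some $e_j$ with $j \in [n-1]$ re-slices $U$ along a canonical direction. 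The main obstacle will be this last step: since the slice constraints alone do not pin $U_1$ and $U_2$ down to $A_{n-1}$, the argument requires a case analysis leveraging \propref{unique_max_ind} and the recursive structure of $D_{n-1}$ to produce the desired canonicalizing automorphism in every extremal configuration.
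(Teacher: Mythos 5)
Your counting argument is correct and clean: slicing once along $e_n$, invoking the inductive hypothesis on each slice, and using Lemma~\ref{lem:canonicalsetintersection} (suitably translated) to show two slices of size $3^{n-2}+1$ would force two disjoint subsets of a common $3^{n-2}$-element independent set whose sizes sum to $3^{n-2}+1$ is a valid route to $|U|\le 3^{n-1}+1$, and it is arguably lighter than the paper's double-slicing for this half of the statement. The problem is the uniqueness half, which is the actual content of the theorem, and which you explicitly leave unfinished ("the main obstacle will be this last step"). After normalizing $U_0=D_{n-1}$ you know only that $|U_1|=|U_2|=3^{n-2}$, that $U_1\subseteq A_{n-1}\cup B_{n-1}$ and $U_2\subseteq A_{n-1}\cup C_{n-1}$, and that $U_1,U_2$ avoid $D_{n-1}$; as you yourself compute implicitly, this leaves far more than $3^{n-2}$ admissible positions, so nothing yet forces $U_1=U_2=A_{n-1}$. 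Moreover, the "non-canonical subcases" are not a formality: in the genuine extremal configuration that is a coordinate-swapped copy of $D_n$, the slice $U_1$ is of the form $(A_{n-2},0)\cup(C_{n-2},1)\cup(C_{n-2},2)$ and is \emph{not even an independent set}, so an argument that first proves "$U_1$ is an independent set disjoint from $A_{n-1}$" and then appeals to Proposition~\ref{unique_max_ind} would be attacking a false intermediate claim. You give no mechanism for locating the direction $e_j$ along which a re-slicing becomes canonical.

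This is precisely where the paper's proof does its real work: it slices in \emph{two} directions at once, so that after normalizing the corner block to $D_n$ and the ambient independent set to $A_{n+2}$, four of the nine blocks ($U_{11},U_{22}$ and, after a short argument, $U_{12},U_{21}$) are each disjoint from two of the three parallel hyperplanes $A_n,B_n,C_n$ and hence pinned inside a single one; combined with the degree-1 condition across blocks this collapses the configuration to exactly two grids, one equal to $D_{n+2}$ and one its transpose. Your single-slicing induction would need a substitute for that pinning step — e.g., a classification of all degree-$\le 1$ sets $U_1\cup U_2$ of total size $2\cdot 3^{n-2}$ sitting over $\Z_3^{n-1}\setminus D_{n-1}$ compatible with the constraints above — and until that case analysis is actually carried out, the proof of the isomorphism statement is incomplete.
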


\begin{proof}
We prove this by induction on $n$. For $n = 1$ and $n = 2$, it can be checked directly that up to isomorphism, $D_n$ is the unique subset of $\Z_3^n$ of size at least $3^{n-1} + 1$ which has induced degree $1$.

For the inductive step, assume the result is true for $\Z_3^{n+1}$ and let $U$ be a subset of $\Z_3^{n+2}$ such that $|U| \geq 3^{n+1}+1$, $U$ has induced degree $1$, and $U$ is disjoint from a maximum size independent set of $H(n+2,3)$.

Writing $U = (U_0,0) \cup (U_1,1) \cup (U_2,2)$, at least one of $U_0$, $U_1$, and $U_2$ must have size larger than $3^{n}$. By applying an appropriate translation, we can assume that $|U_0| > 3^{n}$. Since $U_0$ is disjoint from an independent set, we can apply an automorphism of $H(n+1,3)$ so that $U_0 = D_{n+1}$. After we do this, we have that\\
\begin{center}
 $U=$
\begin{tabular}{|c|c|c|}
 \hline
 $D_n$ & $A_n$ & $A_n$ \\
 \hline
 $U_{01}$  & $U_{11}$ & $U_{21}$ \\
 \hline
$U_{02}$  & $U_{12}$ & $U_{22}$ \\
 \hline
\end{tabular}
\end{center}\ \\
for some subsets $U_{01}, U_{11}, U_{21}, U_{02}, U_{12}, U_{22}$ of $\Z_3^{n}$.
Following similar logic as before and swapping the second and third rows and/or the second and third columns if needed, we can assume that $U$ is disjoint from the independent set
\begin{center}
$A_{n+2} = $
\begin{tabular}{|c|c|c|}
 \hline
 $A_{n}$ & $C_{n}$ & $B_{n}$ \\
 \hline
 $C_{n}$  & $B_{n}$ & $A_{n}$ \\
 \hline
 $B_{n}$ & $A_{n}$ & $C_{n}$ \\
 \hline
\end{tabular} \ 
\end{center}
We now make the following observations:
\begin{enumerate}
\item Since $|U| \geq 3^{n+1} + 1$, $|U_{01}| + |U_{11}| + |U_{21}| + |U_{02}| + |U_{12}| + |U_{22}| \geq 6 \cdot 3^{n-1}$.
\item $|U_{01}| + |U_{02}| \leq 2 \cdot 3^{n-1}$ as otherwise the first column would be disjoint from an independent set and would have more than $3^{n}+1$ points, which would contradict the inductive hypothesis.
\item $U_{11} \subseteq C_n$ as $U_{11}$ is disjoint from both $A_n$ and $B_n$.
\item $U_{22} \subseteq B_n$ as $U_{22}$ is disjoint from both $A_n$ and $C_n$.
\item $|U_{12}| \leq 3^{n-1}+1$ and if $|U_{12}| = 3^{n-1}+1$ then $|U_{11}| \leq 3^{n-1} - \frac{|U_{12}|}{2}$ and $|U_{22}| \leq 3^{n-1} - \frac{|U_{12}|}{2}$. To see this, observe that if $|U_{12}| > 3^{n-1}$ then since $U_{12}$ is disjoint from $A_n$, $U_{12}$ is a canonical set. By the inductive hypothesis, $|U_{12}| = 3^{n-1}+1$ and every vertex of $U_{12}$ has degree $1$. This implies that $U_{12} \cap U_{11} = \phi$ and $U_{12} \cap U_{22} = \phi$. By Lemma \ref{lem:canonicalsetintersection}, $|U_{12} \cap C_n| = \frac{|U_{12}|}{2}$ so since $U_{11} \subseteq C_n$ and $U_{11} \cap U_{12} = \phi$, $|U_{11}| \leq 3^{n-1} - \frac{|U_{12}|}{2}$
\item Following similar logic, $|U_{21}| \leq 3^{n-1}+1$ and if $|U_{21}| = 3^{n-1}+1$ then $|U_{11}| \leq 3^{n-1} - \frac{|U_{21}|}{2}$ and $|U_{22}| \leq 3^{n-1} - \frac{|U_{21}|}{2}$.
\end{enumerate}
Combining these observations, there are two possibilities for the sizes of $U_{01}, U_{11},U_{21}, U_{02}, U_{12}, U_{22}$.
\begin{enumerate}
\item $|U_{01}| + |U_{02}| = 2 \cdot 3^{n-1}$ and $|U_{11}| = |U_{12}| = |U_{21}| = |U_{22}| = 3^{n-1}$.
\item $n = 1$, $|U_{01}| + |U_{02}| = 2$, $|U_{12}| = |U_{21}| = 2$, and $|U_{11}| = |U_{22}| = 0$.
\end{enumerate}
If $|U_{01}| + |U_{02}| = 2 \cdot 3^{n-1}$ and $|U_{11}| = |U_{12}| = |U_{21}| = |U_{22}| = 3^{n-1}$, we have that
\\
\begin{center}
 $U=$
\begin{tabular}{|c|c|c|}
 \hline
 $D_n$ & $A_n$ & $A_n$ \\
 \hline
 $U_{01}$  & $C_n$ & $U_{21}$ \\
 \hline
$U_{02}$  & $U_{12}$ & $B_n$ \\
 \hline
\end{tabular}
\end{center}\ \\
We make the following further observations:
\begin{enumerate}
\item $U_{12} = B_n$ or $U_{12} = C_n$. To see this, observe that $U_{12} \cap A_n = \phi$ and $U_{12}$ must be an independent set as for every vertex $u \in U_{12}$, either $u \in U_{11} = C_n$ or $u \in U_{22} = B_n$ so if $u$ had degree $1$ in $U_{12}$ then $U$ would have a degree $2$ vertex.
\item Following similar logic, $U_{21} = B_n$ or $U_{21} = C_n$.
\item $U_{01} = U_{02} = A_n$. To see this, observe that by the first two observations, $U_{12} = C_n$ and $U_{21} = B_n$ or $U_{12} = B_n$ and $U_{21} = C_n$. In either case, $U_{01}$ and $U_{02}$ must be disjoint from both $B_n$ and $C_n$ as $U$ is disjoint from $A_{n+2}$.
\end{enumerate}
Combining these observations, we have that \\
\begin{center}
 $U=$
\begin{tabular}{|c|c|c|}
 \hline
 $D_n$ & $A_n$ & $A_n$ \\
 \hline
 $A_n$  & $C_n$ & $B_n$ \\
 \hline
$A_n$  & $C_n$ & $B_n$ \\
 \hline
\end{tabular}
or 
\begin{tabular}{|c|c|c|}
 \hline
 $D_n$ & $A_n$ & $A_n$ \\
 \hline
 $A_n$  & $C_n$ & $C_n$ \\
 \hline
$A_n$  & $B_n$ & $B_n$ \\
 \hline
\end{tabular} \\
\end{center} 
The first case is $D_{n+2}$ and the second case can be transformed into $D_{n+2}$ by swapping coordinates $n+1$ and $n+2$ (which corresponds to swapping the rows and columns.)

If $n = 1$, $|U_{01}| + |U_{02}| = 2$, $|U_{12}| = |U_{21}| = 2$, and $|U_{11}| = |U_{22}| = 0$ then we must have that\\
\begin{center}
 $U=$
\begin{tabular}{|c|c|c|}
 \hline
 $X$ & $A$ & $A$ \\
 \hline
 $A$  & $ $ & $X$ \\
 \hline
$A$  & $X$ & $ $ \\
 \hline
\end{tabular} $\simeq$ \begin{tabular}{|c|c|c|}
 \hline
  & $\bullet$ & $\bullet$ \\
 \hline
 $\bullet$  &   &  \\
 \hline
 $\bullet$ &  &  \\
 \hline
\end{tabular} \ 
\begin{tabular}{|c|c|c|}
 \hline
 $\bullet$  &  &  \\
 \hline
   &   & $\bullet$ \\
 \hline
  & $\bullet$ &  \\
 \hline
\end{tabular} \ 
\begin{tabular}{|c|c|c|}
 \hline
 $\bullet$  &  &  \\
 \hline
   &   & $\bullet$ \\
 \hline
  & $\bullet$ &  \\
 \hline
\end{tabular}.\\
\end{center}
Note that the right hand side is obtained from the left hand side by replacing $A$ with $\{0\}$ and $X = \{1,2\}$ where the replacement is done in the third coordinate rather than the first coordinate. As shown by Figure \ref{fig:canonicalset}, the right hand side is $D_3$ so we have that $U \simeq D_3$, as needed.
\end{proof}

We now make some useful observations about the structure of canonical sets.
\begin{Def}[Popular direction]
    Let $U$ be a canonical subset $U$ of $\Z_3^n$. We define the \emph{popular direction} of $U$ to be the unique $i \in [n]$ such that there are exactly $3^{n-2}$ lines of the form $\{x, x+e_i, x+2e_i\}$ which contains two points of $U$.
\end{Def}
\begin{Def}[Extra point]
Let $U \subseteq \Z_3^n$ be a \1sat canonical subset $U$ of $\Z_3^n$. Writing $U = \cup_{x \in \Z_3^{n-1}}{(U_f(x) \times \{x\})}$, we define the \emph{extra point} $x_U$ of $U$ to be the unique $x_U \in \Z_3^{n-1}$ such that $U_f(x_U)$ is $X$, $Y$, or $Z$.
\end{Def}

\begin{Def}[Affine subsets]
We define an affine subset of $\Z_3^{n}$ to be a subset $H \subseteq \Z_3^{n}$ of the form
\[
H = \{x \in \Z_3^{n}: \text{For all } i \in R_H, x_i = c_i\}
\]
for some $R_H \subseteq [n]$ and elements $\{c_i: i \in R_H\}$ where each $c_i \in \Z_3$.

We say that an affine subset $H$ contains direction $e_i$ if $i \notin R_H$ (i.e., the value of $x_i$ is not restricted by $H$).  
\end{Def}

\begin{Cor}\label{cor:canonicalsetrestriction}
If $U$ is a \1sat canonical subset $U$ of $\Z_3^n$ then for all affine subsets $H \subseteq \Z_3^n$ containing the direction $e_1$, if $H$ contains the extra point of $U$ then $U \cap H$ is a canonical set for $H$.
\end{Cor}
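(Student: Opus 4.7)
The plan is to verify directly that $U \cap H$ satisfies the three defining conditions of a canonical set for $H$: (i) $|U \cap H| \geq 3^{k-1}+1$ where $k = \dim H$, (ii) $U \cap H$ is disjoint from a maximum size independent set of the Hamming graph on $H$, and (iii) $U \cap H$ has induced degree at most $1$. Condition (iii) is immediate since the Hamming graph on $H$ is an induced subgraph of $H(n,3)$, so $U \cap H$ inherits the degree bound from $U$.

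For condition (ii), I would apply \propref{coset_char}. The maximum size independent set $I$ of $H(n,3)$ disjoint from $U$ has the form $I = \{x \in \Z_3^n : \sum_i b_i x_i \equiv c \bmod 3\}$ for some $b \in \{1,2\}^n$ with $b_1 = 1$ and $c \in \Z_3$. Because $H$ contains direction $e_1$, we have $1 \notin R_H$, so substituting the fixed values $\{c_i : i \in R_H\}$ into the defining equation of $I$ yields a nontrivial linear equation on the remaining coordinates of $H$ whose solution set has size $3^{k-1}$. This set is a maximum size independent set of the Hamming graph on $H$ (by \propref{coset_char} applied to $H$), and $U \cap H$ is disjoint from it since $U$ is disjoint from $I$.

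The substantive step is condition (i). By \thmref{uniqueDn}, $U$ is isomorphic to $D_n$, so $|U| = 3^{n-1}+1$. Since $U$ is \1sat, every line $\{x, x+e_1, x+2e_1\}$ with $x \in \Z_3^{n-1}$ contributes at least one point to $U$, and since $U$ has induced degree at most $1$, no such line can contribute all three points (otherwise the middle point would have degree $2$). Hence each of the $3^{n-1}$ lines contributes either one or two points of $U$, and the total count $3^{n-1}+1$ forces exactly one line to contribute two, which by definition lies above the extra point $x_U$. Writing $H = \Z_3 \times H_{red}$ with $|H_{red}| = 3^{k-1}$ and interpreting ``$H$ contains the extra point'' as $x_U \in H_{red}$, the restriction $U \cap H$ consists of one two-point line together with $3^{k-1}-1$ one-point lines, giving $|U \cap H| = 3^{k-1}+1$, as required.

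I do not anticipate a genuine obstacle; the argument is essentially careful bookkeeping. The one potentially delicate point is ensuring that restricting $I$ to $H$ still yields an honest hyperplane of $H$ rather than a trivial or degenerate affine set, and this is exactly where the hypothesis that $H$ contains direction $e_1$ is used: it guarantees that the coefficient $b_1 = 1$ on the free coordinate $x_1$ survives the restriction, so the restricted equation remains nontrivial on $H$.
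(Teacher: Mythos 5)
Your proof is correct, but it takes a genuinely different route from the paper. The paper proves this corollary by induction on $n$: it uses the popular direction $i \neq 1$ of the canonical set to split $U$ into two copies of an independent set and a smaller canonical set, then case-splits on whether $H$ contains $e_i$ and invokes the inductive hypothesis. You instead verify the three defining conditions of a canonical set directly, with no induction: the degree bound is inherited; disjointness from a maximum independent set of $H$ follows by restricting the hyperplane $I$ to $H$, where the hypothesis $e_1 \in H$ (together with $b_1 = 1$ from \propref{coset_char}) guarantees the restricted equation stays nontrivial and cuts out a set of size $3^{k-1}$; and the size count follows because \1sat{}ness plus induced degree $1$ force every line in direction $e_1$ to carry exactly one or two points of $U$, with the unique doubled line sitting over the extra point $x_U \in H_{red}$, giving $|U \cap H| = 3^{k-1}+1$. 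Your argument is arguably more self-contained and makes transparent exactly where both hypotheses ($e_1 \in H$ and $x_U \in H_{red}$) are used; the paper's inductive decomposition is less direct here but sets up the recursive structure that is reused immediately afterwards in \lemmaref{canonical_path}. One small point of bookkeeping: your line-counting step implicitly uses that the line over $x_U$ is the \emph{only} line with two points, which amounts to $|U| = 3^{n-1}+1$ exactly; you correctly source this from \thmref{uniqueDn} (or equivalently from the well-definedness of the extra point), and since the corollary appears after that theorem there is no circularity.
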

\begin{proof}
We prove this statement by induction. The base case $n = 1$ is trivial. For the inductive step, assume that the result is true when $n = k$ and consider the case when $n = k+1$.

Since $U$ is a \1sat canonical set, there is a popular direction $i \in [k+1]$ for $U$ which is not $1$. Let $U_0 = \{x \in U: x_i = 0\}$, $U_1 = \{x \in U: x_i = 1\}$, and $U_2 = \{x \in U: x_i = 2\}$. Similarly, let $H_0 = \{x \in H: x_i = 0\}$, $H_1 = \{x \in H: x_i = 1\}$, and $H_2 = \{x \in H: x_i = 2\}$. We must have that when we ignore coordinate $i$, two of $U_0$, $U_1$, and $U_2$ are copies of the same independent set $I$ while the third is a canonical set which is disjoint from $I$. Without loss of generality, we can assume that when we ignore coordinate $i$, $U_1 = U_2 = I$ and $U_0$ is a canonical set which is disjoint from $I$. There are two cases to consider.
\begin{enumerate}
\item $H$ contains the direction $e_i$. In this case, $H_0$ contains the extra point of $U$ which is also the extra point of $U_0$. By the inductive hypothesis, $U_0 \cap H_0$ is a canonical set for $H_0$. When we ignore coordinate $i$, $U_1 = U_2 = I$ and $U_0 \cap I = \phi$ so we must have that $U_1 \cap H_1$ and $U_2 \cap H_2$ are two copies of the same independent set which are disjoint from $U_0 \cap H_0$. This implies that $U \cap H$ is a canonical set.
\item $H$ does not contain the direction $e_i$. In this case, since $H$ contains the extra point of $U$, we must have that $H = H_0$. Since $H = H_0$ contains the extra point of $U_0$ and $U_0$ is a canonical set, by the inductive hypothesis, $U \cap H = U_0 \cap H_0$ is a canonical set.
\end{enumerate}
Thus, in both cases we have that $U \cap H$ is a canonical set, as needed.
\end{proof}

\subsection{Canonical Paths}
For our analysis, it is useful to define canonical paths on $U_f$. We will show that if $U$ is a \1sat subset of $\Z_3^n$ which induces a subgraph of maximum degree $1$ then for each $x \in \Z_3^{n-1}$ such that $U_f(x) \in \{X,Y,Z\}$, there must be a large number of $x' \in \Z_3^{n-1}$ such that the canonical path starting at $x'$ ends at $x$. This implies that there cannot be too many $x \in \Z_3^{n-1}$ such that $U_f(x) \in \{X,Y,Z\}$.
\begin{Def}
Let $U$ be a \1sat subset of $\Z_3^n$ which induces a subgraph of maximum degree at most $1$ and write $U = \cup_{x \in \Z_3^{n-1}}{(U_f(x) \times \{x\})}$. For each $x \in \Z_3^{n-1}$, we define the canonical path $P_x$ starting at $x$ as follows.

If there is a direction $i \in [n-1]$ such that $U_f(x + e_i) = U_f(x)$ or $U_f(x + 2e_i) = U_f(x)$ then we do the following:
\begin{enumerate}
\item If $U_f(x + e_i) = U_f(x)$ then we take the point $y = x + 2e_i$ and then take the canonical path $P_y$ starting from $y$.
\item If $U_f(x + 2e_i) = U_f(x)$ then we take the point $y = x + e_i$ and then take the canonical path $P_y$ starting from $y$.
\end{enumerate}

If there is no direction $i \in [n-1]$ such that $U_f(x + e_i) = U_f(x)$ or $U_f(x + 2e_i) = U_f(x)$ then we end $P_x$ at $x$.
\end{Def}
A key observation for our upper bound in Section \ref{section:upper} is that on \1sat canonical sets $U$, all canonical paths end at the extra point of $U$.
\begin{Lemma}
For all $n \in \mathbb{N}$, writing $D_n = \{D_{n,f}(x) \times x : x \in \Z_3^{n-1}\}$, we have that for all $x \in \Z_3^{n-1}$, the canonical path $P_x$ starting at $x$ ends at the extra point $x_{D_n} = 0^{n-1}$ of $D_n$.
\end{Lemma}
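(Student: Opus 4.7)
The plan is to induct on $n$, leveraging the recursive structure $D_{n+1} = (D_n, 0) \cup (A_n, 1) \cup (A_n, 2)$. The base case $n=1$ is trivial since $\Z_3^0$ is a single point and there are no directions to step along; $n=2$ is an easy direct verification using $D_{2,f}(0)=X$ and $D_{2,f}(1)=D_{2,f}(2)=A$.

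For the inductive step, write a point of $\Z_3^n$ as $(x', x_n)$ with $x' \in \Z_3^{n-1}$; the recursion yields $D_{n+1,f}(x',0) = D_{n,f}(x')$ and $D_{n+1,f}(x',1) = D_{n+1,f}(x',2) = A_{n,f}(x')$. Two facts about $A_n$ drive the whole argument. First, because $A_n = \{y \in \Z_3^n : \sum_j y_j = 0\}$ is an independent set, changing any single coordinate of $x'$ changes $\sum_j y_j$ modulo $3$ and hence changes $A_{n,f}(x')$; in particular, for every $i \in [n-1]$ and $s \in \{1,2\}$, $A_{n,f}(x' + s e_i) \neq A_{n,f}(x')$. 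Second, the disjointness $A_n \cap D_n = \phi$ forces $A_{n,f}(x') \notin D_{n,f}(x')$ as subsets of $\Z_3$, which in turn forces $A_{n,f}(x') \neq D_{n,f}(x')$ as elements of $\{\phi, A, B, C, X, Y, Z\}$ (the singleton $A_{n,f}(x')$ is either a different singleton from or not equal to the $1$- or $2$-element set $D_{n,f}(x')$).

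These two facts let me classify every canonical step. From $(x',1)$ or $(x',2)$, no direction $i<n$ offers a valid step (by the first fact), while direction $e_n$ offers exactly one valid step, namely to $(x',0)$; rule $1$ applies from $(x',1)$, rule $2$ applies from $(x',2)$, and in each case the alternative sub-rule is blocked by the second fact. From $(x',0)$, direction $e_n$ is entirely blocked by the second fact, and every valid step in a direction $i<n$ matches exactly a canonical step in $D_n$ from $x'$. Consequently, the canonical path from any starting point first (if necessary) drops in one step into the slice $x_n = 0$, and then traces the canonical path of $D_n$ from $x'$. By the inductive hypothesis, this path terminates at $(0,\ldots,0) \in \Z_3^{n-1}$, which corresponds to $(0,\ldots,0) \in \Z_3^n$, the extra point $x_{D_{n+1}}$ of $D_{n+1}$. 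A final check, using the same two facts together with the inductive statement applied at the extra point of $D_n$, confirms that $(0,\ldots,0)$ is itself terminal in $D_{n+1}$.

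I do not anticipate a substantive obstacle. The one point requiring care is the uniqueness of the canonical step out of $(x',1)$ and $(x',2)$, since the two rules in direction $e_n$ might conceivably both apply; this is precisely where the disjointness $A_n \cap D_n = \phi$ is used to exclude the alternative sub-rule.
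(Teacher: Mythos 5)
Your proof is correct and follows essentially the same route as the paper's: induction on $n$ via the recursion $D_{n+1} = (D_n,0)\cup(A_n,1)\cup(A_n,2)$, observing that from the $A_n$ slices the unique canonical step drops into the $D_n$ slice and that within that slice the path coincides with the canonical path of $D_n$. The paper states these two facts in one line each; your write-up just supplies the justifications (independence of $A_n$ and $A_n \cap D_n = \phi$) that the paper leaves implicit.
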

\begin{proof}
    We prove this statement by induction. The base case $n = 1$ is trivial. For the inductive step, assume that the result holds for $D_n$ and consider $D_{n+1} = $
\begin{tabular}{|c|c|c|}
 \hline
  $D_{n}$ & $A_{n}$ & $A_{n}$ \\
 \hline
\end{tabular}. If $x \in D_n$ then the result follows from the inductive hypothesis. If $x$ is in one of the copies of $A_n$ then the next point $y$ will be in $D_n$ so the result follows from the inductive hypothesis.
\end{proof}
\begin{Eg}
Observe that 
\[
D_4 = \begin{tabular}{|c|c|c|}
    \hline
    $D_3$ & $A_3$ & $A_3$\\
    \hline
\end{tabular}
= 
\begin{tabular}{|c|c|c|}
 \hline
 $X$ & $A$ & $A$ \\
 \hline
 $A$ & $C$ & $B$ \\
 \hline
 $A$ & $C$ & $B$ \\
 \hline
\end{tabular} \ 
\begin{tabular}{|c|c|c|}
 \hline
 $A$ & $C$ & $B$ \\
 \hline
 $C$ & $B$ & $A$ \\
 \hline
 $B$ & $A$ & $C$ \\
 \hline
\end{tabular} \ 
\begin{tabular}{|c|c|c|}
 \hline
 $A$ & $C$ & $B$ \\
 \hline
 $C$ & $B$ & $A$ \\
 \hline
 $B$ & $A$ & $C$ \\
 \hline
\end{tabular} \ 
\]
If we start with the $A$ at $x = (2,1,2)$ (the middle right point of the right block), the next points of $P_x$ are the $B$ at $(2,1,0)$, the $A$ at $(2,0,0)$, and the $X$ at $(0,0,0)$, i.e., the middle right, upper right, and upper left points of the left block.
\end{Eg}

\begin{Def}
Given an affine subset $H$ of $\Z_3^n$ which contains direction $e_1$, we define $H_{red}$ to be the affine subset of $\Z_3^n$ such that $H = \Z_3 \times H_{red}$.
\end{Def}

\begin{Lemma}
\label{canonical_path}
Let $U$ be a \1sat subset of $\Z_3^{n}$ which has induced degree at most $1$ 
and let $H' = \Z_3 \times H'_{red}$ and $H'' = \Z_3 \times H''_{red}$ be two affine subsets of $\Z_3^{n}$ containing the direction $e_1$. Taking $U' = U \cap H'$ and $U'' = U \cap H''$ and writing $U' = \cup_{x \in H'_{red}}{(U'_f(x) \times \{x\})}$ and $U'' = \cup_{x \in H''_{red}}{(U''_f(x) \times \{x\})}$,
\begin{enumerate}
    \item If $U'$ is a canonical set for $H'$ then for all $x \in H'_{red}$, the canonical path $P_x$ starting at $x$ ends at the extra point $x_{U'}$ of $U'$.
    \item If $U'$ is a canonical set for $H'$, $U''$ is a canonical set for $H''$, and $H' \cap H'' \neq \phi$ then $x_{U'} = x_{U''}$ and $U' \cap U''$ is a canonical set for $H' \cap H''$.
\end{enumerate}
\end{Lemma}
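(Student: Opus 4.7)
The strategy is to first prove part 1 by leveraging the classification of canonical sets (\thmref{uniqueDn}), then derive part 2(a) from part 1 applied to both $H'$ and $H''$, and finally use \corref{cor:canonicalsetrestriction} for part 2(b).

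For part 1, note first that because $U$ is \1sat and $H'$ contains direction $e_1$, every $e_1$-line of $H'$ meets $U'$, so $U'$ is itself \1sat in $H'$ and $U'_f$ is defined on all of $H'_{red}$. By \thmref{uniqueDn}, there exists an isomorphism of the Hamming graph on $H'$ carrying $U'$ to $D_{k'}$, where $k' = \dim H'$. The \1sat condition on $U'$ lets us choose this isomorphism to preserve the $e_1$-direction, producing a bijection $H'_{red} \to \Z_3^{k'-1}$ that intertwines $U'_f$ with $D_{k', f}$ up to a relabeling of $\{A, B, C, X, Y, Z\}$. Since the canonical-path rule depends only on the pattern of coincidences of adjacent $U_f$ values, the previously established fact that canonical paths in $D_{k'}$ end at $(0, \ldots, 0)$ transports across this correspondence to show that $P_x$ terminates at $x_{U'}$.

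For part 2(a), both $H'$ and $H''$ contain direction $e_1$, so $H' \cap H'' = \Z_3 \times (H'_{red} \cap H''_{red})$; the hypothesis $H' \cap H'' \neq \phi$ then gives $H'_{red} \cap H''_{red} \neq \phi$. Pick any $z$ in this intersection and consider a canonical path $P_z$. By part 1 applied to $H'$, $P_z$ terminates at $x_{U'}$; by part 1 applied to $H''$, the same path terminates at $x_{U''}$. Hence $x_{U'} = x_{U''}$. For part 2(b), the common extra point lies in $(H' \cap H'')_{red}$, so $H' \cap H''$ viewed as an affine subset of $H'$ contains both the direction $e_1$ and the extra point of $U'$. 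Applying \corref{cor:canonicalsetrestriction} inside $H'$ to the canonical set $U'$ and this affine subset yields that $U' \cap (H' \cap H'') = U' \cap U''$ is a canonical set for $H' \cap H''$, as required.

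The main subtlety is in part 1: the canonical path $P_x$ is defined via the global function $U_f$ on $\Z_3^{n-1}$, whereas the transported statement from $D_{k'}$ naturally concerns a path within $H'_{red}$. Because the movement rule at each step depends only on equalities of $U_f$ at the current point and its neighbors, and because $U'_f$ captures these values throughout $H'_{red}$, one must check that the path never leaves $H'_{red}$ in the first place---the correspondence with $D_{k'}$ ensures that an in-$H'_{red}$ same-value neighbor exists at every non-terminal step, so the $D_{k'}$ termination lemma applies directly.
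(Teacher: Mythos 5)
Your proof is correct, and part 2 matches the paper's argument exactly (pick a point of $H'_{red}\cap H''_{red}$, observe its canonical path must end at both extra points, then apply Corollary \ref{cor:canonicalsetrestriction}). For part 1, however, you take a genuinely different route: the paper runs a fresh induction on $n$, slicing $U'$ along its popular direction into a smaller canonical set plus two identical independent sets and re-deriving the path behaviour in the relative setting, whereas you transport the already-proved lemma that all canonical paths on $D_{k'}$ end at the origin through the isomorphism supplied by Theorem \ref{uniqueDn}. Your route is arguably more economical since it reuses the $D_n$ path lemma rather than reproving it, but it carries two obligations the paper's induction avoids: (i) that the isomorphism can be normalized to send the $e_1$-direction of $H'$ to the $e_1$-direction of $D_{k'}$ --- this follows from the corollary that $D_{k'}$ is only $1$- and $2$-saturated together with the (easily checked, but unstated) fact that $D_{k'}$ is invariant under swapping coordinates $1$ and $2$; and (ii) that the globally defined path $P_x$ agrees with the transported path inside $H'_{red}$. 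You correctly identify (ii) as the main subtlety, and your resolution is sound: at a non-extra point the $D_{k'}$ correspondence supplies a same-value neighbour inside $H'_{red}$, induced degree $1$ forces that neighbour to be the unique same-value neighbour in all of $\Z_3^{n-1}$, and the step away from it stays on a line of $H'_{red}$; at the extra point the value lies in $\{X,Y,Z\}$, so no same-value neighbour can exist anywhere and the path terminates there.
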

\begin{proof}
We prove the first statement by induction. For the first statement, the base case $n=1$ is trivial. For the inductive step, assume that the first statement is true when $n \leq k$ and consider the case when $U \subseteq \Z_3^{k+1}$.

Observe that if $U'$ is a canonical set, since $U$ and thus $U'$ are \1sat, there is a popular direction $i \in [k+1]$ for $U'$ which is not $1$. Let $U'_0 = \{x \in U': x_i = 0\}$, $U'_1 = \{x \in U': x_i = 1\}$, and $U'_2 = \{x \in U': x_i = 2\}$. Observe that $U'_0 = U_0 \cap H'_0$, $U'_1 = U_1 \cap H'_1$, and $U'_2 = U_2 \cap H'_2$ where $U_0 = \{x \in U: x_i = 0\}$, $U_1 = \{x \in U: x_i = 1\}$, $U_2 = \{x \in U: x_i = 2\}$, $H'_0 = \{x \in H': x_i = 0\}$, $H'_1 = \{x \in H': x_i = 1\}$, and $H'_2 = \{x \in H': x_i = 2\}$.

We must have that when we ignore coordinate $i$, two of $U'_0$, $U'_1$, and $U'_2$ are copies of the same independent set $I$ while the third is a canonical set which is disjoint from $I$. Without loss of generality, we can assume that when we ignore coordinate $i$, $U'_1 = U'_2 = I$ and $U'_0$ is a canonical set which is disjoint from $I$. By the inductive hypothesis, $(H'_0)_{red}$ contains the extra point $x_{U'}$ of $U'$ and all canonical paths starting in $(H'_0)_{red}$ end at $x_{U'}$. Since all canonical paths starting in $(H'_1)_{red}$ or $(H'_2)_{red}$ reach $U'_0$ after their first step, all canonical paths starting in $H'_{red}$ must end at $x_{U'}$.

We now show the second statement. Observe that by the first statement, for all $x \in H'_{red}$, $P_x$ ends at $x_{U'}$. Similarly, for all $x \in H''_{red}$, $P_x$ ends at $x_{U''}$. Thus, we must have that $x_{U'} = x_{U''}$. Since $x_{U'} \in H'_{red}$ and $x_{U''} \in H''_{red}$, $x_{U'} = x_{U''} \in (H' \cap H'')_{red}$. By Corollary \ref{cor:canonicalsetrestriction}, $U' \cap U'' = U' \cap (H' \cap H'') = U'' \cap (H' \cap H'')$ is a canonical set for $H' \cap H''$.
\end{proof}
\begin{remark}
The fact that $U' \cap U''$ is a canonical set for $H' \cap H''$ depends on the fact that $U$ is \1sat and $H$ and $H'$ contain the direction $e_1$. For an example where two canonical sets have an intersection which is not a canonical set, see Appendix \ref{sec:strangeintersection}.
\end{remark}

\section{Size lower bound}
\label{section:lower}
Recall that $\alpha(H(n,3)) = 3^{n-1}$ is the size of the largest independent set of $H(n,3)$. For $n = 1$ and $2$, the largest induced degree 1 subset of $\Z_3^n$ has size $\alpha(H(n,3))+1$ and these subsets are unique up to isomorphism. For $n = 3$, it is not hard to show that the largest induced degree 1 subset still has size $\alpha(H(n,3))+1$ but there are two non-isomorphic subsets. For $n = 4$, it is possible to have an induced degree 1 subset of size strictly greater than $\alpha(H(n,3))+1$. The following set has size $\alpha(H(n,3))+2 = 29$.

 \[
    \begin{tabular}{c}
    
    \begin{tabular}{|c|c|c|}
        \hline
        \ \ \  & $\bullet$ & $\bullet$ \\
        \hline
        $\bullet$ & \ \ \  & \\
        \hline
        $\bullet$ &  & \ \ \ \\
        \hline
    \end{tabular} \ 
    \begin{tabular}{|c|c|c|}
        \hline
        $\bullet$ & \ \ \  &  \\
        \hline
         & $\bullet$ & \ \ \ \\
        \hline
        \ \ \  &  & $\bullet$ \\
        \hline
    \end{tabular} \ 
    \begin{tabular}{|c|c|c|}
        \hline
        $\bullet$ & \ \ \  &  \\
        \hline
        \ \ \  &  & $\bullet$ \\
        \hline
         & $\bullet$ & \ \ \ \\
        \hline
    \end{tabular} \\ 
    \\
    \begin{tabular}{|c|c|c|}
        \hline
        $\bullet$ & \ \ \  &  \\
        \hline
         & $\bullet$ & \ \ \ \\
        \hline
        \ \ \  &  & $\bullet$ \\
        \hline
    \end{tabular} \ 
    \begin{tabular}{|c|c|c|}
        \hline
        \ \ \  & $\bullet$ &  \\
        \hline
        $\bullet$ & \ \ \  & $\bullet$\\
        \hline
         & $\bullet$ & \ \ \ \\
        \hline
    \end{tabular} \ 
    \begin{tabular}{|c|c|c|}
        \hline
        \ \ \  &  & $\bullet$ \\
        \hline
         & $\bullet$ & \ \ \ \\
        \hline
        $\bullet$ & \ \ \  &  \\
        \hline
    \end{tabular} \\
    \\
    \begin{tabular}{|c|c|c|}
        \hline
        $\bullet$ & \ \ \  &  \\
        \hline
        \ \ \  &  & $\bullet$ \\
        \hline
         & $\bullet$ & \ \ \ \\
        \hline
    \end{tabular} \ 
    \begin{tabular}{|c|c|c|}
        \hline
        \ \ \  &  & $\bullet$ \\
        \hline
         & $\bullet$ & \ \ \ \\
        \hline
        $\bullet$ & \ \ \  &  \\
        \hline
    \end{tabular} \ 
    \begin{tabular}{|c|c|c|}
        \hline
         \ \ \  & $\bullet$  & \ \ \ \\
        \hline
         $\bullet$  & \ \ \ & \ \ \ \\
        \hline
         \ \ \  &  \ \ \  & $\bullet$ \\
        \hline
    \end{tabular}
    
    \end{tabular}
\]

It can be shown that not only is this set the largest possible subset of $\Z_3^4$ with induced degree 1, but it is also the only one up to isomorphism (see Appendix \ref{unique4}). This set demonstrates some interesting properties. Denote the set as $X_4$ and let $N(x)$ denote the set of neighbors of $x$: 
\begin{enumerate}
\item There exists a subset $O \subseteq \Z_3^4$ such that $\cup_{x \in O} N(x) \subseteq X_4$. Moreover, the only vertices with degree 1 in the induced subgraph are the vertices in $\cup_{x \in O} N(x)$. Note that for $X_4$, $O = \{(0,0,0,0),(1,1,1,1)\}$.
\item $X_4$ is $i$-saturated for all $i \in [4]$.
\end{enumerate}
Almost all of the extremal subsets for $n \le 4$ satisfy both properties, with $D_3$ the canonical set of dimension 3 being the only exception. $D_3$ satisfies neither of the two properties, but it satisfies a weaker form of property 2: it is $i$-saturated for some $i \in [3]$.

All the extremal subsets for $n \le 4$ are $i$-saturated for some $i$. By permutating the coordinates if needed, we can assume without loss of generality that they are $1$-saturated and hence we can represent them by functions $U_f : \Z_3^n \to \{A, B, C, X, Y, Z\}$ such that $U = \cup_{x \in \Z_3^{n-1}}{(U_f(x) \times \{x\})}$. The extremal subset $X_4$ for $\Z_3^4$ can be written as follows
\[
\begin{tabular}{|c|c|c|}
    \hline
    $X$ & $A$ & $A$\\
    \hline
    $A$ & $B$ & $C$\\
    \hline
    $A$ & $C$ & $B$\\
    \hline
\end{tabular} \ 
\begin{tabular}{|c|c|c|}
    \hline
    $A$ & $B$ & $C$\\
    \hline
    $B$ & $Y$ & $B$\\
    \hline
    $C$ & $B$ & $A$\\
    \hline
\end{tabular} \ 
\begin{tabular}{|c|c|c|}
    \hline
    $A$ & $C$ & $B$\\
    \hline
    $C$ & $B$ & $A$\\
    \hline
    $B$ & $A$ & $C$\\
    \hline
\end{tabular}.
\]
\subsection{Finding subsets via SAT solvers}
By Proposition \ref{ABCXYZcharacterization}, subsets $U$ with induced degree at most 1 can be characterized by a set of conditions on the adjacent blocks. If in addition $U$ is also 1-saturated, then it can be characterized as a solution to a certain CNF formula. Specifically, Proposition \ref{ABCXYZcharacterization} can be rephrased as the following fact regarding the function representation.

\begin{Prop}
    \label{constraints}
    Let $U_f : \Z_3^{n-1} \to \{A, B, C, X, Y, Z\}$ and $U = \cup_{x \in \Z_3^{n-1}}{(U_f(x) \times \{x\})}$. Then $U$ has induced degree 1 if and only if for all $x \in \Z_3^{n-1}$ and distinct $y, z \in N(x)$, $U_f$ satisfies the following constraints:
    \begin{enumerate}
        \item $U_f(x), U_f(y)$ and $U_f(z)$ are not all equal.

        \item If $U_f(x) = X$, then $U_f(y) = A$.

        \item If $U_f(x) = Y$, then $U_f(y) = B$.

        \item If $U_f(x) = Z$, then $U_f(y) = C$.
    \end{enumerate}
\end{Prop}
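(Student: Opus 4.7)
The plan is to reduce the claim to a degree computation at an arbitrary vertex of $U$. For $(c, x) \in U$ (meaning $c \in U_f(x)$), the neighbors of $(c, x)$ in $H(n, 3)$ split into the two line-direction neighbors $(c', x)$ for $c' \in \Z_3 \setminus \{c\}$, and the block-direction neighbors $(c, y)$ for $y \in N(x)$ (the Hamming neighbors of $x$ in $\Z_3^{n-1}$). Among these, the ones actually lying in $U$ are precisely the elements of $U_f(x) \setminus \{c\}$ in the line direction and $\{y \in N(x) : c \in U_f(y)\}$ in the block directions. So the degree of $(c, x)$ in $U$ is
\[
|U_f(x) \setminus \{c\}| \;+\; |\{y \in N(x) : c \in U_f(y)\}|,
\]
and the task reduces to controlling these two counts.

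For the forward direction, I would prove the contrapositive: if any of the four conditions fails, exhibit a vertex of $U$ with at least two neighbors in $U$. A failure of Condition 1 with common type $T$ at $x$ and two distinct $y, z \in N(x)$ immediately yields three points $(c, x), (c, y), (c, z) \in U$ for any $c \in T$, and the middle one has degree $\geq 2$. A failure of Condition 2 gives $U_f(x) = X$ and some $y \in N(x)$ with $U_f(y) \neq A$; then $U_f(y)$ contains $1$ or $2$, say $1$, and $(1, x)$ has the line neighbor $(2, x) \in U$ together with the block neighbor $(1, y) \in U$. Conditions 3 and 4 are handled symmetrically.

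For the reverse direction, I would assume the four conditions hold and check the degree of an arbitrary $(c, x) \in U$, splitting on $U_f(x)$. If $U_f(x) \in \{X, Y, Z\}$, say $U_f(x) = X$, then the line contribution is exactly $1$, and Condition 2 forces $U_f(y) = A = \{0\}$ for every $y \in N(x)$, so $c \in \{1,2\}$ never lies in $U_f(y)$ and the block contribution is $0$. If $U_f(x) \in \{A, B, C\}$, say $U_f(x) = A$ so $c = 0$, the line contribution is $0$, and I need the block contribution to be at most $1$. The key step is to show that any $y \in N(x)$ with $c \in U_f(y)$ must satisfy $U_f(y) = A$: a priori $U_f(y) \in \{A, Y, Z\}$, but $U_f(y) = Y$ would force $U_f(x) = B$ by Condition 3 applied at $y$, and $U_f(y) = Z$ would force $U_f(x) = C$ by Condition 4, both absurd. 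Hence the block neighbors all carry type $A$, and having two such would violate Condition 1.

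The proof is essentially a routine translation between the two formulations of Proposition \ref{ABCXYZcharacterization}; the only step that requires real thought is the observation in the last paragraph, where Conditions 2--4 are needed to rule out stealthy contributions from two-element blocks at neighbors of a singleton block. Once that is in hand, the whole argument collapses into a short case analysis on the value of $U_f(x)$.
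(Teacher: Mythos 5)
Your proof is correct. The paper does not actually supply an argument for this proposition---it presents it as a direct rephrasing of \propref{ABCXYZcharacterization} (which is itself stated without proof)---so your degree count
\[
\deg_U\bigl((c,x)\bigr) \;=\; |U_f(x)\setminus\{c\}| \;+\; |\{y \in N(x) : c \in U_f(y)\}|
\]
together with the case analysis on $U_f(x)$ simply supplies the routine verification the authors omit. The one step that genuinely needs the observation you highlight---applying Conditions 3 and 4 \emph{at the neighbor $y$} to rule out $U_f(y)\in\{Y,Z\}$ when $U_f(x)=A$---is handled correctly, and the contrapositive argument for the forward direction is complete.
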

To construct our SAT instance, we create a variable $v_{x, E}$ for each $x \in \Z_3^{n-1}$ and $E \in \{A, B, C, X, Y, Z\}$ that indicates if $U_f(x) = E$. We have the following formula for each $x \in \Z_3^{n-1}$.
\[
Assign_x(v) = \Big(\bigvee_{E \in \{A, B, C, X, Y, Z\}}v_{x, E} \Big) \wedge \Big(\bigwedge_{E, F \in \{A, B, C, X, Y, Z\}, E \ne F} \neg v_{x,E} \vee \neg v_{x, F} \Big).
\]

For each $x \in \Z_3^{n-1}$ and distinct $y, z \in N(x)$, the first constraint in Proposition \ref{constraints} can be represented by the CNF
\[
NAE_{x, y, z}(v) = \bigwedge_{E \in \{A, B, C, X, Y, Z\}} (\neg v_{x, E} \vee \neg v_{y, E} \vee \neg v_{z, E}).
\]
The remaining constraints can be represented by 
\[
Disj_{x, y}(v) = (\neg v_{x, X} \vee v_{y, A}) \wedge (\neg v_{x, Y} \vee v_{y, B}) \wedge (\neg v_{x, Z} \vee v_{y, C}).
\]
Thus $U_f$ corresponds to a satisfiable assignment to
\[
\bigwedge_{x \in \Z_3^{n-1}, y, z \in N(x), y \ne z} Assign_x(v) \wedge NAE_{x, y, z}(v) \wedge Disj_{x, y}(v).
\]

Note that we do not try to minimize the size of the formula and there are redundant clauses.

\subsection{Examples of subsets with 6 and 18 extra points}
With the help of a SAT solver, we found an induced degree 1 subset in $\Z_3^5$ of size $\alpha(H(5, 3))+6$ and an induced degree 1 subset in $\Z_3^6$ of size $\alpha(H(6, 3))+18$. We illustrate them below in terms of $U_f$ using $A, B, C, X, Y, Z$ blocks. Note that each block with a value in $\{X, Y, Z\}$ (i.e., values in the the gray cells) contributes one additional point to the set. We say that a subset of $\Z_3^n$ has $K$ extra points if $|U|-\alpha(H(n, 3)) = K$. For \1sat subsets the number of extra points is precisely the number of $X, Y, Z$ blocks.\\

\noindent
\textbf{Example of 6 extra points in $\Z_3^5$.}
\[
\begin{tabular}{c}
    \begin{tabular}{|c|c|c|}
        \hline
        \cellcolor{lightgray}$X$ & $A$ & $A$\\
        \hline
        $A$ & $C$ & $B$\\
        \hline
        $A$ & $B$ & $C$\\
        \hline
    \end{tabular} \ 
    \begin{tabular}{|c|c|c|}
        \hline
        $A$ & $B$ & $C$\\
        \hline
        $B$ & $C$ & $A$\\
        \hline
        $C$ & $A$ & $B$\\
        \hline
    \end{tabular} \ 
    \begin{tabular}{|c|c|c|}
        \hline
        $A$ & $C$ & $B$\\
        \hline
        $C$ & \cellcolor{lightgray}$Z$ & $C$\\
        \hline
        $B$ & $C$ & $A$\\
        \hline
    \end{tabular}\\
    \\
    \begin{tabular}{|c|c|c|}
        \hline
        $A$ & $B$ & $C$\\
        \hline
        $C$ & $A$ & $B$\\
        \hline
        $B$ & $C$ & $A$\\
        \hline
    \end{tabular} \ 
    \begin{tabular}{|c|c|c|}
        \hline
        $B$ & \cellcolor{lightgray}$Y$ & $B$\\
        \hline
        $A$ & $B$ & $C$\\
        \hline
        $C$ & $B$ & $A$\\
        \hline
    \end{tabular} \ 
    \begin{tabular}{|c|c|c|}
        \hline
        $C$ & $B$ & $A$\\
        \hline
        $B$ & $C$ & $A$\\
        \hline
        $A$ & $A$ & \cellcolor{lightgray}$X$\\
        \hline
    \end{tabular}\\
    \\
    \begin{tabular}{|c|c|c|}
        \hline
        $A$ & $C$ & $B$\\
        \hline
        $B$ & $B$ & \cellcolor{lightgray}$Y$\\
        \hline
        $C$ & $A$ & $B$\\
        \hline
    \end{tabular} \ 
    \begin{tabular}{|c|c|c|}
        \hline
        $C$ & $B$ & $A$\\
        \hline
        $C$ & $A$ & $B$\\
        \hline
        \cellcolor{lightgray}$Z$ & $C$ & $C$\\
        \hline
    \end{tabular} \ 
    \begin{tabular}{|c|c|c|}
        \hline
        $B$ & $A$ & $C$\\
        \hline
        $A$ & $C$ & $B$\\
        \hline
        $C$ & $B$ & $A$\\
        \hline
    \end{tabular}
    
    \end{tabular}
\]

\noindent
\textbf{Example of 18 extra points in $\Z_3^6$.}
\[
    \begin{tabular}{c c}
    \begin{tabular}{c}
    \begin{tabular}{|c|c|c|}
        \hline
        \cellcolor{lightgray}$X$ & $A$ & $A$\\
        \hline
        $A$ & $C$ & $B$\\
        \hline
        $A$ & $B$ & $C$\\
        \hline
    \end{tabular} \ 
    \begin{tabular}{|c|c|c|}
        \hline
        $A$ & $B$ & $C$\\
        \hline
        $B$ & $C$ & $A$\\
        \hline
        $C$ & $A$ & $B$\\
        \hline
    \end{tabular} \ 
    \begin{tabular}{|c|c|c|}
        \hline
        $A$ & $C$ & $B$\\
        \hline
        $C$ & \cellcolor{lightgray}$Z$ & $C$\\
        \hline
        $B$ & $C$ & $A$\\
        \hline
    \end{tabular}\\
    \\
    \begin{tabular}{|c|c|c|}
        \hline
        $A$ & $B$ & $C$\\
        \hline
        $C$ & $A$ & $B$\\
        \hline
        $B$ & $C$ & $A$\\
        \hline
    \end{tabular} \ 
    \begin{tabular}{|c|c|c|}
        \hline
        $B$ & \cellcolor{lightgray}$Y$ & $B$\\
        \hline
        $A$ & $B$ & $C$\\
        \hline
        $C$ & $B$ & $A$\\
        \hline
    \end{tabular} \ 
    \begin{tabular}{|c|c|c|}
        \hline
        $C$ & $B$ & $A$\\
        \hline
        $B$ & $C$ & $A$\\
        \hline
        $A$ & $A$ & \cellcolor{lightgray}$X$\\
        \hline
    \end{tabular}\\
    \\
    \begin{tabular}{|c|c|c|}
        \hline
        $A$ & $C$ & $B$\\
        \hline
        $B$ & $B$ & \cellcolor{lightgray}$Y$\\
        \hline
        $C$ & $A$ & $B$\\
        \hline
    \end{tabular} \ 
    \begin{tabular}{|c|c|c|}
        \hline
        $C$ & $B$ & $A$\\
        \hline
        $C$ & $A$ & $B$\\
        \hline
        \cellcolor{lightgray}$Z$ & $C$ & $C$\\
        \hline
    \end{tabular} \ 
    \begin{tabular}{|c|c|c|}
        \hline
        $B$ & $A$ & $C$\\
        \hline
        $A$ & $C$ & $B$\\
        \hline
        $C$ & $B$ & $A$\\
        \hline
    \end{tabular}
    \end{tabular}
    &
    \begin{tabular}{c}
    \begin{tabular}{|c|c|c|}
        \hline
        $A$ & $B$ & $C$\\
        \hline
        $B$ & $A$ & $C$\\
        \hline
        $C$ & $C$ & \cellcolor{lightgray}$Z$\\
        \hline
    \end{tabular} \ 
    \begin{tabular}{|c|c|c|}
        \hline
        $B$ & $C$ & $A$\\
        \hline
        \cellcolor{lightgray}$Y$ & $B$ & $B$\\
        \hline
        $B$ & $A$ & $C$\\
        \hline
    \end{tabular} \ 
    \begin{tabular}{|c|c|c|}
        \hline
        $C$ & $A$ & $B$\\
        \hline
        $B$ & $C$ & $A$\\
        \hline
        $A$ & $B$ & $C$\\
        \hline
    \end{tabular}\\
    \\
    \begin{tabular}{|c|c|c|}
        \hline
        $C$ & $A$ & $B$\\
        \hline
        $A$ & \cellcolor{lightgray}$X$ & $A$\\
        \hline
        $B$ & $A$ & $C$\\
        \hline
    \end{tabular} \ 
    \begin{tabular}{|c|c|c|}
        \hline
        $C$ & $B$ & $A$\\
        \hline
        $B$ & $A$ & $C$\\
        \hline
        $A$ & $C$ & $B$\\
        \hline
    \end{tabular} \ 
    \begin{tabular}{|c|c|c|}
        \hline
        \cellcolor{lightgray}$Z$ & $C$ & $C$\\
        \hline
        $C$ & $A$ & $B$\\
        \hline
        $C$ & $B$ & $A$\\
        \hline
    \end{tabular}\\
    \\
    \begin{tabular}{|c|c|c|}
        \hline
        $B$ & $C$ & $A$\\
        \hline
        $C$ & $A$ & $B$\\
        \hline
        $A$ & $B$ & $C$\\
        \hline
    \end{tabular} \ 
    \begin{tabular}{|c|c|c|}
        \hline
        $A$ & $A$ & \cellcolor{lightgray}$X$\\
        \hline
        $B$ & $C$ & $A$\\
        \hline
        $C$ & $B$ & $A$\\
        \hline
    \end{tabular} \ 
    \begin{tabular}{|c|c|c|}
        \hline
        $C$ & $B$ & $A$\\
        \hline
        $A$ & $B$ & $C$\\
        \hline
        $B$ & \cellcolor{lightgray}$Y$ & $B$\\
        \hline
    \end{tabular}
    \end{tabular}
    \end{tabular}
\]
\[
    \begin{tabular}{c}
    \begin{tabular}{|c|c|c|}
        \hline
        $A$ & $C$ & $B$\\
        \hline
        $C$ & $B$ & $A$\\
        \hline
        $B$ & $A$ & $C$\\
        \hline
    \end{tabular} \ 
    \begin{tabular}{|c|c|c|}
        \hline
        $C$ & $A$ & $B$\\
        \hline
        $B$ & $A$ & $C$\\
        \hline
        $A$ & \cellcolor{lightgray}$X$ & $A$\\
        \hline
    \end{tabular} \ 
    \begin{tabular}{|c|c|c|}
        \hline
        $B$ & $B$ & \cellcolor{lightgray}$Y$\\
        \hline
        $A$ & $C$ & $B$\\
        \hline
        $C$ & $A$ & $B$\\
        \hline
    \end{tabular}\\
    \\
    \begin{tabular}{|c|c|c|}
        \hline
        $B$ & $C$ & $A$\\
        \hline
        $B$ & $A$ & $C$\\
        \hline
        \cellcolor{lightgray}$Y$ & $B$ & $B$\\
        \hline
    \end{tabular} \ 
    \begin{tabular}{|c|c|c|}
        \hline
        $A$ & $B$ & $C$\\
        \hline
        $C$ & $C$ & \cellcolor{lightgray}$Z$\\
        \hline
        $B$ & $A$ & $C$\\
        \hline
    \end{tabular} \ 
    \begin{tabular}{|c|c|c|}
        \hline
        $C$ & $A$ & $B$\\
        \hline
        $A$ & $B$ & $C$\\
        \hline
        $B$ & $C$ & $A$\\
        \hline
    \end{tabular}\\
    \\
    \begin{tabular}{|c|c|c|}
        \hline
        $C$ & \cellcolor{lightgray}$Z$ & $C$\\
        \hline
        $A$ & $C$ & $B$\\
        \hline
        $B$ & $C$ & $A$\\
        \hline
    \end{tabular} \ 
    \begin{tabular}{|c|c|c|}
        \hline
        $B$ & $C$ & $A$\\
        \hline
        $A$ & $B$ & $C$\\
        \hline
        $C$ & $A$ & $B$\\
        \hline
    \end{tabular} \ 
    \begin{tabular}{|c|c|c|}
        \hline
        $A$ & $C$ & $B$\\
        \hline
        \cellcolor{lightgray}$X$ & $A$ & $A$\\
        \hline
        $A$ & $B$ & $C$\\
        \hline
    \end{tabular}
    \end{tabular}
\]\\

Each \1sat induced degree 1 subset $U$ has a natural extension to higher dimensions that  preserves the number of extra points. This allows us to generate an example with 18 extra points for all $n \ge 6$.

\begin{Lemma}
    \label{lift}
    Let $U \subseteq \Z_3^n$ be a set corresponding to $U_f : \Z_3^{n-1} \to \{A, B, C, X, Y, Z\}$. If $U$ has induced degree 1 then there exists $V \subseteq \Z_3^{n+1}$ such that $|V|-\alpha(H(n+1,3)) = |U|-\alpha(H(n,3))$ and $V$ has induced degree 1.
\end{Lemma}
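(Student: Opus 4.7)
The plan is to construct $V$ explicitly by specifying its \1sat representation $V_f : \Z_3^n \to \{A, B, C, X, Y, Z\}$, identifying $\Z_3^n \cong \Z_3^{n-1} \times \Z_3$ with the last coordinate being the new direction. For any \1sat set $W$ one has $|W| - \alpha = \#\{y : W_f(y) \in \{X, Y, Z\}\}$, so the size identity is equivalent to $V_f$ having the same number of $\{X, Y, Z\}$-blocks as $U_f$. I therefore set $V_f(y, 0) := U_f(y)$ for all $y \in \Z_3^{n-1}$ and fill the new slices with values in $\{A, B, C\}$, using two projections
\[
\sigma : A \mapsto B,\ B \mapsto C,\ C \mapsto A,\ X \mapsto A,\ Y \mapsto B,\ Z \mapsto C,
\]
\[
\tau : A \mapsto C,\ B \mapsto A,\ C \mapsto B,\ X \mapsto A,\ Y \mapsto B,\ Z \mapsto C,
\]
and setting $V_f(y, 1) := \sigma(U_f(y))$, $V_f(y, 2) := \tau(U_f(y))$. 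Since $\sigma, \tau$ land in $\{A, B, C\}$, the number of $\{X, Y, Z\}$-blocks of $V_f$ equals that of $U_f$, which yields $|V| - \alpha(H(n+1, 3)) = |U| - \alpha(H(n, 3))$.

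The substantive part is to check via \propref{ABCXYZcharacterization} that $V$ has induced degree $1$. I would split the checks into three groups. Within slice $z = 0$ the constraints are inherited directly from $U$. Within slices $z = 1, 2$, the needed facts are that $\sigma$ and $\tau$ take compatible pairs to compatible pairs (easy table check), and that every same-letter adjacency in the new slices lifts uniquely to a same-letter adjacency in $U$: for instance $\sigma^{-1}(A) = \{C, X\}$, but $(C, X)$ and $(X, X)$ are incompatible in $U$ because $X$ can only sit next to $A$, so any $A$-$A$ adjacency in slice $1$ comes from a $C$-$C$ adjacency in $U$, of which each block has at most one. The same preimage analysis shows no three collinear blocks in a new slice can all be equal (any all-$B$ triple would require $U_f$-values in $\{A, Y\}$ along a line, impossible). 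For vertical ($z$-direction) adjacencies and triples at a fixed $y$, one just reads off the six possible triples
\[
(A, B, C),\ (B, C, A),\ (C, A, B),\ (X, A, A),\ (Y, B, B),\ (Z, C, C),
\]
the first three producing no vertical edges and the last three producing exactly one edge between the slice-$1$ and slice-$2$ blocks.

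The main obstacle is the three cases $U_f(y) \in \{X, Y, Z\}$: the single vertical edge between slices $1$ and $2$ at column $y$ must be the only edge incident to those two vertices. The key observation is that \propref{ABCXYZcharacterization} forces every neighbor $y' \sim y$ in $\Z_3^{n-1}$ to satisfy $U_f(y') = A$ (resp.\ $B$, $C$) when $U_f(y) = X$ (resp.\ $Y$, $Z$), and then $V_f(y', 1)$ and $V_f(y', 2)$ have first-coordinate sets that avoid the element shared by the slice-$1$ and slice-$2$ blocks at $y$, so no horizontal edge interferes. A routine case analysis on $U_f(y)$ then confirms that every vertex of $V$ has degree at most $1$.
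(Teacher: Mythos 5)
Your construction is the paper's construction up to swapping the two new slices (the paper sends $A \mapsto C$ on slice $1$ and $A \mapsto B$ on slice $2$, you do the reverse), which is an automorphism of $H(n+1,3)$, and your verification via Proposition \ref{ABCXYZcharacterization} is a correct, slightly more detailed version of the paper's ``straightforward to verify'' step. The proposal is correct and essentially identical in approach.
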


\begin{proof}
Define $V_f : \Z_3^{n} \to \{A, B, C, X, Y, Z\}$ as follows. For each $y \in \Z_3^{n-1}$,
\begin{enumerate}
    \item If $U_f(y) = X$ then $V_f(0, y) = X$, $V_f(1, y) = A$, and $V_f(2, y) = A$.
    
    \item If $U_f(y) = Y$ then $V_f(0, y) = Y$, $V_f(1, y) = B$, and $V_f(2, y) = B$.
    
    \item If $U_f(y) = Z$ then $V_f(0, y) = Z$, $V_f(1, y) = C$, and $V_f(2, y) = C$.
    
    \item If $U_f(y) = A$ then $V_f(0, y) = A$, $V_f(1, y) = C$, and $V_f(2, y) = B$.
    
    \item If $U_f(y) = B$ then $V_f(0, y) = B$, $V_f(1, y) = A$, and $V_f(2, y) = C$.
    
    \item If $U_f(y) = C$ then $V_f(0, y) = C$, $V_f(1, y) = B$, and $V_f(2, y) = A$.
\end{enumerate}
It is straightforward to verify that $V_f$ satisfies the constraints in Proposition \ref{constraints} and hence the corresponding subset $V = \cup_{x \in \Z_3^{n-1}}{(V_f(x) \times \{x\})}$ has induced degree 1. Since $|V|-\alpha(H(n+1,3)) = |\{x \in \Z_3^n : V_f(x) \in \{X, Y, Z\}\}|$ and $|\{x \in \Z_3^n : V_f(x) \in \{X, Y, Z\}\}| = |\{x \in \Z_3^{n-1} : U_f(x) \in \{X, Y, Z\}\}|$ by construction, the result follows.
\end{proof}

\begin{Cor}
    For all $n \ge 6$, there exists $U \subseteq \Z_3^n$ such that $U$ has induced degree 1 and $|U| = \alpha(H(n,3))+18$.
\end{Cor}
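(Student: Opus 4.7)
The plan is to prove this by straightforward induction on $n$, using the explicit example of $18$ extra points in $\Z_3^6$ as the base case and iterating Lemma \ref{lift} for the inductive step. Since the exhibited example is presented as a function $U_f : \Z_3^5 \to \{A,B,C,X,Y,Z\}$, the corresponding set $U \subseteq \Z_3^6$ is automatically $1$-saturated (every line in direction $e_1$ meets $U$) and each such line contains at most two points of $U$, which is precisely the hypothesis required to apply Lemma \ref{lift}.

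For the inductive step I would argue as follows. Suppose $n \geq 6$ and that we have $U \subseteq \Z_3^n$ of induced degree $1$ with $|U| = \alpha(H(n,3)) + 18$, given via some $U_f : \Z_3^{n-1} \to \{A,B,C,X,Y,Z\}$. Lemma \ref{lift} then yields $V \subseteq \Z_3^{n+1}$ of induced degree $1$ with
\[
|V| - \alpha(H(n+1,3)) \;=\; |U| - \alpha(H(n,3)) \;=\; 18.
\]
Crucially, the construction of $V_f$ in the proof of Lemma \ref{lift} again takes values in $\{A,B,C,X,Y,Z\}$, so $V$ is $1$-saturated and presented in the same form as $U$. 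Hence the inductive hypothesis is preserved and we can continue to all $n \geq 6$.

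The only substantive step is the base case, which reduces to verifying the local constraints of Proposition \ref{constraints} on the displayed assignment in $\Z_3^5$ defining the $\Z_3^6$ example. This is a finite, purely mechanical check: for every one of the $3^5 = 243$ blocks we confirm that no three aligned neighbors share a label, and that every $X$, $Y$, or $Z$ block is adjacent only to $A$, $B$, or $C$ blocks respectively. I expect no real obstacle here — the difficulty was locating such an example in the first place (handled by the SAT-based search described in Section \ref{section:lower}), whereas certifying it once written down is routine.
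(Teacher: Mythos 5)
Your proposal is correct and matches the paper's intended argument exactly: the paper states this corollary immediately after Lemma \ref{lift} and the explicit $18$-extra-point example in $\Z_3^6$, with the understanding that one iterates the lifting lemma starting from that base case. Your observation that the lifted function $V_f$ again takes values in $\{A,B,C,X,Y,Z\}$ (so the induction can continue) is the only point worth making explicit, and you make it.
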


Note that our example of a subset of $\Z_3^5$ with 6 extra points contains a subset that is isomorphic to the extremal set $X_4$ in $\Z_3^4$, i.e., it is an extension of $X_4$. Similarly, our example of a subset of $\Z_3^6$ with 18 extra points is an extension of the example of a subset of $\Z_3^5$ with 6 extra points. However, the SAT solver determined that there is no \1sat subset of $\Z_3^7$ with induced degree 1 and size greater than $\alpha(H(n,7))+18$ which extends our example of a set in $\Z_3^6$ with 18 extra points.

\section{Size upper bound}
\label{section:upper}
As induced degree is non-increasing under restrictions, the extremal subset of $\Z_3^4$ we presented in the previous section implies an upper bound of $6$ and $18$ extra points for $\Z_3^5$ and $\Z_3^6$. Hence both examples we showed are the largest possible in the corresponding dimensions. Moreover, since the extremal subset of $\Z_3^4$ is unique up to isomorphism and it is $i$-saturated for all $i \in [4]$, we have

\begin{Prop}
    For $n \in [6]$, if $U \subseteq \Z_3^n$ has induced degree 1 and has the maximum size, then $U$ is $i$-saturated for some $i \in [n]$.
\end{Prop}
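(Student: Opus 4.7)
The plan is to induct on $n$, handling the small-dimensional cases $n \in [4]$ by direct inspection of the already-classified extremal subsets and then lifting to $n = 5, 6$ by a slicing argument. For $n = 1, 2$ the unique extremal subset is clearly $1$-saturated. For $n = 3$ the extremal sets of size $\alpha(H(3, 3)) + 1 = 10$ can each be checked to be $i$-saturated for some $i$: one is $D_3$, which is $1$- and $2$-saturated by the Corollary following Proposition \ref{prop:numberoflines}, and the other can be verified directly from an explicit description. For $n = 4$, the set $X_4$ displayed in Section \ref{section:lower} is $i$-saturated for every $i \in [4]$ by inspection, and Appendix \ref{unique4} shows that it is the unique extremal set up to isomorphism; hence every maximum-size induced-degree-$1$ subset of $\Z_3^4$ is $i$-saturated in every direction.

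For $n = 5$, let $U \subseteq \Z_3^5$ be of maximum size. Because induced degree is non-increasing under restriction, for each coordinate $j \in [5]$ and each $c \in \Z_3$ the slice $U \cap \{x_j = c\}$ is an induced-degree-$1$ subset of a copy of $\Z_3^4$ and so has size at most $29$; summing over $c$ gives $|U| \le 87$, a bound matched by the construction with six extra points in Section \ref{section:lower}. Equality therefore forces each of the three slices to have size exactly $29$. By the $n = 4$ uniqueness each slice is isomorphic to $X_4$, and hence is saturated in every direction of $[5] \setminus \{j\}$. Since every line of $\Z_3^5$ in direction $e_i$ with $i \neq j$ lies entirely in a single slice $\{x_j = c\}$, $U$ is $i$-saturated for every $i \in [5] \setminus \{j\}$; letting $j$ vary over $[5]$ yields $i$-saturation of $U$ in every direction.

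For $n = 6$ the same slicing argument iterates. Slicing along any coordinate $j \in [6]$ gives three pieces each of size at most $87$, so $|U| \le 261$, which is matched by the construction with eighteen extra points in Section \ref{section:lower}; hence each slice is an extremal induced-degree-$1$ subset of $\Z_3^5$. By the $n = 5$ case just proved, each slice is $i$-saturated in every direction of $[6] \setminus \{j\}$, and the same line-containment observation lifts this to show $U$ is $i$-saturated for every such $i$. The only genuine content in the argument lies in the base cases $n \le 4$, in particular the full $i$-saturation of $X_4$ together with its uniqueness; once those are verified, equality $|U| = 3 \cdot (\text{max size at } n - 1)$ forces every slice to be extremal at the lower level and the saturation property propagates upward automatically.
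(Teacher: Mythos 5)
Your proposal is correct and follows essentially the same route as the paper: the upper bounds for $n=5,6$ come from slicing and the extremal bound for $\Z_3^4$, equality forces every slice to be extremal, and the uniqueness plus full $i$-saturation of $X_4$ (Appendix~\ref{unique4}) then propagates saturation upward. The paper states this argument only in compressed form at the start of Section~\ref{section:upper}, and your write-up just makes the same slicing step explicit.
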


In this section we prove Theorem \ref{SAT_upper_bound} which we restate as follows.

\begin{Thm}
\label{constant_extra}
    Let $U \subseteq \Z_3^n$. If $U$ has induced degree 1 and $U$ is $1$-saturated, then $|U| \le 3^{n-1} + 81$.
\end{Thm}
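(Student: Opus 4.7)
The plan is to bound the set of ``extra coordinates'' $E := \{x \in \Z_3^{n-1} : U_f(x) \in \{X, Y, Z\}\}$, since $|U| = 3^{n-1} + |E|$ and the goal becomes $|E| \le 3^6 = 729$. The base case $n \le 7$ is immediate because $|E| \le |\Z_3^{n-1}| = 3^{n-1} \le 3^6$. For $n \ge 8$, I would aim to prove the following structural claim: for each extra point $y \in E$ there exists an affine subspace $H^y_{red} \subseteq \Z_3^{n-1}$ containing $y$, of dimension at least $n-7$, such that $U \cap (\Z_3 \times H^y_{red})$ is a canonical set for $\Z_3 \times H^y_{red}$ with extra point $y$.

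Given this structural claim, the theorem would follow essentially for free from \lemmaref{canonical_path}. Indeed, if two distinct extra points $y \ne y' \in E$ had $H^y_{red} \cap H^{y'}_{red} \ne \phi$, then by part 2 of \lemmaref{canonical_path} the two canonical restrictions would share the same extra point, contradicting $y \ne y'$. Hence the family $\{H^y_{red}\}_{y \in E}$ consists of pairwise disjoint affine subspaces of $\Z_3^{n-1}$, each of size at least $3^{n-7}$, giving the desired inequality $|E| \le 3^{n-1}/3^{n-7} = 3^6 = 729$.

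To prove the structural claim, I would construct $H^y_{red}$ greedily. The 0-dimensional subspace $\{y\}$ always works: since $U_f(y) \in \{X, Y, Z\}$, the restriction $U \cap (\Z_3 \times \{y\})$ contains two of the three points in the column, which is canonical in $\Z_3$ with extra point $y$. One then extends direction by direction. By \propref{ABCXYZcharacterization}, for any $i \in [n-1]$ the two neighbors $y \pm e_i$ in $\Z_3^{n-1}$ are forced into a specific independent-block pattern (e.g.\ all $A$ if $U_f(y) = X$), which is precisely the first step of the recursive construction of $D_k$ in \defref{D_n}; each subsequent extension must be certified against the characterization in \thmref{uniqueDn}, using the propagation statement in \lemmaref{canonical_path} to ensure that the extra point stays at $y$ as the dimension grows.

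The main obstacle is establishing that the greedy extension really reaches codimension at most $6$. The first couple of extension steps are nearly automatic from the local constraints, but at each new coordinate direction one must verify that the $U_f$-values on the enlarged subspace match the recursive $D_k$ pattern globally, not just on neighbors of $y$. I expect this to require an exhaustive low-dimensional case analysis of how ``defects'' in the canonical structure can propagate, and I suspect that the appearance of the constant $6$ is explained precisely by the low-dimensional extremal configurations classified in \secref{section:lower} (where non-canonical induced-degree-$1$ subsets with multiple extra points first appear in dimensions $4, 5, 6$); the SAT-improved bound $3^{n-1}+81$ then corresponds to pushing this case analysis further, down to dimension $4$.
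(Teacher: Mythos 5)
Your overall architecture matches the paper's: reduce to bounding the number of extra points, attach to each extra point a large canonical affine subset, and use part 2 of \lemmaref{canonical_path} (as the paper does in Lemma \ref{disjoint}) to conclude these subsets are pairwise disjoint, giving at most $3^{n-1}/3^{n-7} = 729$ of them. That framing and the disjointness step are correct.

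However, there is a genuine gap at exactly the point you flag as ``the main obstacle'': you do not prove the structural claim that every extra point lies in a canonical affine subset of codimension at most $6$, and this claim is the entire technical content of the theorem (it is Theorem \ref{large_canonical} in the paper). Your greedy direction-by-direction extension from $\{y\}$ does not explain why the extension cannot get stuck after only a few steps; the local constraints from \propref{ABCXYZcharacterization} only force the immediate neighbors of $y$, and a priori a $2$-dimensional restriction through $y$ in directions $i,j$ need not be canonical for any choice of $j$. The paper's resolution is the Line Extension Lemma (Lemma \ref{line_extend}): for $n \ge 8$, for \emph{every} direction $i$ there exists $j$ such that the restriction to $\span(e_i,e_j)$ is canonical. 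Proving this requires a complete classification of the obstructing ``$1$-skew'' configurations in dimensions $2$ and $3$ (Theorem \ref{bad3}) and an incompatibility lemma in dimension $4$ (Lemma \ref{forbidden}), followed by a pigeonhole argument in dimension $8$. One then still needs Lemma \ref{ind_extend} to propagate the canonical structure when adding a new direction (chosen via the popular direction of the current canonical set, not arbitrarily), which is where the induction in Theorem \ref{large_canonical} actually lives. The constant $6$ arises because the line extension lemma needs $7$ free coordinates to apply at each step (Corollary \ref{line_extend_general}), not directly from the extremal examples of Section \ref{section:lower}; your guess about the SAT-improved bound $81$ is correct in spirit (the lemma can be pushed down to $n \ge 6$), but none of this is established by the proposal as written.
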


Recall that \1sat subsets with induced degree 1 can be represented by functions $U_f : \Z_3^{n-1} \to \{A, B, C, X, Y, Z\}$ which satisfy the conditions in Proposition \ref{ABCXYZcharacterization}. We will use the function representation extensively in this section. In order to simplify the notation, we will consider functions with domain $\Z_3^{n}$ instead of $\Z_3^{n-1}$ and note that these functions correspond to subsets of $\Z_3^{n+1}$.

All subsets of $\Z_3^{n+1}$ we consider in this section are \1sat and it will be convenient to extend our definitions for subsets to their corresponding functions.

\begin{Def}[Canonical functions]
We say that a function $U_f : \Z_3^n \to \{A, B, C, X, Y, Z\}$ is \emph{canonical} if the corresponding subset is canonical.
\end{Def}
\begin{Def}[Induced degree]
    The \emph{induced degree} of a function $U_f : \Z_3^n \to \{A, B, C, X, Y, Z\}$ is defined as the maximum degree of the subgraph induced by the subset $U = \cup_{x \in \Z_3^{n}}{(U_f(x) \times \{x\})}$ corresponding to $U_f$.
\end{Def}

\begin{Def}
    We say that that two functions $U_f, V_f : \Z_3^n \to \{A, B, C, X, Y, Z\}$ are \emph{isomorphic} if their corresponding subsets are isomorphic, i.e., there exists $\sigma \in \Aut(H(n+1, 3))$ such that $\sigma(U) = V$ where $U, V$ are the corresponding subsets of $U_f$ and $V_f$.
\end{Def}

\subsection{Proof strategy for our size upper bound}
The main observation is that if $U_f$ has induced degree 1 and $U_f(x) \in \{X, Y, Z\}$, then not only does it determine the values of $U_f(y)$ for all $y \in N(x)$, it also imposes a strong restriction on what the values can be on $U_f(z)$ for $z$ where $d_H(x, z) = 2$. Specifically, we will show that for $n \ge 6$, if $U_f$ has induced degree 1 and $U_f(x) \in \{X, Y, Z\}$, then for all $i \in [n]$ there exists $j \in [n]\setminus \{i\}$ such that either
\begin{eqnarray*}
U_f(x+e_j) &=& U_f(x+2e_j),\\
U_f(x+e_i+e_j) &=& U_f(x+e_i+2e_j),\\
U_f(x+2e_i+e_j) &=& U_f(x+2e_i+2e_j).
\end{eqnarray*}
or
\begin{eqnarray*}
U_f(x+e_i) &=& U_f(x+2e_i),\\
U_f(x+e_j+e_i) &=& U_f(x+e_j+2e_i),\\
U_f(x+2e_j+e_i) &=& U_f(x+2e_j+2e_i).
\end{eqnarray*}

For illustration, suppose $U_f(x) = X$ and let $i \in [n]$, then there must be a different direction $j \in [n] \setminus \{i\}$ such that the function looks like one of the following:
\[
\begin{tabular}{c}
    \begin{tabular}{c c c}
         & $\overset{i}{\rightarrow}$ &
    \end{tabular}\\
    \begin{tabular}{c c c}
        $j \downarrow$ & \begin{tabular}{|c|c|c|} \hline $X$ & $A$ & $A$\\ \hline $A$ & $E$ & $F$\\ \hline $A$ & $E$ & $F$\\ \hline \end{tabular}  & $\cdots$
    \end{tabular}\\
    \begin{tabular}{c c c}
         & $\vdots$ &
    \end{tabular}
\end{tabular} \text{ or }
\begin{tabular}{c}
    \begin{tabular}{c c c}
         & $\overset{i}{\rightarrow}$ &
    \end{tabular}\\
    \begin{tabular}{c c c}
        $j \downarrow$ & \begin{tabular}{|c|c|c|} \hline $X$ & $A$ & $A$\\ \hline $A$ & $E$ & $E$\\ \hline $A$ & $F$ & $F$\\ \hline \end{tabular}  & $\cdots$
    \end{tabular}\\
    \begin{tabular}{c c c}
         & $\vdots$ &
    \end{tabular}
\end{tabular}
\]
where $E, F \in \{B, C\}$ and $E \ne F$. In either case, we find a canonical set containing $x$. We then show that it is possible to find a much larger canonical set by choosing different directions $i$. Specifically, we will show that there exists a $d$-dimensional canonical set containing $x$ where $d \ge n-4$. By constructing a large canonical set containing each $x$ for which $U_f(x) \in \{X, Y, Z\}$ and showing that canonical sets containing different $x$ are disjoint, we conclude that $|\{x \in \Z_3^n : U_f(x) \in \{X, Y, Z\}\}| \le 3^4 = 81$. Since the size of $U$ is precisely $3^{n} + |\{x \in \Z_3^n : U_f(x) \in \{X, Y, Z\}\}|$, the theorem follows.

\subsection{The line extension lemma}\label{sec:lineextensionlemma}

The first step of our proof is the following lemma.

\begin{Lemma}[Line extension lemma]
\label{SAT_line_extend}
Let $n \ge 6$ and $U_f : \Z_3^n \to \{A, B, C, X, Y, Z\}$. If $U_f$ has induced degree 1 and $U_f(0^n) = X$, then for all $i \in [n]$ there exists $j \in [n] \setminus \{i\}$ such that the restriction of $U_f$ to $\span(e_i, e_j)$ is canonical.
\end{Lemma}

The following definition will be useful.

\begin{Def}[$i$-skew functions]
We say that a function $U_f : \Z_3^n \to \{A, B, C, X, Y, Z\}$ is \emph{$i$-skew} if $U_f$ satisfies the following:
    \begin{enumerate}
        \item $U_f$ has induced degree 1. 
    
        \item $U_f(0^n) = X$.

        \item For all $j \in [n]\setminus\{i\}$, the restriction of $U_f$ to $\span(e_i, e_j)$ is not canonical.
    \end{enumerate}
\end{Def}

In other words, Lemma \ref{SAT_line_extend} asserts that if a function is $i$-skew for some $i \in [n]$, then $n \le 5$. Using ideas which are similar to the ideas discussed in Section \ref{section:lower}, deciding if an $i$-skew function exists in dimension $n$ can be formulated as a SAT formula. Using a SAT solver, we verified that there are no $i$-skew functions $U_f : \Z_3^6 \to \{A, B, C, X, Y, Z\}$. 

Note that this is the only place where we use a computer-assisted argument. In the rest of this subsection, we present a proof of Lemma \ref{SAT_line_extend} with $n \ge 8$ instead of $n \ge 6$ that does not require a SAT solver. More precisely, we prove the following:

\begin{Lemma}[Weak line extension lemma]
\label{line_extend}
Let $n \ge 8$ and $U_f : \Z_3^n \to \{A, B, C, X, Y, Z\}$. If $U_f$ has induced degree 1 and $U_f(0) = X$, then for all $i \in [n]$ there exists $j \in [n] \setminus \{i\}$ such that the restriction of $U_f$ to $\span(e_i, e_j)$ is canonical.
\end{Lemma}

To prove Lemma \ref{line_extend}, we give a complete characterization of $1$-skew functions for $n = 2, 3$ and use this characterization to prove a property of $1$-skew functions for $n = 4$ which is sufficient for proving Lemma \ref{line_extend}.

\subsubsection{$n = 2$}
Let $U_f : \Z_3^2 \to \{A, B, C, X, Y, Z\}$ and $R = \{(1, 1), (1, 2), (2, 1), (2, 2)\}$. If $U_f$ has induced degree 1 and $U_f(0, 0) = X$, then it is clear that $A \notin U_f(R)$ since otherwise $U_f$ will have induced degree at least 2, Similarly, $\{X, Y, Z\} \cap U_f(R) = \phi$ since $X, Y$ and $Z$ intersect with each other and each of them will force the neighbors to be their complements. Since any three elements in $R$ form a path of length 2, we cannot have three $B$ or three $C$ on $R$. It follows there are exactly two $B$ and exactly two $C$ on $R$. If the $B$ are adjacent then it is disjoint from a maximum size independent set and hence is canonical. Thus, the only 1-skew function is isomorphic to the following:

\begin{equation}
\label{D3}
\begin{tabular}{|c|c|c|}
    \hline
    $X$ & $A$ & $A$\\
    \hline
    $A$ & $B$ & $C$\\
    \hline
    $A$ & $C$ & $B$\\
    \hline
\end{tabular}
\end{equation}

The stabilizing actions for this function are generated by (i) swapping the 2nd and 3rd rows and then swapping $B$ and $C$, and (ii) swapping the row and column coordinates.

\subsubsection{$n = 3$}
\begin{Def}
Let $n > m$. We say that a function $U_f : \Z_3^n \to \{A, B, C, X, Y, Z\}$ \emph{extends} a function $V_f : \Z_3^m \to \{A, B, C, X, Y, Z\}$ if there exists a function $W_f$ that is isomorphic to $U_f$ and $W_f(x \times 0^{m-n}) = V_f(x)$ for all $x \in \Z_3^m$.
\end{Def}
We now show that up to isomorphism, there are precisely three 1-skew functions for $n = 3$. We list the $14$ distinct 1-skew functions for $n = 3$ which extend \eqref{D3} and have the first row of the second block equal to $(A, B, C)$ in Appendix \ref{sec:nequals3badfunctions}.
\begin{Thm}
    \label{bad3}
    For $n = 3$, there are 28 1-skew functions extending \eqref{D3}. Each of them is isomorphic to one of the following:
    \begin{enumerate}[(a)]
    \item 
    \begin{equation}
    \label{inde}
        \begin{tabular}{|c|c|c|}
            \hline
            $X$ & $A$ & $A$\\
            \hline
            $A$ & $B$ & $C$\\
            \hline
            $A$ & $C$ & $B$\\
            \hline
        \end{tabular} \ 
        \begin{tabular}{|c|c|c|}
            \hline
            $A$ & $B$ & $C$\\
            \hline
            $B$ & $C$ & $A$\\
            \hline
            $C$ & $A$ & $B$\\
            \hline
        \end{tabular} \ 
        \begin{tabular}{|c|c|c|}
            \hline
            $A$ & $C$ & $B$\\
            \hline
            $B$ & $A$ & $C$\\
            \hline
            $C$ & $B$ & $A$\\
            \hline
        \end{tabular}
    \end{equation}
    with stabilizing actions generated by swapping the 2nd and 3rd rows, swapping the 2nd and 3rd blocks, and then swapping $B$ and $C$.
    
    \item 
    \begin{equation}
    \label{As}
        \begin{tabular}{|c|c|c|}
            \hline
            $X$ & $A$ & $A$\\
            \hline
            $A$ & $B$ & $C$\\
            \hline
            $A$ & $C$ & $B$\\
            \hline
        \end{tabular} \ 
        \begin{tabular}{|c|c|c|}
            \hline
            $A$ & $B$ & $C$\\
            \hline
            $B$ & $C$ & $A$\\
            \hline
            $C$ & $A$ & $B$\\
            \hline
        \end{tabular} \ 
        \begin{tabular}{|c|c|c|}
            \hline
            $A$ & $C$ & $B$\\
            \hline
            $C$ & $B$ & $A$\\
            \hline
            $B$ & $A$ & $C$\\
            \hline
        \end{tabular}
    \end{equation}
    with stabilizing actions generated by (i) swapping the row and column coordinates; and (ii) swapping the 2nd and 3rd rows, swapping the 2nd and 3rd columns, and then swapping the 2nd and 3rd blocks.
    
    \item
    \begin{equation}
    \label{extra_point}
        \begin{tabular}{|c|c|c|}
            \hline
            $X$ & $A$ & $A$\\
            \hline
            $A$ & $B$ & $C$\\
            \hline
            $A$ & $C$ & $B$\\
            \hline
        \end{tabular} \ 
        \begin{tabular}{|c|c|c|}
            \hline
            $A$ & $B$ & $C$\\
            \hline
            $B$ & $C$ & $A$\\
            \hline
            $C$ & $A$ & $B$\\
            \hline
        \end{tabular} \ 
        \begin{tabular}{|c|c|c|}
            \hline
            $A$ & $C$ & $B$\\
            \hline
            $C$ & $A$ & $B$\\
            \hline
            $B$ & $B$ & $E$\\
            \hline
        \end{tabular}
    \end{equation}
    where $E \in \{A, C, Y\}$. The stabilizing actions are generated by (i) swapping the row and column coordinates; and (ii) swapping the row and block coordinates.
    \end{enumerate}
\end{Thm}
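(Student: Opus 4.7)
Proof proposal: The proof proceeds by a structured enumeration of extensions of \eqref{D3} to $\Z_3^3$, exploiting the fact that a 1-skew 2D function with $X$ at the origin is uniquely \eqref{D3} up to isomorphism. Consequently, the restriction of any 1-skew function on $\Z_3^3$ to $\span(e_1, e_3)$ (the top row across the three blocks) must also be isomorphic to \eqref{D3}, which drastically narrows the search space before any detailed case analysis begins.

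I would first use $U_f(0, 0, 0) = X$ together with constraint 2 of \propref{constraints} to force $U_f(0, 0, 1) = U_f(0, 0, 2) = A$, pinning down the top-left cells of blocks 2 and 3. Since the restriction of $U_f$ to $\span(e_1, e_3)$ is a 2D 1-skew function with $X$ at the origin, it is isomorphic to \eqref{D3}, which determines the entire top row of blocks 2 and 3 up to a swap of the two blocks. Normalizing by an automorphism in the stabilizer of \eqref{D3}, I may assume the first row of block 2 is $(A, B, C)$ and the first row of block 3 is $(A, C, B)$. Next I would enumerate all admissible values of the six remaining cells of block 2, subject to the adjacency constraints of \propref{constraints} with block 1 and within block 2. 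A careful case-by-case analysis yields exactly 14 admissible block-2 configurations (the list deferred to the appendix). For each such block 2, I would then enumerate the admissible block-3 configurations using the adjacency constraints between block 1, block 2, and within block 3, finding precisely two choices per block-2 configuration and hence 28 functions in total.

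Finally, I would classify these 28 functions into isomorphism classes under $\Aut(H(4, 3))$. The stabilizer of \eqref{D3} within $\Aut(H(4, 3))$ is the dihedral group of order $8$ generated by swapping rows 2 and 3 together with swapping $B \leftrightarrow C$, and by swapping the row and column coordinates; grouping the 28 extensions under this action (and accounting for additional automorphisms that permute the three function-coordinate directions) yields the three families (a), (b), (c), with (c) further parameterized by $E \in \{A, C, Y\}$. The stabilizer of each representative is then read off directly by checking which generators fix the chosen extension.

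The main obstacle is the combinatorial enumeration of the 14 block-2 configurations and the two block-3 extensions of each: this is mechanical but tedious, and there appears to be no structural shortcut that avoids some form of exhaustive check, which is why the paper defers the explicit listing to the appendix. The subsequent isomorphism classification, while conceptually straightforward, still demands careful bookkeeping since the non-trivial automorphisms simultaneously permute coordinates (rows, columns, blocks) and relabel the symbols $A, B, C, X, Y, Z$, and one must verify that the claimed stabilizers are indeed complete.
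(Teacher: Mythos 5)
Your overall strategy (normalize block 1 to \eqref{D3} using the $n=2$ uniqueness, use the non-canonicity of the restriction to $\span(e_1,e_3)$ to pin down the top rows of blocks 2 and 3 up to a block swap, enumerate the remaining cells, then classify under the stabilizer) is the same as the paper's. However, your load-bearing counting claims are wrong, and the factorization $28 = 14 \times 2$ you propose does not reflect the actual structure. There are not ``14 admissible block-2 configurations, each with precisely two block-3 completions.'' In the paper's enumeration (Appendix C) there are 14 complete functions whose block-2 top row is $(A,B,C)$, and among these the same block 2 recurs with different block-3 completions: for instance the Latin-square block 2 with rows $(A,B,C),(B,C,A),(C,A,B)$ admits at least three completions (items (i), (v), (xi)), while several other block-2 configurations admit exactly one. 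The correct source of the factor of 2 is the swap of blocks 2 and 3 (i.e., top row $(A,B,C)$ versus $(A,C,B)$), giving $14+14=28$, not a two-fold choice of block 3. As written, your argument would not produce the stated count even if executed faithfully.

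A second, related gap is that you give no organizing principle for the enumeration beyond ``careful case-by-case analysis.'' The actual content of the paper's proof is such a principle: the four cells $U_f(0,x_2,x_3)$ with $x_2,x_3\in\{1,2\}$ must contain exactly two $B$'s and two $C$'s, and the dichotomy adjacent-$B$'s versus diagonal-$B$'s (followed, in the diagonal case, by an analysis of a distinguished $6$-cycle that must either contain adjacent $A$'s or alternate $A$ and $B$) is what makes the enumeration both tractable and checkable, and is also what cleanly separates the three isomorphism types (a), (b), (c). Finally, note that the paper's total of 28 treats each $E$-parameterized family of type \eqref{extra_point} as a single item; any literal count of functions must reconcile with this convention, which your sketch does not address.
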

\begin{proof}
    Since the restriction of $U_f$ to $\span(e_1, e_2)$ is not canonical, by swapping $B$ and $C$ if necessary we can assume the restriction of $U_f$ to $\span(e_1, e_2)$ is identical to \eqref{D3}. Furthermore, by swapping the second and third block, we can assume the function is of the following form. Note that this also reduces the number of extensions by half. We consider the following partial function for which the values of the empty entries will be determined later.
     
    \[
    \begin{tabular}{|c|c|c|}
        \hline
        $X$ & $A$ & $A$\\
        \hline
        $A$ & $B$ & $C$\\
        \hline
        $A$ & $C$ & $B$\\
    \hline
    \end{tabular} \
    \begin{tabular}{|c|c|c|}
        \hline
        $A$ & $B$ & $C$\\
        \hline
        \ \ \ & \ \ \ & \ \ \ \\
        \hline
        \ \ \ & \ \ \ & \ \ \ \\
    \hline
    \end{tabular} \
    \begin{tabular}{|c|c|c|}
        \hline
        $A$ & $C$ & $B$\\
        \hline
        \ \ \ & \ \ \ & \ \ \ \\
        \hline
        \ \ \ & \ \ \ & \ \ \ \\
    \hline
    \end{tabular}
    \]

    We now consider all the possible values for the entries in the first column of the middle and last blocks which do not immediately create a vertex of degree at least $2$. Let $S = \{(0, 1, 1), (0, 1, 2), (0, 2, 1), (0, 2, 2)\}$ be the set of coordinates of these entries. We must have that $U_f(x) \in \{B, C\}$ for all $x \in S$ and exactly two of them are equal to $B$ as otherwise $U_f$ would have induced degree at least $2$.\\

    \begin{itemize}
    \item \textbf{Case 1: adjacent $B$ in $S$.}
    If $U_f(0, 1, 1) = U_f(0, 1, 2) = B$ (the $(0, 1)$ entries of the middle and third blocks), then the entries below them must be equal to $C$. This allows us to fill the remaining entries as follows. The subscripts in the table below indicate the order in which they are deduced.
    \[
    \begin{tabular}{|c|c|c|}
        \hline
        $X$ & $A$ & $A$\\
        \hline
        $A$ & $B$ & $C$\\
        \hline
        $A$ & $C$ & $B$\\
        \hline
    \end{tabular} \ 
    \begin{tabular}{|c|c|c|}
        \hline
        $A$ & $B$ & $C$\\
        \hline
        $B$ & $C_7$ & $A_1$\\
        \hline
        $C$ & $A_5$ & $B_3$\\
        \hline
    \end{tabular} \ 
    \begin{tabular}{|c|c|c|}
        \hline
        $A$ & $C$ & $B$\\
        \hline
        $B$ & $A_6$ & $C_4$\\
        \hline
        $C$ & $B_8$ & $A_2$\\
        \hline
    \end{tabular}.
    \]
    
    If the adjacent $B$ are in the first column of the middle block, i.e., $U_f(0, 1, 1) = U_f(0, 2, 1) = B$, then the two entries in the first column of the last block must be equal to $C$. By swapping the row coordinate with the block coordinate, it can be reduced to the case above and hence the remaining entries will be uniquely identified. The result is as follows.

    \[
    \begin{tabular}{|c|c|c|}
        \hline
        $X$ & $A$ & $A$\\
        \hline
        $A$ & $B$ & $C$\\
        \hline
        $A$ & $C$ & $B$\\
        \hline
    \end{tabular} \ 
    \begin{tabular}{|c|c|c|}
        \hline
        $A$ & $B$ & $C$\\
        \hline
        $B$ & $C$ & $A$\\
        \hline
        $B$ & $A$ & $C$\\
        \hline
    \end{tabular} \ 
    \begin{tabular}{|c|c|c|}
        \hline
        $A$ & $C$ & $B$\\
        \hline
        $C$ & $A$ & $B$\\
        \hline
        $C$ & $B$ & $A$\\
        \hline
    \end{tabular}.
    \]

    Similarly, we can uniquely deduce the functions for the remaining two cases and they are both isomorphic to one of the functions above.
    \[
    \begin{tabular}{|c|c|c|}
        \hline
        $X$ & $A$ & $A$\\
        \hline
        $A$ & $B$ & $C$\\
        \hline
        $A$ & $C$ & $B$\\
        \hline
    \end{tabular} \ 
    \begin{tabular}{|c|c|c|}
        \hline
        $A$ & $B$ & $C$\\
        \hline
        $C$ & $A$ & $B$\\
        \hline
        $B$ & $C$ & $A$\\
        \hline
    \end{tabular} \ 
    \begin{tabular}{|c|c|c|}
        \hline
        $A$ & $C$ & $B$\\
        \hline
        $C$ & $B$ & $A$\\
        \hline
        $B$ & $A$ & $C$\\
        \hline
    \end{tabular}.
    \]
    and
    \[
    \begin{tabular}{|c|c|c|}
        \hline
        $X$ & $A$ & $A$\\
        \hline
        $A$ & $B$ & $C$\\
        \hline
        $A$ & $C$ & $B$\\
        \hline
    \end{tabular} \ 
    \begin{tabular}{|c|c|c|}
        \hline
        $A$ & $B$ & $C$\\
        \hline
        $C$ & $A$ & $B$\\
        \hline
        $C$ & $B$ & $A$\\
        \hline
    \end{tabular} \ 
    \begin{tabular}{|c|c|c|}
        \hline
        $A$ & $C$ & $B$\\
        \hline
        $B$ & $C$ & $A$\\
        \hline
        $B$ & $A$ & $C$\\
        \hline
    \end{tabular}.
    \]
    The rese of the functions which are isomorphic to these cases can be generated by swapping the second and third block so there are a total of $8$ $1$-skew functions which are isomorphic to \eqref{inde}.

    \item \textbf{Case 2: no adjacent $B$ in $S$.}
    
    There are two cases for which there are no adjacent $B$ in the first column of the middle and last block. These two cases are isomorphic by swapping the 2nd and 3rd columns and then swapping $B$ and $C$. Thus, it suffices to consider the case that the $(0, 1)$ entry in the middle block is $B$.\\

    Consider the 6-cycle consisting of the cells in gray. 
    \[
    \begin{tabular}{|c|c|c|}
        \hline
        $X$ & $A$ & $A$\\
        \hline
        $A$ & $B$ & $C$\\
        \hline
        $A$ & $C$ & $B$\\
        \hline
    \end{tabular} \ 
    \begin{tabular}{|c|c|c|}
        \hline
        $A$ & $B$ & $C$\\
        \hline
        $B$ &  & \cellcolor{lightgray}\\
        \hline
        $C$ & \cellcolor{lightgray} & \cellcolor{lightgray} \\
        \hline
    \end{tabular} \ 
    \begin{tabular}{|c|c|c|}
        \hline
        $A$ & $C$ & $B$\\
        \hline
        $C$ & \cellcolor{lightgray}& \cellcolor{lightgray} \\
        \hline
        $B$ & \cellcolor{lightgray} & \\
        \hline
    \end{tabular}.
    \]

    None of these entries can be $C$. They also cannot be in $\{X, Y, Z\}$ and hence they are either $A$ or $B$. However, there cannot be a pair of adjacent $B$ in this cycle or otherwise the induced degree will be at least 2. It follows that this 6-cycle must either contain a pair of adjacent $A$s, or it consists of alternating $A$ and $B$.\\

    \begin{itemize}
    \item \textbf{Adjacent $A$ in the 6-cycle.}
    
    The values of all entries will be determined by where we place the adjacent $A$ and there are three possibilities. Suppose the adjacent $A$ are at the $(1, 2)$ entry of the middle and last block. Then the function will be as follows:

    \[
    \begin{tabular}{|c|c|c|}
        \hline
        $X$ & $A$ & $A$\\
        \hline
        $A$ & $B$ & $C$\\
        \hline
        $A$ & $C$ & $B$\\
        \hline
    \end{tabular} \ 
    \begin{tabular}{|c|c|c|}
        \hline
        $A$ & $B$ & $C$\\
        \hline
        $B$ &  & \cellcolor{lightgray}$A$\\
        \hline
        $C$ & \cellcolor{lightgray}$A$ & \cellcolor{lightgray}$B$\\
        \hline
    \end{tabular} \ 
    \begin{tabular}{|c|c|c|}
        \hline
        $A$ & $C$ & $B$\\
        \hline
        $C$ & \cellcolor{lightgray}$B$ & \cellcolor{lightgray}$A$\\
        \hline
        $B$ & \cellcolor{lightgray}$A$ & \\
        \hline
    \end{tabular},
    \]
    
    which further implies the remaining entries must be $C$:

    \[
    \begin{tabular}{|c|c|c|}
        \hline
        $X$ & $A$ & $A$\\
        \hline
        $A$ & $B$ & $C$\\
        \hline
        $A$ & $C$ & $B$\\
        \hline
    \end{tabular} \ 
    \begin{tabular}{|c|c|c|}
        \hline
        $A$ & $B$ & $C$\\
        \hline
        $B$ & $C$ & $A$\\
        \hline
        $C$ & $A$ & $B$\\
        \hline
    \end{tabular} \ 
    \begin{tabular}{|c|c|c|}
        \hline
        $A$ & $C$ & $B$\\
        \hline
        $C$ & $B$ & $A$\\
        \hline
        $B$ & $A$ & $C$\\
        \hline
    \end{tabular}.
    \]

    If the adjacent $A$ are in the last column of the middle block, then it is isomorphic to the case above by swapping the row and block coordinates. Moreover, it is also isomorphic to the case that the adjacent $A$ are in the last row of the middle block by swapping the row and column coordinates. Thus, all three cases are isomorphic to each other.

    The rest of the functions which are isomorphic to these cases can be generated by (i) swapping the 2nd and 3rd columns and then swapping $B$ and $C$ and (ii) swapping the 2nd and 3rd blocks. In total, there are $12$ 1-skew functions with which have adjacent $A$ in the corresponding 6-cycle which extend \eqref{D3} and all of them are isomorphic to \eqref{As}.\\
    
    \item \textbf{No adjacent $A$ in the 6-cycle.}
    It remains to consider the case where the $A$ and $B$ in the 6-cycle are alternating. Suppose the $(2,1)$ entry in the middle block is $A$, then we have the following partial function:

    \[
    \begin{tabular}{|c|c|c|}
        \hline
        $X$ & $A$ & $A$\\
        \hline
        $A$ & $B$ & $C$\\
        \hline
        $A$ & $C$ & $B$\\
        \hline
    \end{tabular} \ 
    \begin{tabular}{|c|c|c|}
        \hline
        $A$ & $B$ & $C$\\
        \hline
        $B$ &  & \cellcolor{lightgray}$A$\\
        \hline
        $C$ & \cellcolor{lightgray}$A$ & \cellcolor{lightgray}$B$\\
        \hline
    \end{tabular} \ 
    \begin{tabular}{|c|c|c|}
        \hline
        $A$ & $C$ & $B$\\
        \hline
        $C$ & \cellcolor{lightgray}$A$ & \cellcolor{lightgray}$B$\\
        \hline
        $B$ & \cellcolor{lightgray}$B$ & \\
        \hline
    \end{tabular}.
    \]
    
    The empty entry of the middle block must be $C$ but the empty entry of the last block can be $A, C$ or $Y$. So we must have
    \[
    \begin{tabular}{|c|c|c|}
        \hline
        $X$ & $A$ & $A$\\
        \hline
        $A$ & $B$ & $C$\\
        \hline
        $A$ & $C$ & $B$\\
        \hline
    \end{tabular} \ 
    \begin{tabular}{|c|c|c|}
        \hline
        $A$ & $B$ & $C$\\
        \hline
        $B$ & $C$ & $A$\\
        \hline
        $C$ & $A$ & $B$\\
        \hline
    \end{tabular} \ 
    \begin{tabular}{|c|c|c|}
        \hline
        $A$ & $C$ & $B$\\
        \hline
        $C$ & $A$ & $B$\\
        \hline
        $B$ & $B$ & $E$\\
        \hline
    \end{tabular}
    \]
    where $E \in \{A, C, Y\}$, which is precisely \eqref{extra_point}.

    If the $(2,1)$ entry in the middle block is $B$ then we obtain the following function which is isomorphic to the function above by swapping both the 2nd and 3rd rows, columns and then blocks.
    \[
    \begin{tabular}{|c|c|c|}
        \hline
        $X$ & $A$ & $A$\\
        \hline
        $A$ & $B$ & $C$\\
        \hline
        $A$ & $C$ & $B$\\
        \hline
    \end{tabular} \ 
    \begin{tabular}{|c|c|c|}
        \hline
        $A$ & $B$ & $C$\\
        \hline
        $B$ & $E$ & $B$\\
        \hline
        $C$ & $B$ & $A$\\
        \hline
    \end{tabular} \ 
    \begin{tabular}{|c|c|c|}
        \hline
        $A$ & $C$ & $B$\\
        \hline
        $C$ & $B$ & $A$\\
        \hline
        $B$ & $A$ & $C$\\
        \hline
    \end{tabular}.
    \]
     Similar to the case when there are adjacent $A$ in the $6$-cycle, the rest of the functions can be generated by (i) swapping the 2nd and 3rd columns and then swapping $B$ and $C$ (note that swapping $B$ and $C$ also swaps $Y$ and $Z$) and (ii) swapping the 2nd and 3rd blocks. In total, there are $8$ 1-skew functions with alternating $A$ in the corresponding 6-cycle which extend \eqref{D3} and all of them are isomorphic to \eqref{extra_point}.
    \end{itemize}
    \end{itemize}
    This covers all of the cases so the proof is complete.
\end{proof}

\subsubsection{$n = 4$}
In this case, there are a lot more 1-skew functions and we will not give a complete list of them. Instead, we prove the following lemma, which is sufficient for proving Lemma \ref{line_extend}.

\begin{Lemma}
\label{forbidden}
    Let $U_f : \Z_3^4 \to \{A, B, C, X, Y, Z\}$. If $U_f(x_1, x_2, x_3, 0)$ is identical to \eqref{inde} or \eqref{As} and $U_f(x_1, x_2, 0, x_4)$ is isomorphic to \eqref{inde} or \eqref{As} then $U_f$ has induced degree at least 2.
\end{Lemma}

The proof involves analyzing the 1-skew functions for $n = 3$ and is presented in Appendix \ref{sec:forbiddenlemmaproof}. 
\subsubsection{Proof of Lemma \ref{line_extend}}

We now prove Lemma \ref{line_extend} which says that if $U_f : \Z_3^n \to \{A, B, C, X, Y, Z\}$ is a 1-skew function then $n \le 7$. We prove this using a proof by contradiction.

Assume there exists a 1-skew function $U_f : \Z_3^n \to \{A, B, C, X, Y, Z\}$ for some $n \ge 8$. Observe that for all $i \in [3,n]$, deleting coordinate $i$ of a 1-skew function $U_f : \Z_3^n \to \{A, B, C, X, Y, Z\}$ gives a function that is also 1-skew so we can assume that $n = 8$. For $i \in [3,8]$, let $H_i = \{x \in \Z_3^8 : x_i =  0 \ \forall i \in [8] \setminus \{1, 2, i\}\}$. We claim that there are at least 5 $H_i$s for which the restriction of $U_f$ to $H_i$ is isomorphic to \eqref{extra_point}. Suppose not, then there are $H_i \ne H_j$ such that both the restriction of $U_f$ to $H_i$ and the restriction of $U_f$ to $H_j$ are not isomorphic to \eqref{extra_point}. However, by Lemma \ref{forbidden}, this implies that $U_f$ has induced degree at least $2$, contradicting the assumption that it has induced degree $1$.

Now, let $R = \{(1, 1), (1, 2), (2, 1), (2, 2)\} \times 0^{6}$. Since for each $i$ such that the restriction of $U_f$ to $H_i$ is isomorphic to \eqref{extra_point}, there exists a $x \in R$ and a neighbor $y \in H_i$ of $x$ such that $U_f(x) = U_f(y)$, by the pigeonhole principle there is a $x \in R$ and distinct neighbors $y, y'$ of $x$ such that $U_f(x) = U_f(y) = U_f(y')$, contradicting the assumption that $U_f$ has induced degree 1.

\subsection{Growing the canonical set}
We now use Lemma \ref{SAT_line_extend} to prove that for $n \ge 6$, if $U_f : \Z_3^n \to \{A, B, C, X, Y, Z\}$ has induced degree 1 and $U_f(0^n) = X$ then there is a canonical set of dimension at least $n-4$ containing $0^n$.

\begin{Thm}
\label{large_canonical}
    Let $n \ge 6$ and $U_f : \Z_3^n \to \{A, B, C, X, Y, Z\}$. If $U_f$ has induced degree 1 and $U_f(0^n) = X$, then there exists $I \subseteq [n]$ such that $|I| \ge n-4$ and the restriction of $U_f$ to $H = \{x \in \Z_3^n : x_i = 0 \ \forall i \notin I\}$ is canonical.
\end{Thm}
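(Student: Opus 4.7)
The plan is to construct $I$ greedily, extending the canonical structure one direction at a time via Corollary~\ref{line_extend_general}. More precisely, I would prove by induction on $k$ that for every integer $k$ with $2 \le k \le n-6$, there exists a subset $I_k \subseteq [n]$ of size $k$ such that the restriction $U_f|_{\span(e_{I_k})}$ is canonical. The theorem then follows by taking $I = I_{n-6}$.

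The base case $k = 2$ is immediate from Lemma~\ref{line_extend}: since $n \ge 8$ and $U_f(0) = X$, fixing any $i \in [n]$ produces some $j \ne i$ with $U_f|_{\span(e_i, e_j)}$ canonical, and we set $I_2 = \{i, j\}$. For the inductive step, suppose $I_k$ has been constructed with $k \le n - 7$. Applying Corollary~\ref{line_extend_general} with $I := I_k$ (which satisfies $n - k \ge 7$), we obtain, for each $i \in I_k$, a direction $j_i \in [n] \setminus I_k$ such that $U_f|_{\span(e_i, e_{j_i})}$ is canonical. The remaining task is to pick $j \in [n] \setminus I_k$ so that $U_f|_{\span(e_{I_k} \cup \{j\})}$ is canonical; setting $I_{k+1} = I_k \cup \{j\}$ then closes the induction.

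The main obstacle is this promotion step: bootstrapping from the $k$-dimensional canonical structure on $\span(e_{I_k})$ together with a $2$-dimensional canonical structure on some $\span(e_i, e_j)$ to the full $(k+1)$-dimensional canonical structure on $\span(e_{I_k} \cup \{j\})$. My plan is to exploit the recursive decomposition of $D_{k+2}$ into three blocks $D_{k+1}, A_{k+1}, A_{k+1}$ along the new direction $e_j$, as in Definition~\ref{D_n}: the slice $x_j = 0$ of $\span(e_{I_k} \cup \{j\})$ equals $\span(e_{I_k})$, which is canonical by hypothesis, so it suffices to verify that the slices $x_j = 1$ and $x_j = 2$ each form a maximum independent set matching the $A_{k+1}$ pattern. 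To establish this, I would combine Lemma~\ref{canonical_path} (whose second part gives agreement of extra points and canonicality of intersections for canonical substructures containing direction $e_1$) with repeated applications of Corollary~\ref{line_extend_general} across different choices of $i \in I_k$; these generate enough canonical $2$-dimensional subrestrictions to pin down the values of $U_f$ on the two remaining slices and force them to match the required $A_{k+1}$ pattern. Iterating the induction from $k = 2$ up to $k = n-6$ yields the desired set $I$, which completes the proof.
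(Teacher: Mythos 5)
Your overall architecture (grow the canonical set one direction at a time, with the base case from Lemma~\ref{line_extend} and the dimension count $n-k\ge 7$ from Corollary~\ref{line_extend_general}) matches the paper's in substance, and your reduction of the promotion step to ``the slices $x_j=1$ and $x_j=2$ are identical maximum independent sets disjoint from the canonical slice $x_j=0$'' is the right target. But the promotion step itself --- which you correctly identify as the main obstacle --- is where your sketch has a genuine gap, and the tools you name are not the ones that close it. First, the choice of $i\in I_k$ cannot be arbitrary: the paper takes $i$ to be the \emph{popular direction} of the canonical restriction on $\span(e_{I_k})$, because only along that direction does the existing canonical set decompose as (canonical, $I$, $I$) with two identical independent sets in the slices $x_i=1$ and $x_i=2$; this is the hypothesis needed to start the propagation. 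Second, the propagation mechanism is Lemma~\ref{ind_extend}, which you never invoke: the $2$-dimensional canonical restriction on $\span(e_i,e_j)$ supplies the seed equalities (e.g.\ $U_f(e_i+e_j)=U_f(e_i+2e_j)$ and $U_f(2e_i+e_j)=U_f(2e_i+2e_j)$, or the transposed pattern), and Lemma~\ref{ind_extend} then forces the slices $\{x_i=1\}$ and $\{x_i=2\}$ of $\{x_j=1\}$ and $\{x_j=2\}$ to be identical independent sets. The tools you propose instead cannot do this job: Lemma~\ref{canonical_path}(2) only compares two canonical sets that already exist and intersect, and repeated applications of Corollary~\ref{line_extend_general} only produce canonical restrictions on $2$-dimensional coordinate planes through the origin, which constrain $U_f$ only at the $9$ points $ae_i+be_j$ --- far too sparse to ``pin down'' the values on entire $k$-dimensional slices.

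There is also a final step you omit entirely: after the propagation, the two blocks with $x_i=0$ and $x_j\in\{1,2\}$ (the $E$ and $F$ in the paper's diagrams) are not determined by Lemma~\ref{ind_extend}; they must be forced to equal $A_{k-1}$ using $1$-saturation (so they have size at least $3^{k-2}$), disjointness from the two independent sets already placed, and, in the second configuration, Proposition~\ref{unique_max_ind}. As a minor structural remark, the paper runs its induction on the ambient dimension $n$ (restricting to the hyperplane $x_{k+1}=0$ and growing by at most one direction when the inherited canonical set has dimension exactly $k-6$) rather than on $|I|$ directly; your variant would work equally well, but only once the promotion step is carried out with the popular direction and Lemma~\ref{ind_extend} as above.
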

Before proving Theorem \ref{large_canonical}, we need a few preliminary results. First, by restricting $U_f$ to an affine subset, we obtain the following corollary of Lemma \ref{forbidden}.

\begin{Cor}
\label{line_extend_general}
Let $U_f : \Z_3^n \to \{A, B, C, X, Y, Z\}$ be a function with induced degree 1 such that $U_f(0^n) = X$. Let $I = \{i_1, \dots, i_d\} \subseteq [n]$. If $n-d \ge 5$, then for each $i \in I$, there exists $j \notin I$ such that the restriction of $U_f$ to $\span(e_i, e_j)$ is canonical.
\end{Cor}

\begin{proof}
    Without loss of generality, let $i = i_1$ and $H = \{x \in \Z_3^n : x_{i_2} = x_{i_3} = \cdots = x_{i_d} = 0\}$. The dimension of $H$ is $n-d+1 \ge 6$. Since $0^n \in H$ and the induced degree is non-increasing under restriction, by Lemma \ref{SAT_line_extend} there exists $j \in [n] \setminus I$ such that the restriction of $U_f$ to $\span(e_i, e_j)$ is canonical, as desired. 
\end{proof}

We will also need the following lemma.
\begin{Lemma}
    \label{ind_extend}
    Let $U_f : \Z_3^n \to \{A, B, C, X, Y, Z\}$ with induced degree 1. If there exists $i \in [n]$ and an affine subset $H$ such that the restriction of $U_f$ to $H$ is an independent set $I$ and $U_f(x) = U_f(x+e_i)$ for all $x \in H$ then for all $j \in [n] \setminus \{i\}$,
    \begin{enumerate}
        \item If there exists an $x \in H$ such that $U_f(x+e_j) = U_f(x+e_j+e_i)$ then the restriction of $U_f$ to $H + e_j$ is an independent set disjoint from $I$ and $U_f(y) = U_f(y+e_i)$ for all $y \in H + e_j$.

        \item If there exists an $x \in H$ such that $U_f(x+e_j) = U_f(x+2e_j)$ then the restriction of $U_f$ to $H + e_j$ is an independent set disjoint from $I$ and $U_f(y) = U_f(y+e_j)$ for all $y \in H + e_j$.
    \end{enumerate}
\end{Lemma}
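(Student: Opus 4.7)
The plan is to exploit the rigid classification of proper $3$-colorings of Hamming subgraphs: once degree considerations force $U_f$ into $\{A,B,C\}$ on the new layers, any such coloring must be an affine-linear shift of the original coloring $a := U_f|_H$, and the seed hypothesis will pin down the shift. Throughout I write $a(x) := U_f(x)$ for $x \in H$.

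First I would deduce that for each $x \in H$, the hypothesis $U_f(x) = U_f(x+e_i) = a(x)$ produces a matched edge $(a(x), x)-(a(x), x+e_i)$ in $U$ that saturates both endpoints' degrees. Any $U$-vertex $(a(x), z)$ with $z \in \{x+e_j,\, x+e_j+e_i\}$ (Case 1) or $z \in \{x+e_j,\, x+2e_j\}$ (Case 2) would be adjacent to one endpoint and violate induced degree $1$, so $a(x)$ must be excluded from the corresponding $U_f$-values. To rule out the two-point values $X, Y, Z$: if, say, $U_f(x+e_j) = X$, the matched pair $(1, x+e_j), (2, x+e_j)$ uses both degrees, forcing $1, 2 \notin U_f(x+e_j+e_i)$ (Case 1) or $\notin U_f(x+2e_j)$ (Case 2); together with the $a(x)$-exclusion this empties the block, contradicting $1$-saturation. (When $a(x) \in \{B, C\}$ the value $X$ is already excluded directly; the analogous arguments dispose of $Y$ and $Z$.) The independent-set claim for $U_f|_{H+e_j}$ then follows by the same principle, as adjacent equal-colored points would produce an extra edge at degree-saturated vertices.

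The key structural step is to invoke \propref{coset_char}. The Hamming subgraph on $H$ is isomorphic to $H(\dim H, 3)$, so its proper $3$-colorings are affine-linear functions $a(x) = L(x) + d_a \pmod 3$ with linear part $L$ nonzero on every direction of $H$. The same classification applies to $b(y) := U_f(y)$ on $H+e_j$, yielding $b(x+e_j) = L'(x) + d_b$. The exclusion $b(x+e_j) \neq a(x)$ on $H$ makes the affine-linear function $b(\cdot+e_j) - a$ nowhere zero there; but any nonconstant affine-linear function over $\Z_3$ attains every value equally often (in particular $0$), so the difference must be a nonzero constant $c \in \{1,2\}$. Hence $L' = L$ and the coloring of $H+e_j$ is determined by the single value $b(x_0 + e_j)$. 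Applying the same analysis to $H+e_j+e_i$ (Case 1) or $H+2e_j$ (Case 2) yields an analogous shift constant $c^*$, and the seed hypothesis forces $c = c^*$; the two colorings therefore agree pointwise, giving $U_f(y) = U_f(y+e_i)$ in Case 1 and $U_f(y) = U_f(y+e_j)$ in Case 2. Disjointness from $I$ follows automatically since $e_j$ is not a direction of $H$ (else $H + e_j = H$ and the claimed disjointness would fail), so $H \cap (H+e_j) = \phi$.

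The main technical obstacle is the case-by-case exclusion of $X, Y, Z$ in the first step, where chaining the degree-$1$ constraint with $1$-saturation requires careful bookkeeping of which $U$-vertices are forbidden by each matched edge. Everything downstream --- the affine-linear classification and the seed-matching argument --- flows cleanly from \propref{coset_char} and the elementary observation about nonvanishing affine-linear functions over $\Z_3$.
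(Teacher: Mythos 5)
Your overall plan is genuinely different from the paper's (which runs a local forcing induction over the directions spanning $H$, determining $U_f$ at each point of $H+e_j$ from its already-determined neighbors), and the second half of your argument --- once all three layers are known to be maximum independent sets, the differences are nowhere-vanishing affine functions, hence nonzero constants pinned down by the seed --- is correct and clean. However, there is a genuine gap at the step where you invoke \propref{coset_char} for $b := U_f|_{H+e_j}$: that classification applies only to maximum independent sets, so you must first prove that $b$ is a proper coloring, and your justification for this (``adjacent equal-colored points would produce an extra edge at degree-saturated vertices'') is circular. The vertices over $H+e_j$ are only known to be degree-saturated once you have the conclusion $U_f(y)=U_f(y+e_i)$ (resp.\ $U_f(y)=U_f(y+e_j)$); before that, an edge between two adjacent equal-valued points of $H+e_j$ is perfectly compatible with induced degree $1$.

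Concretely, everything you actually establish in your first step is: $b(x)\in\{A,B,C\}$ and $b(x)\neq a(x)$ for every $x\in H$ (plus the analogous statements over $H+e_j+e_i$ or $H+2e_j$). This pointwise exclusion does not force $b$ to be a proper coloring: $b$ could equal $a+1$ at some points and $a+2$ at others, and on a line $\{x,x+e_k,x+2e_k\}$ with $a=(A,B,C)$ the assignment $b=(B,A,B)$ satisfies all your exclusions while putting two equal values at adjacent points. Ruling such configurations out is exactly the content of the lemma, and it is where the paper does its work: the induction on $K_m=\span(e_{i_1},\dots,e_{i_m})+x+e_j$ forces each value of $b$ (together with its direction-$i$ partner) from two already-saturated neighbors with distinct values, so that independence and the matching are established simultaneously, point by point, starting from the seed. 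To repair your proof you would need to replace the one-sentence independence claim with an argument of this kind --- at which point the affine classification becomes a pleasant but no longer essential finishing move.
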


\begin{proof}
    For item 1, suppose $H$ contains directions $i_1, \dots, i_d$ and let $K_m = \span(e_{i_1}, \dots, e_{i_m}) + x + e_j$. We show that $U_f(y) = U_f(y+e_i)$ for all $y \in K_m$ and $m \in [d]$ by induction on $m$.
    
    The base case $m = 0$ is trivial since $K_0 = \{x+e_j\}$ and $U_f(x+e_j) = U_f(x+e_j+e_i)$ by the assumption. For the inductive step, suppose $U_f(y) = U_f(y+e_i)$ for all $y \in K_k$ and consider $y \in K_{k+1}$. If $y \in K_k$, we are done. Otherwise, either $y+e_{k+1} \in K_k$ or $y+2e_{k+1} \in K_k$. 
    
    Suppose $y+e_{k+1} \in K_k$ (the case where $y+2e_{k+1} \in K_k$ is similar). By the inductive hypothesis, $U_f(y+e_{k+1}) = U_f(y+e_{k+1}+e_i)$. Since $U_f$ has induced degree 1, $U_f(y), U_f(y+2e_{k+1}) \ne U_f(y+e_{k+1})$. We have the following two cases:

    \begin{enumerate}
        \item If $U_f(y+e_{k+1}) \ne U_f(y-e_j)$ then  observe that $U_f(y+e_{k+1}) = U_f(y+e_{k+1}+e_i)$ and $U_f(y-e_j) = U_f(y-e_j+e_i)$ since $y-e_j \in H$. Thus, $U_f(y)$ and $U_f(y+e_i)$ must both be the element in $\{A, B, C\}$ which is not equal to $U_f(y+e_{k+1})$ or $U_f(y-e_j)$ so $U_f(y) = U_f(y+e_i)$.

        \item If $U_f(y+e_{k+1}) = U_f(y-e_j)$ then $U_f(y+e_{k+1}) \ne U_f(y+2e_{k+1}-e_j)$. Now observe that $U_f(y+e_{k+1}) = U_f(y+e_{k+1}+e_i)$ and $U_f(y + 2e_{k+1} - e_j) = U_f(y + 2e_{k+1} - e_j + e_i)$ since $y + 2e_{k+1} - e_j \in H$. Thus, $U_f(y+2e_{k+1}) = U_f(y+2e_{k+1}+e_i)$. Since $U_f(y+e_{k+1}) = U_f(y+e_{k+1}+e_i)$, this implies that $U_f(y) = U_f(y+e_i)$.
    \end{enumerate}
    
    For item 2, suppose $H$ contains directions $i_1, \dots, i_d$ and let $K_m = \span(e_{i_1}, \dots, e_{i_m}) + x + e_j$. We show that $U_f(y) = U_f(y+e_j)$ for all $y \in K_m$ and $m \in [d]$ by induction on $m$.

    The base case $m = 0$ is trivial since $K_0 = \{x+e_j\}$ and $U_f(x+e_j) = U_f(x+2e_j)$ by the assumption. For the inductive step, suppose $U_f(y) = U_f(y+e_i)$ for all $y \in K_k$ and consider $y \in K_{k+1}$. If $y \in K_k$, we are done. Otherwise, either $y+e_{k+1} \in K_k$ or $y+2e_{k+1} \in K_k$. 
    
    Suppose $y+e_{k+1} \in K_k$ (the case where $y+2e_{k+1} \in K_k$ is similar). By the inductive hypothesis, $U_f(y+e_{k+1}) = U_f(y+e_{k+1}+e_j)$. We have the following two cases:
    \begin{enumerate}
    \item If $U_f(y-e_j) \neq U_f(y+e_{k+1})$ then $U_f(y-e_j) = U_f(y-e_j+e_i) \neq U_f(y+e_{k+1}) = U_f(y+e_{k+1}+e_j)$. Thus, $U_f(y)$ and $U_f(y+e_j)$ must both be the element in $\{A, B, C\}$ which is not $U_f(y+e_{k+1})$ or $U_f(y-e_j)$ so $U_f(y) = U_f(y+e_j)$.
    \item If $U_f(y-e_j) = U_f(y+e_{k+1})$ then $U_f(y+e_{k+1} - e_j) = U_f(y+e_{k+1}-e_j+e_i) \neq U_f(y+e_{k+1}) = U_f(y+e_{k+1}+e_j)$ since $y + e_{k+1} - e_j \in H$. Thus, $U_f(y+e_{k+1}+e_i) = U_f(y+e_{k+1}+e_i+e_j)$. Now observe that $U_f(y-e_j) = U_f(y-e_j + e_i) \neq U_f(y+e_{k+1}+e_i) = U_f(y+e_{k+1}+e_i+e_j)$ as $U_f(y-e_j) = U_f(y+e_{k+1}) \neq U_f(y+e_{k+1}+e_i)$. This implies that $U_f(y+e_i) = U_f(y+e_i+e_j)$. Since $U_f(y-e_j) = U_f(y-e_j + e_i)$, this in turn implies that $U_f(y) = U_f(y+e_j)$, as needed.
    \end{enumerate} 
    In both cases, the conclusion holds so the proof is complete.
\end{proof}

We illustrate this lemma and its proof below. This lemma asserts that if there are two identical independent sets which are adjacent to each other in direction $i$ then if we consider the neighboring affine subsets in direction $j$, if there is a pair of identical elements in these neighboring affine subsets which are adjacent in direction $i$ or $j$ then this determines all of the entries of these neighboring affine subsets. Moreover, it forces these affine subsets to be identical independent sets which are adjacent in the same direction as the pair of identical elements.

In the figures below, the top two blocks are the two identical independent sets which are adjacent in direction $i$. In the figure on the left, the pair of $B$ in the first row of the middle blocks are an identical pair of elements which are adjacent in direction $i$. This pair determines the values in all of the gray cells and makes these affine subsets into identical independent sets which are adjacent in direction $i$. In the figure on the right, the pair of $B$ in the first column of the middle left and bottom left blocks are an identical pair of elements which are adjacent in direction $j$. This pair determines the values in all of the gray cells and makes these affine subsets into identical independent sets which are adjacent in direction $j$.

\[
    \begin{tabular}{c}
    \begin{tabular}{c c c c c c}
       & & & & $\overset{i}{\longrightarrow}$ &
    \end{tabular}\\
    \begin{tabular}{c}
        \\
        $\cdots$\\
        \\
    \end{tabular} \ 
    \begin{tabular}{|c|c|c|}
        \hline
        $A$ & $B$ & $C$\\
        \hline
        $B$ & $C$ & $A$\\
        \hline
        $C$ & $A$ & $B$\\
        \hline
    \end{tabular} \ 
    \begin{tabular}{|c|c|c|}
        \hline
        $A$ & $B$ & $C$\\
        \hline
        $B$ & $C$ & $A$\\
        \hline
        $C$ & $A$ & $B$\\
        \hline
    \end{tabular}\\
    \\
    \begin{tabular}{c}
        \\
        $\cdots$\\
        \\
    \end{tabular} \ 
    \begin{tabular}{|c|c|c|}
        \hline
        $B$ & \cellcolor{lightgray}$C$ & \cellcolor{lightgray}$A$\\
        \hline
        \cellcolor{lightgray}$C$ & \cellcolor{lightgray}$A$ & \cellcolor{lightgray}$B$\\
        \hline
        \cellcolor{lightgray}$A$ & \cellcolor{lightgray}$B$ & \cellcolor{lightgray}$C$\\
        \hline
    \end{tabular} \ 
    \begin{tabular}{|c|c|c|}
        \hline
        $B$ & \cellcolor{lightgray}$C$ & \cellcolor{lightgray}$A$\\
        \hline
        \cellcolor{lightgray}$C$ & \cellcolor{lightgray}$A$ & \cellcolor{lightgray}$B$\\
        \hline
        \cellcolor{lightgray}$A$ & \cellcolor{lightgray}$B$ & \cellcolor{lightgray}$C$\\
        \hline
    \end{tabular}\\
    \\
    \begin{tabular}{c}
        \\
        $\cdots$\\
        \\
    \end{tabular} \
    \begin{tabular}{|c|c|c|}
        \hline
        \cellcolor{lightgray}$C$ & \cellcolor{lightgray}$A$ & \cellcolor{lightgray}$B$\\
        \hline
        \cellcolor{lightgray}$A$ & \cellcolor{lightgray}$B$ & \cellcolor{lightgray}$C$\\
        \hline
        \cellcolor{lightgray}$B$ & \cellcolor{lightgray}$C$ & \cellcolor{lightgray}$A$\\
        \hline
    \end{tabular} \ 
    \begin{tabular}{|c|c|c|}
        \hline
        \cellcolor{lightgray}$C$ & \cellcolor{lightgray}$A$ & \cellcolor{lightgray}$B$\\
        \hline
        \cellcolor{lightgray}$A$ & \cellcolor{lightgray}$B$ & \cellcolor{lightgray}$C$\\
        \hline
        \cellcolor{lightgray}$B$ & \cellcolor{lightgray}$C$ & \cellcolor{lightgray}$A$\\
        \hline
    \end{tabular}
    \end{tabular} \text{ or}
    \begin{tabular}{c}
    \begin{tabular}{c c c c c c}
       & & & & $\overset{i}{\longrightarrow}$ &
    \end{tabular}\\
    \begin{tabular}{c}
        \\
        $\cdots$\\
        \\
    \end{tabular} \ 
    \begin{tabular}{|c|c|c|}
        \hline
        $A$ & $B$ & $C$\\
        \hline
        $B$ & $C$ & $A$\\
        \hline
        $C$ & $A$ & $B$\\
        \hline
    \end{tabular} \ 
    \begin{tabular}{|c|c|c|}
        \hline
        $A$ & $B$ & $C$\\
        \hline
        $B$ & $C$ & $A$\\
        \hline
        $C$ & $A$ & $B$\\
        \hline
    \end{tabular}\\
    \\
    \begin{tabular}{c}
        \\
        $\cdots$\\
        \\
    \end{tabular} \ 
    \begin{tabular}{|c|c|c|}
        \hline
        $B$ & \cellcolor{lightgray}$C$ & \cellcolor{lightgray}$A$\\
        \hline
        \cellcolor{lightgray}$C$ & \cellcolor{lightgray}$A$ & \cellcolor{lightgray}$B$\\
        \hline
        \cellcolor{lightgray}$A$ & \cellcolor{lightgray}$B$ & \cellcolor{lightgray}$C$\\
        \hline
    \end{tabular} \ 
    \begin{tabular}{|c|c|c|}
        \hline
        \cellcolor{lightgray}$C$ & \cellcolor{lightgray}$A$ & \cellcolor{lightgray}$B$\\
        \hline
        \cellcolor{lightgray}$A$ & \cellcolor{lightgray}$B$ & \cellcolor{lightgray}$C$\\
        \hline
        \cellcolor{lightgray}$B$ & \cellcolor{lightgray}$C$ & \cellcolor{lightgray}$A$\\
        \hline
    \end{tabular}\\
    \\
    \begin{tabular}{c}
        \\
        $\cdots$\\
        \\
    \end{tabular} \
    \begin{tabular}{|c|c|c|}
        \hline
        $B$ & \cellcolor{lightgray}$C$ & \cellcolor{lightgray}$A$\\
        \hline
        \cellcolor{lightgray}$C$ & \cellcolor{lightgray}$A$ & \cellcolor{lightgray}$B$\\
        \hline
        \cellcolor{lightgray}$A$ & \cellcolor{lightgray}$B$ & \cellcolor{lightgray}$C$\\
        \hline
    \end{tabular} \ 
    \begin{tabular}{|c|c|c|}
        \hline
        \cellcolor{lightgray}$C$ & \cellcolor{lightgray}$A$ & \cellcolor{lightgray}$B$\\
        \hline
        \cellcolor{lightgray}$A$ & \cellcolor{lightgray}$B$ & \cellcolor{lightgray}$C$\\
        \hline
        \cellcolor{lightgray}$B$ & \cellcolor{lightgray}$C$ & \cellcolor{lightgray}$A$\\
        \hline
    \end{tabular}
    \end{tabular}
\]
We now prove Theorem \ref{large_canonical}.

\begin{proof}[of Theorem \ref{large_canonical}]
We proceed by induction on $n$. For $n = 6$, Lemma \ref{SAT_line_extend} asserts that there exists $i \ne j$ such that $U_f$ is canonical on $\span(e_i, e_j)$.

Suppose the claim is true for functions on $\Z_3^k$ for some $k \ge 6$. Let $U_f$ be a function on $\Z_3^{k+1}$ that satisfies the assumption. Since the restriction of $U_f$ to $\{x \in \Z_3^{k+1} : x_{k+1} = 0\}\}$ is a function on $\Z_3^k$ and it also satisfies the assumption of the theorem, by the inductive hypothesis, there exists $I = \{i_1, \dots, i_d\} \subseteq [k]$ such that $d \ge k-4$ and the restriction of $U_f(x_1, \dots, x_k, 0)$ to $H = \{x \in \Z_3^{k} : x_i = 0 \ \forall i \notin I\}$ is canonical. If $d > k - 4$, we are done. However, if $d = k-4$, then $k+1 - d = 5$. Thus, by Corollary \ref{line_extend_general} for each $i \in I$, there exists $j \notin I$ such that the restriction of $U_f$ to $\span(e_i, e_j)$ is canonical. Let $i$ be the popular direction of the restriction of $U_f$ to $H$, and $j$ be the direction asserted by Corollary \ref{line_extend_general}. We claim that $U_f$ is canonical on $H' = \{x \in \Z_3^{k+1} : x_i = 0 \ \forall i \notin I'\}$ where $I' = I \cup \{j\}$.

Let $H'_{a, b} = H' \cap \{x \in \Z_3^{k+1} : x_i = a, x_j = b\}$ where $a, b \in \{0, 1, 2\}$. Since $U_f$ is canonical on $\span(e_i, e_j)$, we have either
\begin{eqnarray*}
U_f(e_i+e_j) &=& U_f(e_i+2e_j),\\
U_f(2e_i+e_j) &=& U_f(2e_i+2e_j).
\end{eqnarray*}
or
\begin{eqnarray*}
U_f(e_j+e_i) &=& U_f(e_j+2e_i),\\
U_f(2e_j+e_i) &=& U_f(2e_j+2e_i).
\end{eqnarray*}

Suppose the former case holds. By item 2 of Lemma \ref{ind_extend}, $U_f$ on $H'_{1,1}$ and $H'_{1,2}$ are a pair of identical independent sets. So $U$ is isomorphic to the following on $H'$:
\[
\begin{tabular}{c}
    \begin{tabular}{c c c}
         & $\overset{i}{\rightarrow}$ &
    \end{tabular}\\
    \begin{tabular}{c c c}
        $j \downarrow$ & \begin{tabular}{|c|c|c|} \hline $D_{d-1}$ & $A_{d-1}$ & $A_{d-1}$\\ \hline $E$ & $B_{d-1}$ & $C_{d-1}$\\ \hline $F$ & $B_{d-1}$ & $C_{d-1}$\\ \hline \end{tabular}  & \ \ \
    \end{tabular}
\end{tabular}
\]
where $B_{d-1}$ and $C_{d-1}$ are distinct independent sets of size $3^{d-2}$. Since $U$ has induced degree 1 and it is $1$-saturated, $E$ and $F$ must be disjoint from both $B_{d-1}$ and $C_{d-1}$ and have size at least $3^{d-2}$. It implies that $E = F = A_{d-1}$. Thus $U$ is canonical on $H'$.

For the latter case, by item 1 of Lemma \ref{ind_extend}, the restriction of $U$ to $H'$ is isomorphic to the following:
\[
\begin{tabular}{c}
    \begin{tabular}{c c c}
         & $\overset{i}{\rightarrow}$ &
    \end{tabular}\\
    \begin{tabular}{c c c}
        $j \downarrow$ & \begin{tabular}{|c|c|c|} \hline $D_{d-1}$ & $A_{d-1}$ & $A_{d-1}$\\ \hline $E$ & $B_{d-1}$ & $B_{d-1}$\\ \hline $F$ & $C_{d-1}$ & $C_{d-1}$\\ \hline \end{tabular}  & \ \ \
    \end{tabular}
\end{tabular}
\]
By Proposition \ref{unique_max_ind}, $E, F \in \{A_{d-1}, A_{d-1}'\}$. Since $E$ and $F$ are disjoint from $B_{d-1}$ and $C_{d-1}$ respectively, $E = A_{d-1}$ and $F = A_{d-1}$. Thus, $U_f$ is canonical on $H'$ and the proof is completed.
\end{proof}

We now prove Theorem \ref{constant_extra} which says that if $U \subseteq \Z_3^n$, $U$ has induced degree 1, and $U$ is $1$-saturated, then $|U| \le 3^{n-1} + 81$.

\begin{proof}[of Theorem \ref{constant_extra}]
    Let $U_f: \Z_3^{n-1} \to \{A, B, C, X, Y, Z\}$ be the function representation of $U$. For each $x \in \Z_3^{n-1}$ such that $U_f(x) \in \{X, Y, Z\}$, we can obtain an affine subset $H_x$ containing $x$ of dimension at least $n - 5$ on which $U_f$ is canonical by applying Theorem \ref{large_canonical} to the function $U_{f'}: \Z_3^{n-1} \to \{A, B, C, X, Y, Z\}$ where $f'(y) = f(y-x)$ (i.e., we apply a translation of $\Z_3^{n-1}$ to map $x$ to $0^{n-1}$ and then apply Theorem \ref{large_canonical}). We now observe that by Lemma \ref{canonical_path}, these affine subsets $H_x$ are disjoint for different $x$. To see this, observe that if $H_x \cap H_y \neq \phi$ for some $x,y \in \Z_3^{n-1}$ such that $U_f(x),U_f(y) \in \{X, Y, Z\}$ then by Lemma \ref{canonical_path}, the restrictions of $U_f$ to $H_x$ and $H_y$ have the same extra point so $x = y$.
    
    Since there can be at most $3^4 = 81$ disjoint affine subsets of $\Z_3^{n-1}$ of dimension at least $n - 5$, there are at most $81$ $x \in \Z_3^{n-1}$ such that $U_f(x) \in \{X, Y, Z\}$ which implies that $|U| \leq 3^{n-1} + 81$, as needed.
\end{proof}

\begin{remark}
    If we replace the use of Lemma \ref{SAT_line_extend} by Lemma \ref{line_extend} in this subsection, the same argument implies that there must be an affine subset $H_x$ of dimension at least $n-8$ on which $U_f$ is canonical which gives an upper bound of $3^{n-1} + 729$ rather than $3^{n-1} + 81$.
\end{remark}

\section{Conclusions and open problems}
In this paper, we analyzed the maximum size of a subset $U \subseteq \Z_3^{n}$ which induces a subgraph of the Hamming graph $H(n, 3)$ with maximum degree $1$. We showed that if $U$ is disjoint from a maximum size independent set then $|U| \leq \alpha(H(n, 3))+1$ but $U$ can be larger if $U$ is not disjoint from a maximum size independent set. In particular, for $n \geq 6$, there exists such a $U$ with size $\alpha(H(n, 3))+18$ and this is optimal when $n = 6$. We also showed that if $U$ is {\isat} for any $i \in [n]$ then $|U| \leq \alpha(H(n, 3)) + 729$.

The assumption of being $i$-saturated for some $i$ was motivated by the fact that it is a common property shared by all extremal subsets of $\Z_3^n$ for $n \in [6]$ and works well with SAT-solvers. We conjecture that similar results hold if we remove the assumption of being $i$-saturated for some $i \in [n]$ but this remains to be proven.
\begin{Conj}
    All induced degree 1 subsets of $\Z_3^n$ have size $\alpha(H(n,3))+O(1)$.
\end{Conj}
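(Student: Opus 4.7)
The plan is to bound the number of \emph{extra points} $x \in \Z_3^{n-1}$ with $U_f(x) \in \{X, Y, Z\}$ by $729 = 3^6$. Since $U$ is $1$-saturated with induced degree at most $1$, by the proposition preceding \secref{section:canonical} we may represent $U$ by a function $U_f : \Z_3^{n-1} \to \{A, B, C, X, Y, Z\}$ with $U = \bigcup_{x \in \Z_3^{n-1}}{(U_f(x) \times \{x\})}$. Counting points by block type gives
\[
|U| = 3^{n-1} + k, \qquad k = |\{x \in \Z_3^{n-1} : U_f(x) \in \{X, Y, Z\}\}|,
\]
since each $A$/$B$/$C$ block contributes one point and each $X$/$Y$/$Z$ block contributes two. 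It therefore suffices to show $k \leq 729$.

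For each extra point $x$, I would use automorphisms of $H(n,3)$ to translate $x$ to the origin of $\Z_3^{n-1}$ and, after adding an appropriate element in the first coordinate to interchange the labels among $\{X,Y,Z\}$, arrange $U_f(0) = X$. Then \thmref{large_canonical} produces an affine subset $H_x \subseteq \Z_3^{n-1}$ of dimension at least $(n-1) - 6 = n - 7$ containing $x$ on which the translated $U_f$ is canonical. Next I would invoke \lemmaref{disjoint}: if $x \neq y$ are two distinct extra points, then $H_x \cap H_y = \phi$, since by \lemmaref{canonical_path} any two canonical affine subsets that intersect must share the same extra point, forcing $x = y$.

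The conclusion is then a pigeonhole count. The sets $H_x$ form a family of pairwise disjoint affine subsets of $\Z_3^{n-1}$, each of dimension at least $n-7$, so their number is at most $3^{(n-1) - (n-7)} = 3^6 = 729$. Therefore $k \leq 729$ and $|U| \leq 3^{n-1} + 729$, as required. The heavy lifting has already gone into establishing the line extension lemma and its iteration culminating in \thmref{large_canonical}; assuming these results, the proof of \thmref{constant_extra} reduces to the disjointness plus counting argument above. The only mild subtlety is that \thmref{large_canonical} requires $n$ to be large enough for \lemmaref{line_extend} to apply; for small values of $n$ the bound $3^{n-1} + 729$ is either trivially satisfied (e.g.\ when $n \leq 6$, since $3^{n-1} + 729 \geq 3^n$) or may be handled by direct inspection of the remaining cases.
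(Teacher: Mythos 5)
The statement you were asked to prove is the paper's \emph{Conjecture}, which asserts the $\alpha(H(n,3))+O(1)$ bound for \emph{all} induced degree $1$ subsets of $\Z_3^n$, with no saturation hypothesis. Your very first step --- ``Since $U$ is $1$-saturated \dots we may represent $U$ by a function $U_f$'' --- assumes exactly the hypothesis that the conjecture is asking to remove. The paper states explicitly that this is the open part: the authors prove the bound $3^{n-1}+729$ only for $i$-saturated sets (Theorem \ref{constant_extra}) and then write ``We conjecture that similar results hold if we remove the assumption of being $i$-saturated for some $i \in [n]$ but this remains to be proven.'' What you have written is, essentially, the paper's own proof of Theorem \ref{constant_extra} (translate each extra point to the origin, apply Theorem \ref{large_canonical} to get a canonical affine subset of codimension at most $6$, use Lemma \ref{disjoint} for disjointness, and count), and as a proof of that theorem it is correct and matches the paper. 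But it does not touch the conjecture.

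The gap is not cosmetic. Without $1$-saturation the function representation $U_f : \Z_3^{n-1} \to \{A,B,C,X,Y,Z\}$ breaks down ($U_f(x)$ can be $\phi$ on some lines), the identity $|U| = 3^{n-1} + |\{x : U_f(x) \in \{X,Y,Z\}\}|$ fails, and the line extension lemma, Theorem \ref{large_canonical}, and Lemma \ref{canonical_path} all carry the $1$-saturation hypothesis (indeed the paper notes in the remark after Lemma \ref{canonical_path} that the intersection of two canonical sets need not be canonical without it, with a counterexample in Appendix \ref{sec:strangeintersection}). So the disjointness and counting machinery you invoke is simply not available in the unsaturated setting, and no argument in the paper supplies it. The conjecture remains open.
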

We can also ask what happens if we consider subsets of $\Z_3^n$ with larger induced degree.
\begin{Question}
    Given $d,n \in \mathbb{N}$, what is the largest subset of $\Z_3^n$ with induced degree at most $d$?
\end{Question}
We observe that there is a nice construction which has at least $3^{\lfloor\frac{(d-1)n}{d}\rfloor}$ extra points. That said, it is possible that there are larger constructions.
\begin{Lemma}
For all $d,n \in \mathbb{N}$, there is a subset $U$ of $\Z_3^n$ with induced degree at most $d$ such that $|U| \ge 3^{n-1} + 3^{\lfloor\frac{(d-1)n}{d}\rfloor}$ and $U$ is disjoint from a maximum size independent set of $H(n,3)$.
\end{Lemma}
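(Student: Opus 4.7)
My plan is to build $U$ as the preimage of the canonical set $D_k$ under a ``block-sum'' projection. First I partition $[n]$ into $k := \lceil n/d \rceil$ blocks $G_1, \ldots, G_k$ with $|G_\ell| \le d$ for each $\ell$ (which is possible since $kd \ge n$), and I define the linear map $\pi : \Z_3^n \to \Z_3^k$ by $\pi(x)_\ell := \sum_{i \in G_\ell} x_i$. I then set $U := \pi^{-1}(D_k)$, where $D_k \subseteq \Z_3^k$ is the canonical set of \defref{D_n}.

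Size and disjointness will fall out by direct computation. Each fiber of $\pi$ factors as the product over $\ell$ of the fibers of the block maps $\Z_3^{|G_\ell|} \to \Z_3$, each of size $3^{|G_\ell|-1}$, so every fiber of $\pi$ has cardinality $3^{n-k}$. Thus $|U| = 3^{n-k} \cdot |D_k| = 3^{n-k}(3^{k-1} + 1) = 3^{n-1} + 3^{n-k}$, and $n - k = \lfloor (d-1)n/d \rfloor$. Moreover the identity $\sum_\ell \pi(x)_\ell = \sum_i x_i$ gives $\pi^{-1}(A_k) = A_n$, and since $D_k \cap A_k = \emptyset$ we conclude $U \cap A_n = \emptyset$, so $U$ is disjoint from the maximum size independent set $A_n$ of $H(n,3)$.

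The content of the argument is the induced degree bound. Fix $x \in U$ and let $\tilde x := \pi(x) \in D_k$. Every neighbor of $x$ in $H(n,3)$ has the form $x + \alpha e_i$ for some $\alpha \in \{1, 2\}$ and $i \in [n]$, and by linearity $\pi(x + \alpha e_i) = \tilde x + \alpha\, \pi(e_i)$. Since $\pi(e_i)$ is the standard basis vector of $\Z_3^k$ indexed by the unique block $\ell(i)$ containing $i$, the values $\pi(x + \alpha e_i)$ as $(i, \alpha)$ varies range exactly over the Hamming neighbors of $\tilde x$ in $\Z_3^k$. The induced degree of $D_k$ being at most $1$ then means that at most one such neighbor of $\tilde x$ lies in $D_k$; if it does, say corresponding to a unique pair $(\ell^*, \alpha^*)$, then the neighbors of $x$ inside $U$ are precisely $\{x + \alpha^* e_i : i \in G_{\ell^*}\}$, of cardinality $|G_{\ell^*}| \le d$.

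I do not anticipate any serious obstacle; the whole argument hinges on recognizing $\pi$ as a graph homomorphism from $H(n,3)$ onto $H(k,3)$ whose fibers ``bundle'' each single edge at $\tilde x$ into a set of $|G_\ell|$ parallel edges at $x$, exactly absorbing the factor of $d$ in the degree bound. The construction recovers the known extremes: for $d = 1$ we have $k = n$ and $\pi = \mathrm{id}$, giving $U = D_n$; for $d \ge n$ we have $k = 1$ and $D_1 = \{1, 2\}$, giving $U = \pi^{-1}(\{1, 2\}) = \Z_3^n \setminus A_n = B_n \cup C_n$.
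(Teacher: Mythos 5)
Your proof is correct. It is worth noting that your set is in fact the same set as the paper's: the paper builds $U$ by a recursion in steps of $d$ coordinates (replacing the points of $A_d$ by copies of $U_{n-d}$ and the remaining points by copies of $A_{n-d}$), and unrolling that recursion yields exactly $\pi^{-1}(D_k)$ for a block-sum map $\pi$ with blocks of size $d$ (your $k=\lceil n/d\rceil$ agrees with the paper's count via $n-\lceil n/d\rceil=\lfloor (d-1)n/d\rfloor$). What differs is the argument. The paper proves the degree bound by induction and leaves the verification implicit (``it is not hard to verify''), whereas you observe once and for all that $\pi$ is a graph homomorphism onto $H(k,3)$ whose fibers turn the single matching edge at $\pi(x)\in D_k$ into at most $|G_{\ell^*}|\le d$ parallel edges at $x$; this makes the degree bound, the size computation $|U|=3^{n-k}|D_k|$, and the disjointness $\pi^{-1}(A_k)=A_n$ all immediate, at the cost of invoking the previously established facts that $D_k$ has induced degree $1$, size $3^{k-1}+1$, and is disjoint from $A_k$. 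Your closing sanity checks (recovering $D_n$ when $d=1$ and $B_n\cup C_n$ when $d\ge n$) match the paper's base case.
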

\begin{proof}
We prove this lemma by induction. If $n \leq d$, we can take $U = \Z_3^n \setminus A_n$ and we will have that $|U| = 3^{n-1} + 3^{n-1} \ge 3^{n-1} + 3^{\lfloor\frac{(d-1)n}{d}\rfloor}$ and $U$ is disjoint from $A_n$.

If $n > d$ then by the inductive hypothesis, there is a subset $U_{n-d} \subseteq \Z_3^{n-d}$ of size at least $3^{n-1} + 3^{\lfloor\frac{(d-1)n}{d}\rfloor - (d-1)}$ which is disjoint from $A_{n-d}$. We can now do the following:
\begin{enumerate}
\item Start with the independent set $A_d$.
\item Replace each of the $3^{d-1}$ points in $A_d$ with a copy of $U_{n-d}$ and replace each point which is not in $A_d$ with a copy of $A_{n-d}$.
\end{enumerate}
It is not hard to verify that this subset has size at least $3^{n-1} + 3^{\lfloor\frac{(d-1)n}{d}\rfloor}$ and is disjoint from the independent set $A_n$.

We illustrate this construction for $d = 2$ below. When $d = 2$, we have that \\
$U_2 = 
    \begin{tabular}{|c|c|c|}
        \hline
        \ \ \  & $\bullet$ & $\bullet$ \\
        \hline
        $\bullet$ & $\bullet$  & \\
        \hline
        $\bullet$ &  & $\bullet$ \\
        \hline
    \end{tabular}$ and for $k \geq 1$, 
$U_{2(k+1)} = 
    \begin{tabular}{|c|c|c|}
        \hline
        $U_{2k}$ & $A_{2k}$ & $A_{2k}$ \\
        \hline
        $A_{2k}$ & $A_{2k}$ & $U_{2k}$ \\
        \hline
        $A_{2k}$ & $U_{2k}$ & $A_{2k}$ \\
        \hline
    \end{tabular}$.
\end{proof}

\section*{Acknowledgement}
We would like to thank Andrew Drucker for helpful discussions and the anonymous reviewer for comments and suggestions which helped improve the presentation of our paper. This research was supported by NSF grant CCF: 2008920.

\bibliographystyle{acm}
\bibliography{bibliography.bib}

\appendix
\newpage
\section{Uniqueness proof for $n = 4$}
\label{unique4}
In this section, we prove the following theorem.
\begin{Thm}
Up to isomorphism, there is a unique set $X \subseteq \mathbb{Z}_3^{4}$, $X$ has maximum induced degree $1$, and $|X| = 3^3 + 2$.
\end{Thm}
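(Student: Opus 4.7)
The plan is to analyze $X$ by slicing along one coordinate direction into three subsets of $\Z_3^3$ and case-analyzing how they fit together. First, since $|X| = 3^3 + 2 = 29 > 3^3 + 1 = 28$, \thmref{uniqueDn} forces $X$ to intersect every maximum size independent set of $H(4,3)$. For any coordinate $i \in [4]$, write $X = X_0 \cup X_1 \cup X_2$ where $X_j \subseteq \Z_3^3$ is the slice at $x_i = j$. Each slice has induced degree at most $1$, so (by the $n = 3$ case recorded in \secref{section:lower}) has size at most $\alpha(H(3,3)) + 1 = 10$; since the sizes sum to $29$, they form the multiset $\{10, 10, 9\}$.

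As noted in \secref{section:lower}, there are exactly two isomorphism classes of $10$-vertex induced-degree-$1$ subsets of $\Z_3^3$: the canonical set $D_3$, which is disjoint from a maximum independent set and is $1$-regular; and a second type, call it $E_3$, which is not isomorphic to $D_3$ and therefore (by \thmref{uniqueDn} applied to $n=3$) meets every maximum independent set. A representative of $E_3$ is the middle block of the extremal set $X_4$ from \secref{section:lower}; crucially, $E_3$ contains vertices of within-slice degree $0$ in addition to a single pair joined by an internal edge. WLOG, apply an automorphism of $H(4,3)$ so that slicing along coordinate $4$ gives $(|X_0|, |X_1|, |X_2|) = (10, 10, 9)$. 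The key constraint is that if a position $v \in \Z_3^3$ lies in two distinct slices $X_a$ and $X_b$, then $(v, a)$ and $(v, b)$ are adjacent in $H(4,3)$; this cross-slice edge consumes the single allowed edge at each endpoint, so such a $v$ cannot have a within-slice neighbor in either $X_a$ or $X_b$.

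I would then case-split on the isomorphism types of $X_0$ and $X_1$. The case where both are isomorphic to $D_3$ is ruled out immediately: since $D_3$ is $1$-regular, the three slices must be pairwise disjoint in $\Z_3^3$, forcing $29 = |X| \le |\Z_3^3| = 27$, a contradiction. The remaining cases $(D_3, E_3)$, $(E_3, D_3)$, and $(E_3, E_3)$ require tracking the location of the extra point in the $D_3$-slice and the location of the internal edge and degree-$0$ vertices in each $E_3$-slice, then using the cross-slice edge constraint to match them up. Once $X_0$ and $X_1$ are determined up to isomorphism, degree bookkeeping forces $X_2$ to be an independent set of $\Z_3^3$, and since $|X_2| = 9 = \alpha(H(3,3))$, it is a maximum independent set, hence a hyperplane by \propref{coset_char}; a compatibility check against $X_0 \cup X_1$ picks this hyperplane out uniquely up to isomorphism, and the resulting configuration must match $X_4$. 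The main obstacle is the casework on the $E_3$-containing configurations, which requires careful tracking of extra points, degree-$0$ vertices, and their cross-slice alignment to verify that only the $X_4$ arrangement survives.
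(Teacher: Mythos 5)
Your slicing is genuinely different from the paper's: the paper cuts $\Z_3^4$ into nine $3\times 3$ blocks, uses \propref{prop:noadjacentfours} to show no two parallel blocks can both carry $4$ points, reduces to two count-patterns for the nine blocks, and finishes by direct forbidden-cell propagation; you instead cut into three $\Z_3^3$ slices of sizes $\{10,10,9\}$ and classify the two $10$-point slices. Your skeleton is sound up to a point --- the $\{10,10,9\}$ count, the observation that a position occurring in two slices spends the unique allowed edge of both points, and the elimination of the $(D_3,D_3)$ case are all correct. But the proof has a genuine gap: the decisive step, ruling out the $(D_3,E_3)$ configuration and showing the $(E_3,E_3)$ configuration completes uniquely to $X_4$, is deferred as ``the main obstacle'' and never carried out, and that is where essentially all of the work in this theorem lives.

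Worse, two premises you would need for that deferred casework are off. You describe $E_3$ as having ``a single pair joined by an internal edge'' plus isolated vertices; in fact, taking the middle block of the $U_f$-representation of $X_4$ as the representative, the $Y$ cell gives one internal edge but the two pairs of equal $B$'s in its row and column give two more, so $E_3$ has three edges, six degree-$1$ vertices, and only \emph{four} degree-$0$ vertices. Since only degree-$0$ vertices of a slice can absorb cross-slice edges, miscounting them (eight free vertices instead of four) breaks the matching arithmetic your case analysis would rest on. You also invoke, without proof, the classification of $10$-point induced-degree-$1$ subsets of $\Z_3^3$ into exactly the two classes $D_3$ and $E_3$; this is asserted in \secref{section:lower} but not proved anywhere in the paper, and the paper's own argument is structured precisely so as not to need it. Finally, ``degree bookkeeping forces $X_2$ to be an independent set'' is asserted rather than derived: a priori the $9$-point slice could contain internal edges among positions not covered by $X_0\cup X_1$, and excluding that possibility again requires the position-by-position analysis you have omitted.
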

\begin{proof}
We say that two distinct affine subsets $H$ and $H'$ are \emph{parallel} if $H' = H \pm e_j$ for some $j \in [n]$.

\begin{Prop}\label{prop:noadjacentfours}
For any $n \geq 3$ and any set $X \subseteq \mathbb{Z}_3^{n}$, if there are two parallel affine subsets of dimension $2$ (i.e., $3 \times 3$ blocks) which each contain at least $4$ points of $X$ then the maximum induced degree of $X$ is at least $2$.
\end{Prop}
\begin{proof}
Assume that $X \subseteq \mathbb{Z}_3^{n}$, there are two parallel affine subsets of dimension $2$ (i.e., $3 \times 3$ blocks) which each contain at least $4$ points of $X$, and the maximum induced degree of $X$ is at most $1$. 

Up to isomorphism, there is only one subset of $\mathbb{Z}_3^2$ of size $4$ with maximum induced degree at most $1$ so without loss of generality we can assume that one of the affine subsets is as follows.
\[
    \begin{tabular}{|c|c|c|}
        \hline
          & $\bullet$ & $\bullet$\\
        \hline
        $\bullet$ &  & \\
        \hline
        $\bullet$ &  & \\
        \hline
    \end{tabular}
\]
Since the maximum induced degree of $X$ is $1$, in the second parallel affine subset, $X$ cannot contain any of the points shown in gray.
\[
    \begin{tabular}{|c|c|c|}
        \hline
          & \cellcolor{lightgray} \  & \cellcolor{lightgray} \ \\
        \hline
        \cellcolor{lightgray} \  &  & \\
        \hline
        \cellcolor{lightgray} \  &  & \\
        \hline
    \end{tabular}
\]
Since $X$ can contain at most three of the remaining points, $X$ cannot contain $4$ points of this affine subset which gives a contradiction. 
\end{proof}

We now consider the possible ways for the points of $X$ to be divided up when we split $\mathbb{Z}_3^4$ into nine $3 \times 3$ blocks. By Proposition \ref{prop:noadjacentfours}, if $X$ has maximum induced degree $1$ and $|X| = 3^3 + 2 = 29$ then up to isomorphism, the only possibilities are as follows.
\[
    \begin{tabular}{|c|c|c|}
        \hline
        $4$ & $3$ & $3$\\
        \hline
        $3$ & $4$ & $3$\\
        \hline
        $3$ & $3$ & $3$\\
        \hline
    \end{tabular} \text{ or }     \begin{tabular}{|c|c|c|}
        \hline
        $4$ & $3$ & $3$\\
        \hline
        $3$ & $4$ & $3$\\
        \hline
        $2$ & $3$ & $4$\\
        \hline
    \end{tabular}
\]
If we consider the possibilities for two of the $3 \times 3$ blocks with four points in $X$ then up to isomorphism, there are three possibilities.

First, these $3 \times 3$ blocks may be the same. In this case, if we look at the six neighboring blocks, $X$ cannot contain any of the points shown in gray.
    \[
    \begin{tabular}{c}
    
    \begin{tabular}{|c|c|c|}
        \hline
        \ \ \  & $\bullet$ & $\bullet$ \\
        \hline
        $\bullet$ & \ \ \  & \\
        \hline
        $\bullet$ &  & \ \ \ \\
        \hline
    \end{tabular} \ 
    \begin{tabular}{|c|c|c|}
        \hline
         \ \ \  & \cellcolor{lightgray} & \cellcolor{lightgray} \\
        \hline
        \cellcolor{lightgray} & \ \ \  & \\
        \hline
        \cellcolor{lightgray} &  & \ \ \ \\
        \hline
    \end{tabular} \ 
    \begin{tabular}{|c|c|c|}
        \hline
         \ \ \  & \cellcolor{lightgray} & \cellcolor{lightgray} \\
        \hline
        \cellcolor{lightgray} & \ \ \  & \\
        \hline
        \cellcolor{lightgray} &  & \ \ \ \\
        \hline
    \end{tabular} \\ 
    \\
    \begin{tabular}{|c|c|c|}
        \hline
         \ \ \  & \cellcolor{lightgray} & \cellcolor{lightgray} \\
        \hline
        \cellcolor{lightgray} & \ \ \  & \\
        \hline
        \cellcolor{lightgray} &  & \ \ \ \\
        \hline
    \end{tabular} \ 
    \begin{tabular}{|c|c|c|}
        \hline
        \ \ \  & $\bullet$ & $\bullet$ \\
        \hline
        $\bullet$ & \ \ \  & \\
        \hline
        $\bullet$ &  & \ \ \ \\
        \hline
    \end{tabular} \ 
    \begin{tabular}{|c|c|c|}
        \hline
         \ \ \  & \cellcolor{lightgray} & \cellcolor{lightgray} \\
        \hline
        \cellcolor{lightgray} & \ \ \  & \\
        \hline
        \cellcolor{lightgray} &  & \ \ \ \\
        \hline
    \end{tabular}\\
    \\
    \begin{tabular}{|c|c|c|}
        \hline
         \ \ \  & \cellcolor{lightgray} & \cellcolor{lightgray} \\
        \hline
        \cellcolor{lightgray} & \ \ \  & \\
        \hline
        \cellcolor{lightgray} &  & \ \ \ \\
        \hline
    \end{tabular} \ 
    \begin{tabular}{|c|c|c|}
        \hline
         \ \ \  & \cellcolor{lightgray} & \cellcolor{lightgray} \\
        \hline
        \cellcolor{lightgray} & \ \ \  & \\
        \hline
        \cellcolor{lightgray} &  & \ \ \ \\
        \hline
    \end{tabular} \ 
    \begin{tabular}{|c|c|c|}
        \hline
         \ \ \  &  &  \\
        \hline
         & \ \ \  & \\
        \hline
         &  & \ \ \ \\
        \hline
    \end{tabular}
    
    \end{tabular}
\]
Observe that $X$ can contain at most $4$ of the six upper left corners so at least two of these neighboring $3 \times 3$ blocks can only contain $2$ points of $X$. This is impossible as all but one of these $3 \times 3$ blocks must have at least $3$ points of $X$.

The second possibility is that one of the $3 \times 3$ blocks is obtained from the other by either swapping the row containing two points of $X$ with a row containing one point of $X$ or swapping the column containing two points of $X$ with a column containing one point of $X$ (but not both). In this case, if we look at the two blocks which neighbor both of these blocks, the points shown in gray cannot be in $X$ so both of these blocks contain at most $2$ points of $X$. Again, this gives a contradiction.

    \[
    \begin{tabular}{c}
    
    \begin{tabular}{|c|c|c|}
        \hline
        \ \ \  & $\bullet$ & $\bullet$ \\
        \hline
        $\bullet$ & \ \ \  & \\
        \hline
        $\bullet$ &  & \ \ \ \\
        \hline
    \end{tabular} \ 
    \begin{tabular}{|c|c|c|}
        \hline
         \cellcolor{lightgray} \ \  & \cellcolor{lightgray} & \cellcolor{lightgray} \\
        \hline
        \cellcolor{lightgray} & \cellcolor{lightgray} & \cellcolor{lightgray} \\
        \hline
        \cellcolor{lightgray} &  \ \ \  & \ \ \ \\
        \hline
    \end{tabular} \ 
    \begin{tabular}{|c|c|c|}
        \hline
         \ \ \  &  &  \\
        \hline
         & \ \ \  & \\
        \hline
         &  & \ \ \ \\
        \hline
    \end{tabular} \\ 
    \\
    \begin{tabular}{|c|c|c|}
        \hline
         \cellcolor{lightgray} \ \  & \cellcolor{lightgray} & \cellcolor{lightgray} \\
        \hline
        \cellcolor{lightgray} & \cellcolor{lightgray} & \cellcolor{lightgray} \\
        \hline
        \cellcolor{lightgray} &  \ \ \  & \ \ \ \\
        \hline
    \end{tabular} \ 
    \begin{tabular}{|c|c|c|}
        \hline
        $\bullet$ & \ \ \  & \\
        \hline
        \ \ \  & $\bullet$ & $\bullet$ \\
        \hline
        $\bullet$ &  & \ \ \ \\
        \hline
    \end{tabular} \ 
    \begin{tabular}{|c|c|c|}
        \hline
         \ \ \  &  &  \\
        \hline
         & \ \ \  & \\
        \hline
         &  & \ \ \ \\
        \hline
    \end{tabular}\\
    \\
    \begin{tabular}{|c|c|c|}
        \hline
         \ \ \  &  &  \\
        \hline
         & \ \ \  & \\
        \hline
         &  & \ \ \ \\
        \hline
    \end{tabular} \ 
    \begin{tabular}{|c|c|c|}
        \hline
         \ \ \  &  &  \\
        \hline
         & \ \ \  & \\
        \hline
         &  & \ \ \ \\
        \hline
    \end{tabular} \ 
    \begin{tabular}{|c|c|c|}
        \hline
         \ \ \  &  &  \\
        \hline
         & \ \ \  & \\
        \hline
         &  & \ \ \ \\
        \hline
    \end{tabular}
    
    \end{tabular}
\]
The third possibility is that the one of the $3 \times 3$ blocks is obtained from the other by swapping the row containing two points of $X$ with a row containing one point of $X$ and swapping the column containing two points of $X$ with a column containing one point of $X$.

We now show that there is no way for the points of $X$ to have the division     \begin{tabular}{|c|c|c|}
        \hline
        $4$ & $3$ & $3$\\
        \hline
        $3$ & $4$ & $3$\\
        \hline
        $2$ & $3$ & $4$\\
        \hline
    \end{tabular}
and up to isomorphism, there is a unique way to have the division 
    \begin{tabular}{|c|c|c|}
        \hline
        $4$ & $3$ & $3$\\
        \hline
        $3$ & $4$ & $3$\\
        \hline
        $3$ & $3$ & $3$\\
        \hline
    \end{tabular}. To see that there is no way to have the division \begin{tabular}{|c|c|c|}
        \hline
        $4$ & $3$ & $3$\\
        \hline
        $3$ & $4$ & $3$\\
        \hline
        $2$ & $3$ & $4$\\
        \hline
    \end{tabular}, observe that if each pair of $3 \times 3$ blocks with $4$ points of $X$ satisfies the third possibility then up to ismorphism, these points must be arranged as follows.
    \[
    \begin{tabular}{c}
    
    \begin{tabular}{|c|c|c|}
        \hline
        \ \ \  & $\bullet$ & $\bullet$ \\
        \hline
        $\bullet$ & \ \ \  & \\
        \hline
        $\bullet$ &  & \ \ \ \\
        \hline
    \end{tabular} \ 
    \begin{tabular}{|c|c|c|}
        \hline
         \ \ \  & \cellcolor{lightgray} & \cellcolor{lightgray} \\
        \hline
        \cellcolor{lightgray} & \ \ \  & \cellcolor{lightgray} \\
        \hline
        \cellcolor{lightgray} & \cellcolor{lightgray} & \ \ \ \\
        \hline
    \end{tabular} \ 
    \begin{tabular}{|c|c|c|}
        \hline
         \ \ \  & \cellcolor{lightgray} & \cellcolor{lightgray} \\
        \hline
        \cellcolor{lightgray} & \ \ \  & \cellcolor{lightgray} \\
        \hline
        \cellcolor{lightgray} & \cellcolor{lightgray} & \ \ \ \\
        \hline
    \end{tabular} \\ 
    \\
    \begin{tabular}{|c|c|c|}
        \hline
         \ \ \  & \cellcolor{lightgray} & \cellcolor{lightgray} \\
        \hline
        \cellcolor{lightgray} & \ \ \  & \cellcolor{lightgray} \\
        \hline
        \cellcolor{lightgray} & \cellcolor{lightgray} & \ \ \ \\
        \hline
    \end{tabular} \ 
    \begin{tabular}{|c|c|c|}
        \hline
        \ \ \  & $\bullet$ &  \\
        \hline
        $\bullet$ & \ \ \  & $\bullet$\\
        \hline
         & $\bullet$ & \ \ \ \\
        \hline
    \end{tabular} \ 
    \begin{tabular}{|c|c|c|}
        \hline
         \ \ \  & \cellcolor{lightgray} & \cellcolor{lightgray} \\
        \hline
        \cellcolor{lightgray} & \ \ \  & \cellcolor{lightgray} \\
        \hline
        \cellcolor{lightgray} & \cellcolor{lightgray} & \ \ \ \\
        \hline
    \end{tabular} \\
    \\
    \begin{tabular}{|c|c|c|}
        \hline
         \ \ \  & \cellcolor{lightgray} & \cellcolor{lightgray} \\
        \hline
        \cellcolor{lightgray} & \ \ \  & \cellcolor{lightgray} \\
        \hline
        \cellcolor{lightgray} & \cellcolor{lightgray} & \ \ \ \\
        \hline
    \end{tabular} \ 
    \begin{tabular}{|c|c|c|}
        \hline
         \ \ \  & \cellcolor{lightgray} & \cellcolor{lightgray} \\
        \hline
        \cellcolor{lightgray} & \ \ \  & \cellcolor{lightgray} \\
        \hline
        \cellcolor{lightgray} & \cellcolor{lightgray} & \ \ \ \\
        \hline
    \end{tabular} \ 
    \begin{tabular}{|c|c|c|}
        \hline
         \ \ \  &  & $\bullet$ \\
        \hline
         & \ \ \  & $\bullet$ \\
        \hline
        $\bullet$ & $\bullet$ & \ \ \ \\
        \hline
    \end{tabular}
    
    \end{tabular}
\]
Observe that $X$ can only contain four of the six points in the upper left corners of the six remaining blocks. Similarly, $X$ can only contain $4$ of the $6$ middle points of these blocks and $X$ can only contain $4$ of the $6$ bottom right points of these blocks.

Finally, we consider the case where the points of $X$ are divided as     \begin{tabular}{|c|c|c|}
        \hline
        $4$ & $3$ & $3$\\
        \hline
        $3$ & $4$ & $3$\\
        \hline
        $3$ & $3$ & $3$\\
        \hline
    \end{tabular} and the pair of blocks with four points of $X$ satisfy the third possibility. In this case, up to isomorphism, these blocks must have the following points. This implies that the points shown in gray cannot be in $X$.
        \[
    \begin{tabular}{c}
    
    \begin{tabular}{|c|c|c|}
        \hline
        \ \ \  & $\bullet$ & $\bullet$ \\
        \hline
        $\bullet$ & \ \ \  & \\
        \hline
        $\bullet$ &  & \ \ \ \\
        \hline
    \end{tabular} \ 
    \begin{tabular}{|c|c|c|}
        \hline
         \ \ \  & \cellcolor{lightgray} & \cellcolor{lightgray} \\
        \hline
        \cellcolor{lightgray} & \ \ \  & \cellcolor{lightgray} \\
        \hline
        \cellcolor{lightgray} & \cellcolor{lightgray} & \ \ \ \\
        \hline
    \end{tabular} \  
    \begin{tabular}{|c|c|c|}
        \hline
         \ \ \  & \cellcolor{lightgray} & \cellcolor{lightgray} \\
        \hline
        \cellcolor{lightgray} & \ \ \  &  \\
        \hline
        \cellcolor{lightgray} &  & \ \ \ \\
        \hline
    \end{tabular}\\ 
    \\
    \begin{tabular}{|c|c|c|}
        \hline
         \ \ \  & \cellcolor{lightgray} & \cellcolor{lightgray} \\
        \hline
        \cellcolor{lightgray} & \ \ \  & \cellcolor{lightgray} \\
        \hline
        \cellcolor{lightgray} & \cellcolor{lightgray} & \ \ \ \\
        \hline
    \end{tabular} \ 
    \begin{tabular}{|c|c|c|}
        \hline
        \ \ \  & $\bullet$ &  \\
        \hline
        $\bullet$ & \ \ \  & $\bullet$\\
        \hline
         & $\bullet$ & \ \ \ \\
        \hline
    \end{tabular} \ 
    \begin{tabular}{|c|c|c|}
        \hline
         \ \ \  & \cellcolor{lightgray} &  \\
        \hline
        \cellcolor{lightgray} & \ \ \  & \cellcolor{lightgray} \\
        \hline
         & \cellcolor{lightgray} & \ \ \ \\
        \hline
    \end{tabular} \\
    \\
    \begin{tabular}{|c|c|c|}
        \hline
         \ \ \  & \cellcolor{lightgray} & \cellcolor{lightgray} \\
        \hline
        \cellcolor{lightgray} & \ \ \  &  \\
        \hline
        \cellcolor{lightgray} &  & \ \ \ \\
        \hline
    \end{tabular} \ 
    \begin{tabular}{|c|c|c|}
        \hline
         \ \ \  & \cellcolor{lightgray} &  \\
        \hline
        \cellcolor{lightgray} & \ \ \  & \cellcolor{lightgray} \\
        \hline
         & \cellcolor{lightgray} & \ \ \ \\
        \hline
    \end{tabular} \ 
    \begin{tabular}{|c|c|c|}
        \hline
         \ \ \  &  &  \\
        \hline
         & \ \ \  &  \\
        \hline
         &  & \ \ \ \\
        \hline
    \end{tabular}
    
    \end{tabular}
\]
For the bottom left and upper right blocks, $X$ must contain the top left point. Moreover, we must either have that the middle and bottom right points are in $X$ or the middle right and bottom middle points are in $X$. Observe that the first case is impossible as this would eliminate too many possible points from the middle right or bottom middle block. Thus, for the bottom left and upper right blocks, $X$ must contain the top left, middle right, and bottom middle points. Note that this eliminates the top left point of the middle right and bottom middle blocks.

For the middle right and bottom middle blocks, observe that $X$ cannot contain the bottom right point as this would eliminate both the top right and the bottom left points. Thus, for the the middle right and bottom middle blocks, $X$ must contain the top right, middle, and bottom left points.

Thus, the points for all of the blocks except the bottom right block must be as follows.
    \[
    \begin{tabular}{c}
    
    \begin{tabular}{|c|c|c|}
        \hline
        \ \ \  & $\bullet$ & $\bullet$ \\
        \hline
        $\bullet$ & \ \ \  & \\
        \hline
        $\bullet$ &  & \ \ \ \\
        \hline
    \end{tabular} \ 
    \begin{tabular}{|c|c|c|}
        \hline
        $\bullet$ & \ \ \  &  \\
        \hline
         & $\bullet$ & \ \ \ \\
        \hline
        \ \ \  &  & $\bullet$ \\
        \hline
    \end{tabular} \ 
    \begin{tabular}{|c|c|c|}
        \hline
        $\bullet$ & \ \ \  &  \\
        \hline
        \ \ \  &  & $\bullet$ \\
        \hline
         & $\bullet$ & \ \ \ \\
        \hline
    \end{tabular} \\ 
    \\
    \begin{tabular}{|c|c|c|}
        \hline
        $\bullet$ & \ \ \  &  \\
        \hline
         & $\bullet$ & \ \ \ \\
        \hline
        \ \ \  &  & $\bullet$ \\
        \hline
    \end{tabular} \ 
    \begin{tabular}{|c|c|c|}
        \hline
        \ \ \  & $\bullet$ &  \\
        \hline
        $\bullet$ & \ \ \  & $\bullet$\\
        \hline
         & $\bullet$ & \ \ \ \\
        \hline
    \end{tabular} \ 
    \begin{tabular}{|c|c|c|}
        \hline
        \ \ \  &  & $\bullet$ \\
        \hline
         & $\bullet$ & \ \ \ \\
        \hline
        $\bullet$ & \ \ \  &  \\
        \hline
    \end{tabular} \\
    \\
    \begin{tabular}{|c|c|c|}
        \hline
        $\bullet$ & \ \ \  &  \\
        \hline
        \ \ \  &  & $\bullet$ \\
        \hline
         & $\bullet$ & \ \ \ \\
        \hline
    \end{tabular} \ 
    \begin{tabular}{|c|c|c|}
        \hline
        \ \ \  &  & $\bullet$ \\
        \hline
         & $\bullet$ & \ \ \ \\
        \hline
        $\bullet$ & \ \ \  &  \\
        \hline
    \end{tabular} \ 
    \begin{tabular}{|c|c|c|}
        \hline
        \cellcolor{lightgray} & \ \ \  & \cellcolor{lightgray} \\
        \hline
        \ \ \  & \cellcolor{lightgray} & \cellcolor{lightgray} \\
        \hline
        \cellcolor{lightgray} & \cellcolor{lightgray} & \ \ \ \\
        \hline
    \end{tabular}
    
    \end{tabular}
\]
For the bottom right block, the points shown in gray cannot be in $X$. There are exactly three points remaining so $X$ must contain these points. This gives the set $X_4$ shown at the beginning of Section \ref{section:lower}.
\end{proof}

The following fact can be checked directly.
\begin{Cor}
If $X \subseteq \mathbb{Z}_3^{4}$, $X$ has maximum induced degree $1$ and $|X| = 3^3 + 2$ then for each $i \in [4]$ and all $x \in \mathbb{Z}_3^{4}$, the line $l = \{x,x+e_i,x+2e_i\}$ has at least one point of $X$.
\end{Cor}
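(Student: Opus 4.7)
The plan is to reduce to the single explicit set $X_4$ and then verify the saturation directly. By the preceding Theorem, any $X \subseteq \Z_3^4$ with $|X|=3^3+2$ and induced degree $1$ is isomorphic to the explicit set $X_4$ displayed at the beginning of \secref{section:lower}. Since the property ``for every $i \in [4]$, every line in direction $e_i$ meets $X$'' is invariant under the automorphism group of $H(4,3)$ (these automorphisms merely permute the coordinates, rescale within a coordinate, and translate, all of which send lines in direction $e_i$ to lines in some direction $e_{\pi(i)}$), it suffices to verify the statement for $X_4$ itself.

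The $i=1$ case is immediate from the $U_f$-representation of $X_4$ given at the end of \secref{section:lower}: every one of the $27$ values of $U_f$ lies in $\{A,B,C,X,Y,Z\}$ (none is $\phi$), so every line in direction $e_1$ contains at least one point of $X_4$. For $i \in \{2,3,4\}$ one performs the analogous inspection of the other coordinate directions using the picture of $X_4$ at the start of \secref{section:lower}: each of the $27$ lines in direction $e_i$ is a triple of cells sharing the same three of the four coordinates, and one checks in each case that at least one of the three cells is marked with $\bullet$.

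I expect no real obstacle here; the whole statement is a verification about one specific $29$-point subset of $\Z_3^4$, and the nontrivial work was already done in proving uniqueness up to isomorphism. The only thing to be careful about is the bookkeeping: for each of the four directions there are $27$ lines to inspect, so one should proceed systematically (for example, fixing $i$, iterating over the $27$ choices of the remaining coordinates, and confirming that at least one of the three points on the resulting line is a $\bullet$), rather than hoping a cleaner counting argument will work, since $|X_4|=29$ is only marginally greater than the $27$ lines per direction and so pigeonhole alone does not force saturation.
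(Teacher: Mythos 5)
Your proposal is correct and matches the paper's approach: the paper simply remarks that this "can be checked directly," which — following the uniqueness theorem it is a corollary of — means exactly your reduction to the single set $X_4$ (noting that saturation in every direction is automorphism-invariant) followed by direct inspection. Your observation that the $U_f$-representation immediately gives the $i=1$ case is a nice shortcut for one of the four directions.
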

\section{Example where two canonical sets have an intersection which is not a canonical set}\label{sec:strangeintersection}

In this section, we show that when we consider general subsets of $\Z_3^n$ rather than \1sat subsets, it is possible for two canonical sets to have an intersection which is not a canonical set. We show this by giving a $U \subseteq \Z_3^{6}$ where there are two canonical subsets of $U$ which are isomorphic to $D_4$ and intersect in a $2$-dimensional affine subset where they only have two points.

The first canonical set is $S = \{x \in U: x_4 = x_5 = 0\}$ (i.e., the points in the first three blocks in the top row of blocks) and the second canonical set is $S' = \{x \in U: x_1 = 1, x_3 = 2\}$ (i.e., all of the $B$ shown in gray cells). Note that $S \cap S'$ consists of the two $B$ in the bottom row of the first set of blocks which are marked with a star.

\[
    \begin{tabular}{c c}
    \begin{tabular}{c}
    \begin{tabular}{|c|c|c|}
        \hline
        $X$ & $A$ & $A$\\
        \hline
        $A$ & $B$ & $B$\\
        \hline
        $A$ & $C$ & $C$\\
        \hline
    \end{tabular} \ 
    \begin{tabular}{|c|c|c|}
        \hline
        $A$ & $C$ & $B$\\
        \hline
        $B$ & $A$ & $C$\\
        \hline
        $C$ & \cellcolor{lightgray}$B^*$ & $A$\\
        \hline
    \end{tabular} \ 
    \begin{tabular}{|c|c|c|}
        \hline
        $A$ & $C$ & $B$\\
        \hline
        $B$ & $A$ & $C$\\
        \hline
        $C$ & \cellcolor{lightgray}$B^*$ & $A$\\
        \hline
    \end{tabular}\\
    \\
    \begin{tabular}{|c|c|c|}
        \hline
        $A$ & $C$ & $C$\\
        \hline
        $C$ & $A$ & $A$\\
        \hline
        $ $ & \cellcolor{lightgray}$B$ & \cellcolor{lightgray}$B$\\
        \hline
    \end{tabular} \ 
    \begin{tabular}{|c|c|c|}
        \hline
        $C$ & $B$ & $A$\\
        \hline
        $A$ & $C$ & $B$\\
        \hline
        \cellcolor{lightgray}$B$ & $A$ & $C$\\
        \hline
    \end{tabular} \ 
    \begin{tabular}{|c|c|c|}
        \hline
        $C$ & $B$ & $A$\\
        \hline
        $A$ & $C$ & $B$\\
        \hline
        \cellcolor{lightgray}$B$ & $A$ & $C$\\
        \hline
    \end{tabular}\\
    \\
    \begin{tabular}{|c|c|c|}
        \hline
        $A$ & $B$ & $B$\\
        \hline
        $ $ & $C$ & $C$\\
        \hline
        \cellcolor{lightgray}$B$ & $A$ & $A$\\
        \hline
    \end{tabular} \ 
    \begin{tabular}{|c|c|c|}
        \hline
        $B$ & $A$ & $C$\\
        \hline
        $C$ & $B$ & $A$\\
        \hline
        $A$ & $C$ & \cellcolor{lightgray}$B$\\
        \hline
    \end{tabular} \ 
    \begin{tabular}{|c|c|c|}
        \hline
        $B$ & $A$ & $C$\\
        \hline
        $C$ & $B$ & $A$\\
        \hline
        $A$ & $C$ & \cellcolor{lightgray}$B$\\
        \hline
    \end{tabular}
    \end{tabular} 
    
    \begin{tabular}{c}\
    \begin{tabular}{|c|c|c|}
        \hline
        $A$ & $C$ & $C$\\
        \hline
        $C$ & $A$ & $A$\\
        \hline
        $ $ & \cellcolor{lightgray}$B$ & \cellcolor{lightgray}$B$\\
        \hline
    \end{tabular} \ 
    \begin{tabular}{|c|c|c|}
        \hline
        $C$ & $B$ & $A$\\
        \hline
        $A$ & $C$ & $B$\\
        \hline
        \cellcolor{lightgray}$B$ & $A$ & $C$\\
        \hline
    \end{tabular} \ 
    \begin{tabular}{|c|c|c|}
        \hline
        $C$ & $B$ & $A$\\
        \hline
        $A$ & $C$ & $B$\\
        \hline
        \cellcolor{lightgray}$B$ & $A$ & $C$\\
        \hline
    \end{tabular}\\
    \\
    \begin{tabular}{|c|c|c|}
        \hline
        $C$ & $B$ & $B$\\
        \hline
        $Z$ & $C$ & $C$\\
        \hline
        $C$ & $A$ & $A$\\
        \hline
    \end{tabular} \ 
    \begin{tabular}{|c|c|c|}
        \hline
        $B$ & $A$ & $C$\\
        \hline
        $C$ & $B$ & $A$\\
        \hline
        $A$ & $C$ & \cellcolor{lightgray}$B$\\
        \hline
    \end{tabular} \ 
    \begin{tabular}{|c|c|c|}
        \hline
        $B$ & $A$ & $C$\\
        \hline
        $C$ & $B$ & $A$\\
        \hline
        $A$ & $C$ & \cellcolor{lightgray}$B$\\
        \hline
    \end{tabular}\\
    \\
    \begin{tabular}{|c|c|c|}
        \hline
        $ $ & $A$ & $A$\\
        \hline
        $C$ & $B$ & $B$\\
        \hline
        \cellcolor{lightgray}$B$ & $C$ & $C$\\
        \hline
    \end{tabular} \ 
    \begin{tabular}{|c|c|c|}
        \hline
        $A$ & $C$ & $B$\\
        \hline
        $B$ & $A$ & $C$\\
        \hline
        $C$ & \cellcolor{lightgray}$B$ & $A$\\
        \hline
    \end{tabular} \ 
    \begin{tabular}{|c|c|c|}
        \hline
        $A$ & $C$ & $B$\\
        \hline
        $B$ & $A$ & $C$\\
        \hline
        $C$ & \cellcolor{lightgray}$B$ & $A$\\
        \hline
    \end{tabular}
    \end{tabular}
    \end{tabular}
\]
\[
    \begin{tabular}{c}
    \begin{tabular}{|c|c|c|}
        \hline
        $A$ & $B$ & $B$\\
        \hline
        $ $ & $C$ & $C$\\
        \hline
        \cellcolor{lightgray}$B$ & $A$ & $A$\\
        \hline
    \end{tabular} \ 
    \begin{tabular}{|c|c|c|}
        \hline
        $B$ & $A$ & $C$\\
        \hline
        $C$ & $B$ & $A$\\
        \hline
        $A$ & $C$ & \cellcolor{lightgray}$B$\\
        \hline
    \end{tabular} \ 
    \begin{tabular}{|c|c|c|}
        \hline
        $B$ & $A$ & $C$\\
        \hline
        $C$ & $B$ & $A$\\
        \hline
        $A$ & $C$ & \cellcolor{lightgray}$B$\\
        \hline
    \end{tabular}\\
    \\
    \begin{tabular}{|c|c|c|}
        \hline
        $ $ & $A$ & $A$\\
        \hline
        $C$ & $B$ & $B$\\
        \hline
        \cellcolor{lightgray}$B$ & $C$ & $C$\\
        \hline
    \end{tabular} \ 
    \begin{tabular}{|c|c|c|}
        \hline
        $A$ & $C$ & $B$\\
        \hline
        $B$ & $A$ & $C$\\
        \hline
        $C$ & \cellcolor{lightgray}$B$ & $A$\\
        \hline
    \end{tabular} \ 
    \begin{tabular}{|c|c|c|}
        \hline
        $A$ & $C$ & $B$\\
        \hline
        $B$ & $A$ & $C$\\
        \hline
        $C$ & \cellcolor{lightgray}$B$ & $A$\\
        \hline
    \end{tabular}\\
    \\
\begin{tabular}{|c|c|c|}
        \hline
        $B$ & $C$ & $C$\\
        \hline
        $B$ & $A$ & $A$\\
        \hline
        $Y$ & \cellcolor{lightgray}$B$ & \cellcolor{lightgray}$B$\\
        \hline
    \end{tabular} \ 
    \begin{tabular}{|c|c|c|}
        \hline
        $C$ & $B$ & $A$\\
        \hline
        $A$ & $C$ & $B$\\
        \hline
        \cellcolor{lightgray}$B$ & $A$ & $C$\\
        \hline
    \end{tabular} \ 
    \begin{tabular}{|c|c|c|}
        \hline
        $C$ & $B$ & $A$\\
        \hline
        $A$ & $C$ & $B$\\
        \hline
        \cellcolor{lightgray}$B$ & $A$ & $C$\\
        \hline
    \end{tabular}
    \end{tabular}
\]

\section{1-skew functions for $n = 3$}\label{sec:nequals3badfunctions}
We list all 1-skew functions for $n = 3$ which extend \eqref{D3} and have the first row of the second block equal to $(A, B, C)$.\\

\noindent
\textbf{Isomorphic to \eqref{inde}:}
    \begin{enumerate}[(i)]

    \item \[
    \begin{tabular}{|c|c|c|}
        \hline
        $X$ & $A$ & $A$\\
        \hline
        $A$ & $B$ & $C$\\
        \hline
        $A$ & $C$ & $B$\\
        \hline
    \end{tabular} \ 
    \begin{tabular}{|c|c|c|}
        \hline
        $A$ & $B$ & $C$\\
        \hline
        $B$ & $C$ & $A$\\
        \hline
        $C$ & $A$ & $B$\\
        \hline
    \end{tabular} \ 
    \begin{tabular}{|c|c|c|}
        \hline
        $A$ & $C$ & $B$\\
        \hline
        $B$ & $A$ & $C$\\
        \hline
        $C$ & $B$ & $A$\\
        \hline
    \end{tabular}.
    \]

    \item \[
    \begin{tabular}{|c|c|c|}
        \hline
        $X$ & $A$ & $A$\\
        \hline
        $A$ & $B$ & $C$\\
        \hline
        $A$ & $C$ & $B$\\
        \hline
    \end{tabular} \ 
    \begin{tabular}{|c|c|c|}
        \hline
        $A$ & $B$ & $C$\\
        \hline
        $B$ & $C$ & $A$\\
        \hline
        $B$ & $A$ & $C$\\
        \hline
    \end{tabular} \ 
    \begin{tabular}{|c|c|c|}
        \hline
        $A$ & $C$ & $B$\\
        \hline
        $C$ & $A$ & $B$\\
        \hline
        $C$ & $B$ & $A$\\
        \hline
    \end{tabular}.
    \]

    \item \[
    \begin{tabular}{|c|c|c|}
        \hline
        $X$ & $A$ & $A$\\
        \hline
        $A$ & $B$ & $C$\\
        \hline
        $A$ & $C$ & $B$\\
        \hline
    \end{tabular} \ 
    \begin{tabular}{|c|c|c|}
        \hline
        $A$ & $B$ & $C$\\
        \hline
        $C$ & $A$ & $B$\\
        \hline
        $B$ & $C$ & $A$\\
        \hline
    \end{tabular} \ 
    \begin{tabular}{|c|c|c|}
        \hline
        $A$ & $C$ & $B$\\
        \hline
        $C$ & $B$ & $A$\\
        \hline
        $B$ & $A$ & $C$\\
        \hline
    \end{tabular}.
    \]

    \item[(iv)] \[
    \begin{tabular}{|c|c|c|}
        \hline
        $X$ & $A$ & $A$\\
        \hline
        $A$ & $B$ & $C$\\
        \hline
        $A$ & $C$ & $B$\\
        \hline
    \end{tabular} \ 
    \begin{tabular}{|c|c|c|}
        \hline
        $A$ & $B$ & $C$\\
        \hline
        $C$ & $A$ & $B$\\
        \hline
        $C$ & $B$ & $A$\\
        \hline
    \end{tabular} \ 
    \begin{tabular}{|c|c|c|}
        \hline
        $A$ & $C$ & $B$\\
        \hline
        $B$ & $C$ & $A$\\
        \hline
        $B$ & $A$ & $C$\\
        \hline
    \end{tabular}.
    \]
    \end{enumerate}

\noindent
\textbf{Isomorphic to \eqref{As}:}
    \begin{enumerate}    
    \item[(v)] \[
    \begin{tabular}{|c|c|c|}
        \hline
        $X$ & $A$ & $A$\\
        \hline
        $A$ & $B$ & $C$\\
        \hline
        $A$ & $C$ & $B$\\
        \hline
    \end{tabular} \ 
    \begin{tabular}{|c|c|c|}
        \hline
        $A$ & $B$ & $C$\\
        \hline
        $B$ & $C$ & $A$\\
        \hline
        $C$ & $A$ & $B$\\
        \hline
    \end{tabular} \ 
    \begin{tabular}{|c|c|c|}
        \hline
        $A$ & $C$ & $B$\\
        \hline
        $C$ & $B$ & $A$\\
        \hline
        $B$ & $A$ & $C$\\
        \hline
    \end{tabular}.
    \]

    \item[(vi)] \[
    \begin{tabular}{|c|c|c|}
        \hline
        $X$ & $A$ & $A$\\
        \hline
        $A$ & $B$ & $C$\\
        \hline
        $A$ & $C$ & $B$\\
        \hline
    \end{tabular} \ 
    \begin{tabular}{|c|c|c|}
        \hline
        $A$ & $B$ & $C$\\
        \hline
        $B$ & $C$ & $B$\\
        \hline
        $C$ & $A$ & $A$\\
        \hline
    \end{tabular} \ 
    \begin{tabular}{|c|c|c|}
        \hline
        $A$ & $C$ & $B$\\
        \hline
        $C$ & $A$ & $A$\\
        \hline
        $B$ & $B$ & $C$\\
        \hline
    \end{tabular}.
    \]

    \item[(vii)] \[
    \begin{tabular}{|c|c|c|}
        \hline
        $X$ & $A$ & $A$\\
        \hline
        $A$ & $B$ & $C$\\
        \hline
        $A$ & $C$ & $B$\\
        \hline
    \end{tabular} \ 
    \begin{tabular}{|c|c|c|}
        \hline
        $A$ & $B$ & $C$\\
        \hline
        $B$ & $C$ & $A$\\
        \hline
        $C$ & $B$ & $A$\\
        \hline
    \end{tabular} \ 
    \begin{tabular}{|c|c|c|}
        \hline
        $A$ & $C$ & $B$\\
        \hline
        $C$ & $A$ & $B$\\
        \hline
        $B$ & $A$ & $C$\\
        \hline
    \end{tabular}.
    \]

    \item[(viii)] \[
    \begin{tabular}{|c|c|c|}
        \hline
        $X$ & $A$ & $A$\\
        \hline
        $A$ & $B$ & $C$\\
        \hline
        $A$ & $C$ & $B$\\
        \hline
    \end{tabular} \ 
    \begin{tabular}{|c|c|c|}
        \hline
        $A$ & $B$ & $C$\\
        \hline
        $C$ & $A$ & $B$\\
        \hline
        $B$ & $C$ & $A$\\
        \hline
    \end{tabular} \ 
    \begin{tabular}{|c|c|c|}
        \hline
        $A$ & $C$ & $B$\\
        \hline
        $B$ & $A$ & $C$\\
        \hline
        $C$ & $B$ & $A$\\
        \hline
    \end{tabular}.
    \]

    \item[(ix)] \[
    \begin{tabular}{|c|c|c|}
        \hline
        $X$ & $A$ & $A$\\
        \hline
        $A$ & $B$ & $C$\\
        \hline
        $A$ & $C$ & $B$\\
        \hline
    \end{tabular} \ 
    \begin{tabular}{|c|c|c|}
        \hline
        $A$ & $B$ & $C$\\
        \hline
        $C$ & $C$ & $B$\\
        \hline
        $B$ & $A$ & $A$\\
        \hline
    \end{tabular} \ 
    \begin{tabular}{|c|c|c|}
        \hline
        $A$ & $C$ & $B$\\
        \hline
        $B$ & $A$ & $A$\\
        \hline
        $C$ & $B$ & $C$\\
        \hline
    \end{tabular}.
    \]

    \item[(x)] \[
    \begin{tabular}{|c|c|c|}
        \hline
        $X$ & $A$ & $A$\\
        \hline
        $A$ & $B$ & $C$\\
        \hline
        $A$ & $C$ & $B$\\
        \hline
    \end{tabular} \ 
    \begin{tabular}{|c|c|c|}
        \hline
        $A$ & $B$ & $C$\\
        \hline
        $C$ & $A$ & $B$\\
        \hline
        $B$ & $A$ & $C$\\
        \hline
    \end{tabular} \ 
    \begin{tabular}{|c|c|c|}
        \hline
        $A$ & $C$ & $B$\\
        \hline
        $B$ & $C$ & $A$\\
        \hline
        $C$ & $B$ & $A$\\
        \hline
    \end{tabular}.
    \]
    \end{enumerate}

\noindent
\textbf{Isomorphic to \eqref{extra_point}:}    \begin{enumerate}
    \item[(xi)] \[
    \begin{tabular}{|c|c|c|}
        \hline
        $X$ & $A$ & $A$\\
        \hline
        $A$ & $B$ & $C$\\
        \hline
        $A$ & $C$ & $B$\\
        \hline
    \end{tabular} \ 
    \begin{tabular}{|c|c|c|}
        \hline
        $A$ & $B$ & $C$\\
        \hline
        $B$ & $C$ & $A$\\
        \hline
        $C$ & $A$ & $B$\\
        \hline
    \end{tabular} \ 
    \begin{tabular}{|c|c|c|}
        \hline
        $A$ & $C$ & $B$\\
        \hline
        $C$ & $A$ & $B$\\
        \hline
        $B$ & $B$ & $E$\\
        \hline
    \end{tabular}
    \]
    where $E \in \{A, C, Y\}$.

    \item[(xii)] \[
    \begin{tabular}{|c|c|c|}
        \hline
        $X$ & $A$ & $A$\\
        \hline
        $A$ & $B$ & $C$\\
        \hline
        $A$ & $C$ & $B$\\
        \hline
    \end{tabular} \ 
    \begin{tabular}{|c|c|c|}
        \hline
        $A$ & $B$ & $C$\\
        \hline
        $B$ & $E$ & $B$\\
        \hline
        $C$ & $B$ & $A$\\
        \hline
    \end{tabular} \ 
    \begin{tabular}{|c|c|c|}
        \hline
        $A$ & $C$ & $B$\\
        \hline
        $C$ & $B$ & $A$\\
        \hline
        $B$ & $A$ & $C$\\
        \hline
    \end{tabular}
    \]
    where $E \in \{A, C, Y\}$.

    \item[(xiii)] \[
    \begin{tabular}{|c|c|c|}
        \hline
        $X$ & $A$ & $A$\\
        \hline
        $A$ & $B$ & $C$\\
        \hline
        $A$ & $C$ & $B$\\
        \hline
    \end{tabular} \ 
    \begin{tabular}{|c|c|c|}
        \hline
        $A$ & $B$ & $C$\\
        \hline
        $C$ & $A$ & $B$\\
        \hline
        $B$ & $C$ & $A$\\
        \hline
    \end{tabular} \ 
    \begin{tabular}{|c|c|c|}
        \hline
        $A$ & $C$ & $B$\\
        \hline
        $B$ & $C$ & $A$\\
        \hline
        $C$ & $F$ & $C$\\
        \hline
    \end{tabular}
    \]
    where $F \in \{A, B, Z\}$.

    \item[(xiv)] \[
    \begin{tabular}{|c|c|c|}
        \hline
        $X$ & $A$ & $A$\\
        \hline
        $A$ & $B$ & $C$\\
        \hline
        $A$ & $C$ & $B$\\
        \hline
    \end{tabular} \ 
    \begin{tabular}{|c|c|c|}
        \hline
        $A$ & $B$ & $C$\\
        \hline
        $C$ & $C$ & $F$\\
        \hline
        $B$ & $A$ & $C$\\
        \hline
    \end{tabular} \ 
    \begin{tabular}{|c|c|c|}
        \hline
        $A$ & $C$ & $B$\\
        \hline
        $B$ & $A$ & $C$\\
        \hline
        $C$ & $B$ & $A$\\
        \hline
    \end{tabular}
    \]
    where $F \in \{A, B, Z\}$.
    \end{enumerate}

\section{Proof of Lemma \ref{forbidden}}\label{sec:forbiddenlemmaproof}
Recall Lemma \ref{forbidden}:
\begin{Lemma}
    Let $U_f : \Z_3^4 \to \{A, B, C, X, Y, Z\}$. If $U_f(x_1, x_2, x_3, 0)$ is identical to \eqref{inde} or \eqref{As}, and $U_f(x_1, x_2, 0, x_4)$ is isomorphic to \eqref{inde} or \eqref{As}. Then $U_f$ has induced degree at least 2.
\end{Lemma}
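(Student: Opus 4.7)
The plan is to prove the lemma by a short case analysis on the isomorphism types of the two $3$-dimensional slices $S_1 := U_f|_{\{x_4 = 0\}}$ and $S_2 := U_f|_{\{x_3 = 0\}}$. The first step is to note that these two slices agree on their common $2$-dimensional face $F := U_f|_{\{x_3 = x_4 = 0\}}$. Since $S_1$ is identical to \eqref{inde} or \eqref{As}, the face $F$ is forced to equal the first block of these functions, which is the unique $n = 2$ $1$-skew function \eqref{D3}.

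Next, since $S_2$ is only assumed to be \emph{isomorphic} to \eqref{inde} or \eqref{As}, and its restriction to $\{x_4 = 0\}$ must equal the fixed $F$, the allowed forms for $S_2$ are those obtained by acting on the standard form by an automorphism in the stabilizer of $F$. Combining this with the stabilizer descriptions in \thmref{bad3}, and using the symmetry that swaps the roles of $x_3$ and $x_4$ (and hence of $S_1$ and $S_2$), the verification reduces to a small finite number of non-isomorphic pairs $(S_1, S_2)$.

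For each such pair, the values of $U_f$ on $\{x_3 = 0\} \cup \{x_4 = 0\}$ are completely determined. I then examine each point $p = (p_1, p_2, p_3, p_4)$ with $p_3, p_4 \in \{1, 2\}$. By \propref{constraints}, the value $U_f(p)$ is restricted by the known values at the neighbor of $p$ along direction $e_4$ (which lies in $S_1$) and along direction $e_3$ (which lies in $S_2$). The key observation is that in both \eqref{inde} and \eqref{As}, every $3 \times 3$ block is a Latin-square independent set on $\{A, B, C\}$, so each such pair of known values forces $U_f(p)$ into a very restricted set. Propagating these forced values around the $2$-dimensional region $\{(x_3, x_4) \in \{1, 2\}^2\}$ yields, in every case, either two adjacent equal values along a line in direction $e_1$ or $e_2$, or a value of type $X$, $Y$, or $Z$ whose forced neighbors are incompatible, giving a vertex of degree at least $2$.

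The main obstacle is orchestrating the case analysis cleanly. The symmetries of $F$ reduce the number of essentially distinct pairs $(S_1, S_2)$ significantly, and the rigidity of \eqref{inde} and \eqref{As} (in which the values of every block are determined by the first row via the Latin-square cyclic rule) ensures that each case resolves via a short local propagation of the induced degree $1$ constraints from \propref{constraints}.
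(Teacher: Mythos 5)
Your overall strategy coincides with the paper's: fix $S_1$, observe that the common face is the $n=2$ function \eqref{D3}, use \thmref{bad3} (equivalently the list in Appendix \ref{sec:nequals3badfunctions}) together with the stabilizer symmetries to cut the possibilities for $S_2$ down to a short list of pairs, and then derive a degree-$2$ vertex in each remaining configuration by local constraint propagation. So there is no methodological divergence between your plan and the paper's proof.

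The gap is that the decisive step is asserted rather than performed. After the symmetry reductions one is left with roughly a dozen concrete configurations (the paper works through cases (i)--(vii) when $S_1$ is \eqref{inde} and cases (v), (vi), (viii), (ix) when $S_1$ is \eqref{As}), and for each of these one must actually exhibit the contradiction; this verification is the entire content of the lemma and occupies the paper's Appendix \ref{sec:forbiddenlemmaproof}. Your sketch also somewhat oversimplifies the propagation: a point $p$ with $p_3,p_4\in\{1,2\}$ has exactly one known neighbour in each of $S_1$ and $S_2$, and two known neighbours only \emph{restrict} $U_f(p)$ rather than determine it, so the fill-in is not deterministic and the order of deductions matters (the paper annotates each case with an explicit deduction order). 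Moreover, not every case terminates in ``two adjacent equal values'' or an inconsistent $X/Y/Z$: in one of the paper's cases the contradiction is that a particular cell admits no value from $\{A,\dots,Z\}$ at all. Finally, a small inaccuracy: the first block of \eqref{inde} and \eqref{As} is \eqref{D3}, which is not a Latin square on $\{A,B,C\}$ (it contains the $X$ and repeated $A$'s); this happens to be harmless because the neighbours relevant to the undetermined region lie in the second and third blocks, but the ``key observation'' as stated is false for the first block. To complete the argument you must either write out the case checks explicitly or replace them with a genuinely different mechanism.
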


    \begin{proof}
    We analyze the two major cases separately.
    
    \begin{itemize}
    \item \textbf{$U_f(x_1, x_2, x_3, 0)$ is identical to \eqref{inde}.}
    
    We show that when $U_f(x_1, x_2, 0, x_4)$ is isomorphic to \eqref{inde} or \eqref{As}, $U_f$ must have induced degree at least 2. We consider the following partial function, where the entries are to be determined.
\[
    \begin{tabular}{c}
    
    \begin{tabular}{|c|c|c|}
        \hline
        $X$ & $A$ & $A$\\
        \hline
        $A$ & $B$ & $C$\\
        \hline
        $A$ & $C$ & $B$\\
        \hline
    \end{tabular} \ 
    \begin{tabular}{|c|c|c|}
        \hline
        $A$ & $B$ & $C$\\
        \hline
        $B$ & $C$ & $A$\\
        \hline
        $C$ & $A$ & $B$\\
        \hline
    \end{tabular} \ 
    \begin{tabular}{|c|c|c|}
        \hline
        $A$ & $C$ & $B$\\
        \hline
        $B$ & $A$ & $C$\\
        \hline
        $C$ & $B$ & $A$\\
        \hline
    \end{tabular}\\
    \\
    \begin{tabular}{|c|c|c|}
        \hline
        $A$ & $B$ & $C$\\
        \hline
         &  & \\
        \hline
         &  & \\
        \hline
    \end{tabular} \ 
    \begin{tabular}{|c|c|c|}
        \hline
        ${\transparent{0} E}$ & ${\transparent{0} E}$ & ${\transparent{0} E}$\\
        \hline
         &  & \\
        \hline
         &  & \\
        \hline
    \end{tabular} \ 
    \begin{tabular}{|c|c|c|}
        \hline
        ${\transparent{0} E}$ & ${\transparent{0} E}$ & ${\transparent{0} E}$\\
        \hline
         &  & \\
        \hline
         &  & \\
        \hline
    \end{tabular}\\
    \\
    \begin{tabular}{|c|c|c|}
        \hline
        $A$ & $C$ & $B$\\
        \hline
         &  & \\
        \hline
         &  & \\
        \hline
    \end{tabular} \ 
    \begin{tabular}{|c|c|c|}
        \hline
        ${\transparent{0} E}$ & ${\transparent{0} E}$ & ${\transparent{0} E}$\\
        \hline
         &  & \\
        \hline
         &  & \\
        \hline
    \end{tabular} \ 
    \begin{tabular}{|c|c|c|}
        \hline
        ${\transparent{0} E}$ & ${\transparent{0} E}$ & ${\transparent{0} E}$\\
        \hline
         &  & \\
        \hline
         &  & \\
        \hline
    \end{tabular}

    \end{tabular}
\]

Now the first column must be identical to one of the cases among (i) to (x) in Appendix \ref{sec:nequals3badfunctions}. Each of case (i)-(iv) enjoys the same symmetry as \eqref{inde}. Thus we need to consider them one by one.

\begin{itemize}
    \item Case (i). The $B$ in the gray cells immediately cause $U_f$ to have induced degree 2.
    \[
    \begin{tabular}{c}

    \begin{tabular}{|c|c|c|}
        \hline
        $X$ & $A$ & $A$\\
        \hline
        $A$ & $B$ & $C$\\
        \hline
        $A$ & $C$ & \cellcolor{lightgray}$B$\\
        \hline
    \end{tabular} \ 
    \begin{tabular}{|c|c|c|}
        \hline
        $A$ & $B$ & $C$\\
        \hline
        $B$ & $C$ & $A$\\
        \hline
        $C$ & $A$ & \cellcolor{lightgray}$B$\\
        \hline
    \end{tabular} \ 
    \begin{tabular}{|c|c|c|}
        \hline
        $A$ & $C$ & $B$\\
        \hline
        $B$ & $A$ & $C$\\
        \hline
        $C$ & $B$ & $A$\\
        \hline
    \end{tabular}\\
    \\
    \begin{tabular}{|c|c|c|}
        \hline
        $A$ & $B$ & $C$\\
        \hline
        $B$ & $C$ & $A$\\
        \hline
        $C$ & $A$ & \cellcolor{lightgray}$B$\\
        \hline
    \end{tabular} \ 
    \begin{tabular}{|c|c|c|}
        \hline
        ${\transparent{0} E}$ & ${\transparent{0} E}$ & ${\transparent{0} E}$\\
        \hline
         &  & \\
        \hline
         &  & \\
        \hline
    \end{tabular} \ 
    \begin{tabular}{|c|c|c|}
        \hline
        ${\transparent{0} E}$ & ${\transparent{0} E}$ & ${\transparent{0} E}$\\
        \hline
         &  & \\
        \hline
         &  & \\
        \hline
    \end{tabular}\\
    \\
    \begin{tabular}{|c|c|c|}
        \hline
        $A$ & $C$ & $B$\\
        \hline
        $B$ & $A$ & $C$\\
        \hline
        $C$ & $B$ & $A$\\
        \hline
    \end{tabular} \ 
    \begin{tabular}{|c|c|c|}
        \hline
        ${\transparent{0} E}$ & ${\transparent{0} E}$ & ${\transparent{0} E}$\\
        \hline
         &  & \\
        \hline
         &  & \\
        \hline
    \end{tabular} \ 
    \begin{tabular}{|c|c|c|}
        \hline
        ${\transparent{0} E}$ & ${\transparent{0} E}$ & ${\transparent{0} E}$\\
        \hline
         &  & \\
        \hline
         &  & \\
        \hline
    \end{tabular}
    
    \end{tabular}
\].

    \item Case (ii). The subscripts of the entries indicate the order in which they are deduced. The entries in the gray cells certify the induced degree is at least 2.
    \[
    \begin{tabular}{c}

    \begin{tabular}{|c|c|c|}
        \hline
        $X$ & $A$ & $A$\\
        \hline
        $A$ & $B$ & \cellcolor{lightgray}$C$\\
        \hline
        $A$ & $C$ & $B$\\
        \hline
    \end{tabular} \ 
    \begin{tabular}{|c|c|c|}
        \hline
        $A_{\transparent{0} 0}$ & $B$ & $C$\\
        \hline
        $B_{\transparent{0} 0}$ & $C$ & $A$\\
        \hline
        $C_{\transparent{0} 0}$ & $A$ & $B$\\
        \hline
    \end{tabular} \ 
    \begin{tabular}{|c|c|c|}
        \hline
        $A_{\transparent{0} 0}$ & $C$ & $B_{\transparent{0} 0}$\\
        \hline
        $B_{\transparent{0} 0}$ & $A$ & \cellcolor{lightgray}$C_{\transparent{0} 0}$\\
        \hline
        $C_{\transparent{0} 0}$ & $B$ & $A_{\transparent{0} 0}$\\
        \hline
    \end{tabular}\\
    \\
    \begin{tabular}{|c|c|c|}
        \hline
        $A$ \ & $B$ & $C$\\
        \hline
        $B$ & $C$ & $A$\\
        \hline
        $B$ & $A$ & $C$\\
        \hline
    \end{tabular} \ 
    \begin{tabular}{|c|c|c|}
        \hline
        ${\transparent{0} E_0}$ & ${\transparent{0} E}$ & ${\transparent{0} E}$\\
        \hline
         &  & \\
        \hline
         &  & \\
        \hline
    \end{tabular} \ 
    \begin{tabular}{|c|c|c|}
        \hline
        ${\transparent{0} E_0}$ & ${\transparent{0} E}$ & ${\transparent{0} E_0}$\\
        \hline
         &  & \\
        \hline
         &  & \\
        \hline
    \end{tabular}\\
    \\
    \begin{tabular}{|c|c|c|}
        \hline
        $A$ & $C$ & $B$\\
        \hline
        $C$ & $A$ & $B$\\
        \hline
        $C$ & $B$ & $A$\\
        \hline
    \end{tabular} \ 
    \begin{tabular}{|c|c|c|}
        \hline
        ${\transparent{0} E}$ & ${\transparent{0} E}$ & ${\transparent{0} E}$\\
        \hline
        $A_1$ &  & \\
        \hline
         &  & \\
        \hline
    \end{tabular} \ 
    \begin{tabular}{|c|c|c|}
        \hline
        ${\transparent{0} E}$ & ${\transparent{0} E}$ & ${\transparent{0} E}$\\
        \hline
        $A_2$ &  & \cellcolor{lightgray}$C_3$\\
        \hline
         &  & \\
        \hline
    \end{tabular}
    
    \end{tabular}
\]

    \item Case (iii). The gray cell at $(0, 2, 1, 1)$ cannot be $A$, $B$ or $C$, otherwise $U_f$ will have induced degree 2. But it cannot be $X, Y, Z$ either.
    \[
    \begin{tabular}{c}
    
    \begin{tabular}{|c|c|c|}
        \hline
        $X$ & $A$ & $A$\\
        \hline
        $A$ & $B$ & $C$\\
        \hline
        $A$ & $C$ & $B$\\
        \hline
    \end{tabular} \ 
    \begin{tabular}{|c|c|c|}
        \hline
        $A_{\transparent{0} 0}$ & $B$ & $C$\\
        \hline
        $B_{\transparent{0} 0}$ & $C$ & $A$\\
        \hline
        $C_{\transparent{0} 0}$ & $A$ & $B$\\
        \hline
    \end{tabular} \ 
    \begin{tabular}{|c|c|c|}
        \hline
        $A_{\transparent{0} 0}$ & $C$ & $B$\\
        \hline
        $B_{\transparent{0} 0}$ & $A$ & $C$\\
        \hline
        $C_{\transparent{0} 0}$ & $B$ & $A$\\
        \hline
    \end{tabular}\\
    \\
    \begin{tabular}{|c|c|c|}
        \hline
        $A$ & $B$ & $C$\\
        \hline
        $C$ & $A$ & $B$\\
        \hline
        $B$ & $C$ & $A$\\
        \hline
    \end{tabular} \ 
    \begin{tabular}{|c|c|c|}
        \hline
        ${\transparent{0} E}$ & ${\transparent{0} E}$ & ${\transparent{0} E}$\\
        \hline
        $A_1$ &  & \\
        \hline
        \cellcolor{lightgray} &  & \\
        \hline
    \end{tabular} \ 
    \begin{tabular}{|c|c|c|}
        \hline
        ${\transparent{0} E}$ & ${\transparent{0} E}$ & ${\transparent{0} E}$\\
        \hline
        $A_2$ &  & \\
        \hline
         &  & \\
        \hline
    \end{tabular}\\
    \\
    \begin{tabular}{|c|c|c|}
        \hline
        $A$ & $C$ & $B$\\
        \hline
        $C$ & $B$ & $A$\\
        \hline
        $B$ & $A$ & $C$\\
        \hline
    \end{tabular} \ 
    \begin{tabular}{|c|c|c|}
        \hline
        ${\transparent{0} E}$ & ${\transparent{0} E}$ & ${\transparent{0} E}$\\
        \hline
         &  & \\
        \hline
        $A_3$ & & \\
        \hline
    \end{tabular} \ 
    \begin{tabular}{|c|c|c|}
        \hline
        ${\transparent{0} E}$ & ${\transparent{0} E}$ & ${\transparent{0} E}$\\
        \hline
         &  & \\
        \hline
        $A_4$ &  & \\
        \hline
    \end{tabular}
    
    \end{tabular}
\]

    \item Case (iv). The entries with the same subscript are deduced at the same time given the entries with smaller subscripts. There is no assignment to the gray cell will result in a \1sat function with induced degree 1.
    \[
    \begin{tabular}{c}

    \begin{tabular}{|c|c|c|}
        \hline
        $X$ & $A$ & $A$\\
        \hline
        $A$ & $B$ & $C$\\
        \hline
        $A$ & $C$ & $B$\\
        \hline
    \end{tabular} \ 
    \begin{tabular}{|c|c|c|}
        \hline
        $A_{\transparent{0} 0}$ & $B_{\transparent{0} 0}$ & $C$\\
        \hline
        $B_{\transparent{0} 0}$ & $C_{\transparent{0} 0}$ & $A$\\
        \hline
        $C_{\transparent{0} 0}$ & $A_{\transparent{0} 0}$ & $B$\\
        \hline
    \end{tabular} \ 
    \begin{tabular}{|c|c|c|}
        \hline
        $A_{\transparent{0} 0}$ & $C$ & $B$\\
        \hline
        $B_{\transparent{0} 0}$ & $A$ & $C$\\
        \hline
        $C_{\transparent{0} 0}$ & $B$ & $A$\\
        \hline
    \end{tabular}\\
    \\
    \begin{tabular}{|c|c|c|}
        \hline
        $A$ & $B$ & $C$\\
        \hline
        $C$ & $A$ & $B$\\
        \hline
        $C$ & $B$ & $A$\\
        \hline
    \end{tabular} \ 
    \begin{tabular}{|c|c|c|}
        \hline
        $C_3$ & $A_{4}$ & ${\transparent{0} E}$\\
        \hline
        $A_1$ & $B_{6}$ & \\
        \hline
        $B_2$ & $C_{5}$ & \\
        \hline
    \end{tabular} \ 
    \begin{tabular}{|c|c|c|}
        \hline
        $C_{3}$ & ${\transparent{0} E}$ & ${\transparent{0} E}$\\
        \hline
        $A_1$ &  & \\
        \hline
        $B_2$ &  & \\
        \hline
    \end{tabular}\\
    \\
    \begin{tabular}{|c|c|c|}
        \hline
        $A$ & $C$ & $B$\\
        \hline
        $B$ & $C$ & $A$\\
        \hline
        $B$ & $A$ & $C$\\
        \hline
    \end{tabular} \ 
    \begin{tabular}{|c|c|c|}
        \hline
        $B_{3}$ & $A_{4}$ & ${\transparent{0} E}$\\
        \hline
        $C_2$ & $B_{5}$ & \cellcolor{lightgray}\\
        \hline
        $A_1$ &  & \\
        \hline
    \end{tabular} \ 
    \begin{tabular}{|c|c|c|}
        \hline
        $B_{3}$ & ${\transparent{0} E}$ & ${\transparent{0} E}$\\
        \hline
        $C_2$ &  & \\
        \hline
        $A_1$ &  & \\
        \hline
    \end{tabular}
    
    \end{tabular}
\]
\end{itemize}

For case (v) to (x), the stabilizing action for \eqref{inde} partitions them into three orbits $\{\text{(v)}, \text{(viii)}\}$, $\{\text{(vi)}, \text{(ix)}\}$ and $\{\text{(vii)}, \text{(x)}\}$. So it suffices to consider case (v), (vi) and (vii).

\begin{itemize}
    \item Case (v). The three $B$ in the gray cells immediately certify that the function has induced degree at least 2.
    \[
    \begin{tabular}{c}
    
    \begin{tabular}{|c|c|c|}
        \hline
        $X$ & $A$ & $A$\\
        \hline
        $A$ & $B$ & $C$\\
        \hline
        $A$ & $C$ & \cellcolor{lightgray}$B$\\
        \hline
    \end{tabular} \ 
    \begin{tabular}{|c|c|c|}
        \hline
        $A$ & $B$ & $C$\\
        \hline
        $B$ & $C$ & $A$\\
        \hline
        $C$ & $A$ & \cellcolor{lightgray}$B$\\
        \hline
    \end{tabular} \ 
    \begin{tabular}{|c|c|c|}
        \hline
        $A$ & $C$ & $B$\\
        \hline
        $B$ & $A$ & $C$\\
        \hline
        $C$ & $B$ & $A$\\
        \hline
    \end{tabular}\\
    \\
    \begin{tabular}{|c|c|c|}
        \hline
        $A$ & $B$ & $C$\\
        \hline
        $B$ & $C$ & $A$\\
        \hline
        $C$ & $A$ & \cellcolor{lightgray}$B$\\
        \hline
    \end{tabular} \ 
    \begin{tabular}{|c|c|c|}
        \hline
        ${\transparent{0} E}$ & ${\transparent{0} E}$ & ${\transparent{0} E}$\\
        \hline
         &  & \\
        \hline
         &  & \\
        \hline
    \end{tabular} \ 
    \begin{tabular}{|c|c|c|}
        \hline
        ${\transparent{0} E}$ & ${\transparent{0} E}$ & ${\transparent{0} E}$\\
        \hline
         &  & \\
        \hline
         &  & \\
        \hline
    \end{tabular}\\
    \\
    \begin{tabular}{|c|c|c|}
        \hline
        $A$ & $C$ & $B$\\
        \hline
        $C$ & $B$ & $A$\\
        \hline
        $B$ & $A$ & $C$\\
        \hline
    \end{tabular} \ 
    \begin{tabular}{|c|c|c|}
        \hline
        ${\transparent{0} E}$ & ${\transparent{0} E}$ & ${\transparent{0} E}$\\
        \hline
         &  & \\
        \hline
         &  & \\
        \hline
    \end{tabular} \ 
    \begin{tabular}{|c|c|c|}
        \hline
        ${\transparent{0} E}$ & ${\transparent{0} E}$ & ${\transparent{0} E}$\\
        \hline
         &  & \\
        \hline
         &  & \\
        \hline
    \end{tabular}
    
    \end{tabular}
\]

    \item Case (vi).
    \[
    \begin{tabular}{c}
    
    \begin{tabular}{|c|c|c|}
        \hline
        $X$ & $A$ & $A$\\
        \hline
        $A$ & $B$ & $C$\\
        \hline
        $A$ & $C$ & \cellcolor{lightgray}$B$\\
        \hline
    \end{tabular} \ 
    \begin{tabular}{|c|c|c|}
        \hline
        $A_{\transparent{0} 0}$ & $B_{\transparent{0} 0}$ & $C_{\transparent{0} 0}$\\
        \hline
        $B_{\transparent{0} 0}$ & $C_{\transparent{0} 0}$ & $A_{\transparent{0} 0}$\\
        \hline
        $C_{\transparent{0} 0}$ & $A_{\transparent{0} 0}$ & \cellcolor{lightgray}$B_{\transparent{0} 0}$\\
        \hline
    \end{tabular} \ 
    \begin{tabular}{|c|c|c|}
        \hline
        $A_{\transparent{0} 0}$ & $C_{\transparent{0} 0}$ & $B$\\
        \hline
        $B_{\transparent{0} 0}$ & $A_{\transparent{0} 0}$ & $C$\\
        \hline
        $C_{\transparent{0} 0}$ & $B_{\transparent{0} 0}$ & $A$\\
        \hline
    \end{tabular}\\
    \\
    \begin{tabular}{|c|c|c|}
        \hline
        $A$ & $B$ & $C$\\
        \hline
        $B$ & $C$ & $B$\\
        \hline
        $C$ & $A$ & $A$\\
        \hline
    \end{tabular} \ 
    \begin{tabular}{|c|c|c|}
        \hline
        ${\transparent{0} E_0}$ & ${\transparent{0} E_0}$ & ${\transparent{0} E_0}$\\
        \hline
         &  & \\
        \hline
         &  & \\
        \hline
    \end{tabular} \ 
    \begin{tabular}{|c|c|c|}
        \hline
        ${\transparent{0} E_0}$ & ${\transparent{0} E_0}$ & ${\transparent{0} E}$\\
        \hline
         &  & \\
        \hline
         &  & \\
        \hline
    \end{tabular}\\
    \\
    \begin{tabular}{|c|c|c|}
        \hline
        $A$ & $C$ & $B$\\
        \hline
        $C$ & $A$ & $A$\\
        \hline
        $B$ & $B$ & $C$\\
        \hline
    \end{tabular} \ 
    \begin{tabular}{|c|c|c|}
        \hline
        ${\transparent{0} E}$ & ${\transparent{0} E}$ & ${\transparent{0} E}$\\
        \hline
         &  & \\
        \hline
        $A_1$ & $C_3$ & \cellcolor{lightgray}$B_5$\\
        \hline
    \end{tabular} \ 
    \begin{tabular}{|c|c|c|}
        \hline
        ${\transparent{0} E}$ & ${\transparent{0} E}$ & ${\transparent{0} E}$\\
        \hline
         &  & \\
        \hline
        $A_2$ & $C_4$ & \\
        \hline
    \end{tabular}
    
    \end{tabular}
\]

    \item Case (vii).
    \[
    \begin{tabular}{c}
    
    \begin{tabular}{|c|c|c|}
        \hline
        $X$ & $A$ & $A$\\
        \hline
        $A$ & $B$ & $C$\\
        \hline
        $A$ & $C$ & $B$\\
        \hline
    \end{tabular} \ 
    \begin{tabular}{|c|c|c|}
        \hline
        $A$ & $B$ & $C_{\transparent{0} 0}$\\
        \hline
        $B$ & $C$ & $A_{\transparent{0} 0}$\\
        \hline
        $C$ & $A$ & $B_{\transparent{0} 0}$\\
        \hline
    \end{tabular} \ 
    \begin{tabular}{|c|c|c|}
        \hline
        $A$ & $C$ & $B_{\transparent{0} 0}$\\
        \hline
        $B$ & $A$ & $C_{\transparent{0} 0}$\\
        \hline
        $C$ & $B$ & $A_{\transparent{0} 0}$\\
        \hline
    \end{tabular}\\
    \\
    \begin{tabular}{|c|c|c|}
        \hline
        $A$ & $B$ & $C$\\
        \hline
        $B$ & $C$ & $A$\\
        \hline
        $C$ & $B$ & $A$\\
        \hline
    \end{tabular} \ 
    \begin{tabular}{|c|c|c|}
        \hline
        ${\transparent{0} E}$ & ${\transparent{0} E}$ & \cellcolor{lightgray}$B_6$\\
        \hline
         & & \cellcolor{lightgray}$B_5$\\
        \hline
         &  & $C_1$\\
        \hline
    \end{tabular} \ 
    \begin{tabular}{|c|c|c|}
        \hline
        ${\transparent{0} E}$ & ${\transparent{0} E}$ & ${\transparent{0} E}$\\
        \hline
         &  & \cellcolor{lightgray}$B_7$\\
        \hline
         &  & \\
        \hline
    \end{tabular}\\
    \\
    \begin{tabular}{|c|c|c|}
        \hline
        $A$ & $C$ & $B$\\
        \hline
        $C$ & $A$ & $B$\\
        \hline
        $B$ & $A$ & $C$\\
        \hline
    \end{tabular} \ 
    \begin{tabular}{|c|c|c|}
        \hline
        ${\transparent{0} E}$ & ${\transparent{0} E}$ & $A_4$\\
        \hline
         &  & $C_3$\\
        \hline
         &  & $A_2$\\
        \hline
    \end{tabular} \ 
    \begin{tabular}{|c|c|c|}
        \hline
        ${\transparent{0} E}$ & ${\transparent{0} E}$ & ${\transparent{0} E_0}$\\
        \hline
         &  & \\
        \hline
         &  & \\
        \hline
    \end{tabular}
    
    \end{tabular}
    \]
\end{itemize}

    \item \textbf{$U_f(x_1, x_2, x_3, 0)$ is identical to \eqref{As}.}

    If $U_f(x_1, x_2, 0, x_4)$ is isomorphic to \eqref{inde}, then it is isomorphic to one of the cases we have analyzed above by swapping the row and column indices of the blocks. So it remains to consider the another case.
    
    Since $U_f(x_1, x_2, 0, x_4)$ is isomorphic to \eqref{As}, it is identical to one of the functions among (v) to (x) we listed above. We start with the following partial function, where $E$ are the entries to be determined.
\[
    \begin{tabular}{c}
    
    \begin{tabular}{|c|c|c|}
        \hline
        $X$ & $A$ & $A$\\
        \hline
        $A$ & $B$ & $C$\\
        \hline
        $A$ & $C$ & $B$\\
        \hline
    \end{tabular} \ 
    \begin{tabular}{|c|c|c|}
        \hline
        $A$ & $B$ & $C$\\
        \hline
        $B$ & $C$ & $A$\\
        \hline
        $C$ & $A$ & $B$\\
        \hline
    \end{tabular} \ 
    \begin{tabular}{|c|c|c|}
        \hline
        $A$ & $C$ & $B$\\
        \hline
        $C$ & $B$ & $A$\\
        \hline
        $B$ & $A$ & $C$\\
        \hline
    \end{tabular}\\
    \\
    \begin{tabular}{|c|c|c|}
        \hline
        $A$ & $B$ & $C$\\
        \hline
         &  & \\
        \hline
         &  & \\
        \hline
    \end{tabular} \ 
    \begin{tabular}{|c|c|c|}
        \hline
        ${\transparent{0} E}$ & ${\transparent{0} E}$ & ${\transparent{0} E}$\\
        \hline
         &  & \\
        \hline
         &  & \\
        \hline
    \end{tabular} \ 
    \begin{tabular}{|c|c|c|}
        \hline
        ${\transparent{0} E}$ & ${\transparent{0} E}$ & ${\transparent{0} E}$\\
        \hline
         &  & \\
        \hline
         &  & \\
        \hline
    \end{tabular}\\
    \\
    \begin{tabular}{|c|c|c|}
        \hline
        $A$ & $C$ & $B$\\
        \hline
         &  & \\
        \hline
         &  & \\
        \hline
    \end{tabular} \ 
    \begin{tabular}{|c|c|c|}
        \hline
        ${\transparent{0} E}$ & ${\transparent{0} E}$ & ${\transparent{0} E}$\\
        \hline
         &  & \\
        \hline
         &  & \\
        \hline
    \end{tabular} \ 
    \begin{tabular}{|c|c|c|}
        \hline
        ${\transparent{0} E}$ & ${\transparent{0} E}$ & ${\transparent{0} E}$\\
        \hline
         &  & \\
        \hline
         &  & \\
        \hline
    \end{tabular}
    
    \end{tabular}
\]

Recall that the stabilizing actions for \eqref{As} includes (a) swapping the row and column coordinates, and (b) swapping 2nd and 3rd rows, columns and then blocks. These actions together with the action of swapping the 2nd and 3rd blocks partition cases (v) to (x) into four orbits: $\{\text{(v)}\}$, $\{\text{(vi)}, \text{(vii)}\}$, $\{\text{(viii)}\}$, $\{\text{(ix), \text{(x)}}\}$. Thus it suffices to analyze case (v), (vi), (viii) and (ix) respectively.

\begin{itemize}
    \item Case (v). The three $B$ in the gray cells certify the induced degree is at least 2 immediately.
    \[
    \begin{tabular}{c}
    
    \begin{tabular}{|c|c|c|}
        \hline
        $X$ & $A$ & $A$\\
        \hline
        $A$ & $B$ & $C$\\
        \hline
        $A$ & $C$ & \cellcolor{lightgray}$B$\\
        \hline
    \end{tabular} \ 
    \begin{tabular}{|c|c|c|}
        \hline
        $A$ & $B$ & $C$\\
        \hline
        $B$ & $C$ & $A$\\
        \hline
        $C$ & $A$ & \cellcolor{lightgray}$B$\\
        \hline
    \end{tabular} \ 
    \begin{tabular}{|c|c|c|}
        \hline
        $A$ & $C$ & $B$\\
        \hline
        $C$ & $B$ & $A$\\
        \hline
        $B$ & $A$ & $C$\\
        \hline
    \end{tabular}\\
    \\
    \begin{tabular}{|c|c|c|}
        \hline
        $A$ & $B$ & $C$\\
        \hline
        $B$ & $C$ & $A$\\
        \hline
        $C$ & $A$ & \cellcolor{lightgray}$B$\\
        \hline
    \end{tabular} \ 
    \begin{tabular}{|c|c|c|}
        \hline
        ${\transparent{0} E}$ & ${\transparent{0} E}$ & ${\transparent{0} E}$\\
        \hline
         &  & \\
        \hline
         &  & \\
        \hline
    \end{tabular} \ 
    \begin{tabular}{|c|c|c|}
        \hline
        ${\transparent{0} E}$ & ${\transparent{0} E}$ & ${\transparent{0} E}$\\
        \hline
         &  & \\
        \hline
         &  & \\
        \hline
    \end{tabular}\\
    \\
    \begin{tabular}{|c|c|c|}
        \hline
        $A$ & $C$ & $B$\\
        \hline
        $C$ & $B$ & $A$\\
        \hline
        $B$ & $A$ & $C$\\
        \hline
    \end{tabular} \ 
    \begin{tabular}{|c|c|c|}
        \hline
        ${\transparent{0} E}$ & ${\transparent{0} E}$ & ${\transparent{0} E}$\\
        \hline
         &  & \\
        \hline
         &  & \\
        \hline
    \end{tabular} \ 
    \begin{tabular}{|c|c|c|}
        \hline
        ${\transparent{0} E}$ & ${\transparent{0} E}$ & ${\transparent{0} E}$\\
        \hline
         &  & \\
        \hline
         &  & \\
        \hline
    \end{tabular}
    
    \end{tabular}
\]

    \item Case (vi).
    \[
    \begin{tabular}{c}
    
    \begin{tabular}{|c|c|c|}
        \hline
        $X$ & $A$ & $A$\\
        \hline
        $A$ & \cellcolor{lightgray}$B$ & $C$\\
        \hline
        $A$ & $C$ & $B$\\
        \hline
    \end{tabular} \ 
    \begin{tabular}{|c|c|c|}
        \hline
        $A$ & $B_{\transparent{0} 0}$ & $C$\\
        \hline
        $B$ & $C_{\transparent{0} 0}$ & $A$\\
        \hline
        $C$ & $A_{\transparent{0} 0}$ & $B$\\
        \hline
    \end{tabular} \ 
    \begin{tabular}{|c|c|c|}
        \hline
        $A$ & $C_{\transparent{0} 0}$ & $B$\\
        \hline
        $C$ & \cellcolor{lightgray}$B_{\transparent{0} 0}$ & $A$\\
        \hline
        $B$ & $A_{\transparent{0} 0}$ & $C$\\
        \hline
    \end{tabular}\\
    \\
    \begin{tabular}{|c|c|c|}
        \hline
        $A$ & $B$ & $C$\\
        \hline
        $B$ & $C$ & $B$\\
        \hline
        $C$ & $A$ & $A$\\
        \hline
    \end{tabular} \ 
    \begin{tabular}{|c|c|c|}
        \hline
        ${\transparent{0} E}$ & ${\transparent{0} E_0}$ & ${\transparent{0} E}$\\
        \hline
         &  & \\
        \hline
         &  & \\
        \hline
    \end{tabular} \ 
    \begin{tabular}{|c|c|c|}
        \hline
        ${\transparent{0} E}$ & ${\transparent{0} E_0}$ & ${\transparent{0} E}$\\
        \hline
         &  & \\
        \hline
         &  & \\
        \hline
    \end{tabular}\\
    \\
    \begin{tabular}{|c|c|c|}
        \hline
        $A$ & $C$ & $B$\\
        \hline
        $C$ & $A$ & $A$\\
        \hline
        $B$ & $B$ & $C$\\
        \hline
    \end{tabular} \ 
    \begin{tabular}{|c|c|c|}
        \hline
        ${\transparent{0} E}$ & ${\transparent{0} E}$ & ${\transparent{0} E}$\\
        \hline
         &  & \\
        \hline
         & $C_1$ & \\
        \hline
    \end{tabular} \ 
    \begin{tabular}{|c|c|c|}
        \hline
        ${\transparent{0} E}$ & ${\transparent{0} E}$ & ${\transparent{0} E}$\\
        \hline
         & \cellcolor{lightgray}$B_3$ & \\
        \hline
         & $C_2$ & \\
        \hline
    \end{tabular}
    
    \end{tabular}
\]

    \item Case (viii).
    \[
    \begin{tabular}{c}
    
    \begin{tabular}{|c|c|c|}
        \hline
        $X$ & $A$ & $A$\\
        \hline
        $A$ & $B$ & $C$\\
        \hline
        $A$ & $C$ & $B$\\
        \hline
    \end{tabular} \ 
    \begin{tabular}{|c|c|c|}
        \hline
        $A$ & $B_{\transparent{0} 0}$ & $C_{\transparent{0} 0}$\\
        \hline
        $B$ & $C_{\transparent{0} 0}$ & $A_{\transparent{0} 0}$\\
        \hline
        $C$ & $A_{\transparent{0} 0}$ & $B_{\transparent{0} 0}$\\
        \hline
    \end{tabular} \ 
    \begin{tabular}{|c|c|c|}
        \hline
        $A$ & $C_{\transparent{0} 0}$ & $B_{\transparent{0} 0}$\\
        \hline
        $C$ & $B_{\transparent{0} 0}$ & $A_{\transparent{0} 0}$\\
        \hline
        $B$ & $A_{\transparent{0} 0}$ & \cellcolor{lightgray}$C_{\transparent{0} 0}$\\
        \hline
    \end{tabular}\\
    \\
    \begin{tabular}{|c|c|c|}
        \hline
        $A$ & $B$ & $C$\\
        \hline
        $C$ & $A$ & $B$\\
        \hline
        $B$ & $C$ & $A$\\
        \hline
    \end{tabular} \ 
    \begin{tabular}{|c|c|c|}
        \hline
        ${\transparent{0} E}$ & ${\transparent{0} E_0}$ & ${\transparent{0} E}$\\
        \hline
         &  & \\
        \hline
         & $B_2$ & \cellcolor{lightgray}$C_4$\\
        \hline
    \end{tabular} \ 
    \begin{tabular}{|c|c|c|}
        \hline
        ${\transparent{0} E}$ & ${\transparent{0} E}$ & ${\transparent{0} E}$\\
        \hline
         &  & \\
        \hline
         & $B_1$ & \cellcolor{lightgray}$C_3$\\
        \hline
    \end{tabular}\\
    \\
    \begin{tabular}{|c|c|c|}
        \hline
        $A$ & $C$ & $B$\\
        \hline
        $B$ & $A$ & $C$\\
        \hline
        $C$ & $B$ & $A$\\
        \hline
    \end{tabular} \ 
    \begin{tabular}{|c|c|c|}
        \hline
        ${\transparent{0} E}$ & ${\transparent{0} E_0}$ & ${\transparent{0} E_0}$\\
        \hline
         &  & \\
        \hline
         &  & \\
        \hline
    \end{tabular} \ 
    \begin{tabular}{|c|c|c|}
        \hline
        ${\transparent{0} E}$ & ${\transparent{0} E_0}$ & ${\transparent{0} E_0}$\\
        \hline
         &  & \\
        \hline
         &  & \\
        \hline
    \end{tabular}
    
    \end{tabular}
    \]

    \item Case (ix).
    \[
    \begin{tabular}{c}
    
    \begin{tabular}{|c|c|c|}
        \hline
        $X$ & $A$ & $A$\\
        \hline
        $A$ & $B$ & $C$\\
        \hline
        $A$ & $C$ & $B$\\
        \hline
    \end{tabular} \ 
    \begin{tabular}{|c|c|c|}
        \hline
        $A$ & $B_{\transparent{0} 0}$ & $C_{\transparent{0} 0}$\\
        \hline
        $B$ & $C_{\transparent{0} 0}$ & $A_{\transparent{0} 0}$\\
        \hline
        $C$ & $A_{\transparent{0} 0}$ & $B_{\transparent{0} 0}$\\
        \hline
    \end{tabular} \ 
    \begin{tabular}{|c|c|c|}
        \hline
        $A_{\transparent{0} 0}$ & $C_{\transparent{0} 0}$ & $B_{\transparent{0} 0}$\\
        \hline
        $C_{\transparent{0} 0}$ & $B_{\transparent{0} 0}$ & $A_{\transparent{0} 0}$\\
        \hline
        $B_{\transparent{0} 0}$ & $A_{\transparent{0} 0}$ & $C_{\transparent{0} 0}$\\
        \hline
    \end{tabular}\\
    \\
    \begin{tabular}{|c|c|c|}
        \hline
        $A$ & $B$ & $C$\\
        \hline
        $C$ & $C$ & $B$\\
        \hline
        $B$ & $A$ & $A$\\
        \hline
    \end{tabular} \ 
    \begin{tabular}{|c|c|c|}
        \hline
        ${\transparent{0} E}$ & ${\transparent{0} E_0}$ & ${\transparent{0} E_0}$\\
        \hline
         &  & \\
        \hline
         &  & \\
        \hline
    \end{tabular} \ 
    \begin{tabular}{|c|c|c|}
        \hline
        ${\transparent{0} E_0}$ & ${\transparent{0} E_0}$ & ${\transparent{0} E_0}$\\
        \hline
         &  & \\
        \hline
         &  & \\
        \hline
    \end{tabular}\\
    \\
    \begin{tabular}{|c|c|c|}
        \hline
        $A$ & $C$ & $B$\\
        \hline
        $B$ & $A$ & $A$\\
        \hline
        $C$ & $B$ & $C$\\
        \hline
    \end{tabular} \ 
    \begin{tabular}{|c|c|c|}
        \hline
        ${\transparent{0} E}$ &  & \\
        \hline
         & $B_3$ & \\
        \hline
         & $C_4$ & \cellcolor{lightgray}$A_1$ \\
        \hline
    \end{tabular} \ 
    \begin{tabular}{|c|c|c|}
        \hline
         &  & \\
        \hline
         & $C_2$ & \\
        \hline
        \cellcolor{lightgray}$A_6$ & $B_5$ & \cellcolor{lightgray}$A_7$\\
        \hline
    \end{tabular}
    
    \end{tabular}
    \]
\end{itemize}
\end{itemize}

In all cases $U_f$ must have induced degree at least 2 and this completes the proof.
\end{proof}
\end{document}